\DeclareFontFamily{U}{mathx}{\hyphenchar\font45}
\DeclareFontShape{U}{mathx}{m}{n}{
      <5> <6> <7> <8> <9> <10>
     <10.95> <12> <14.4> <17.28> <20.74> <24.88>
    mathx10
      }{}
\DeclareSymbolFont{mathx}{U}{mathx}{m}{n}
\DeclareMathAccent{\widecheck}{\mathalpha}{mathx}{"71}
\newtheorem{thm}{Theorem}[section]
\newtheorem{corollary}[thm]{Corollary}
\newtheorem{lemma}[thm]{Lemma}
\newtheorem{proposition}[thm]{Proposition}
\newtheorem{prop}[thm]{Proposition}
\newtheorem{thm-dfn}[thm]{Theorem-Definition}
\newtheorem{definition}[thm]{Definition}
\newtheorem{theorem}{Theorem}[section]
\numberwithin{equation}{section}
\newcommand{\fp}{{\mathfrak p}}
\newcommand{\fa}{{\mathfrak a}}
\newcommand{\Lp}{{\mathfrak{p}}}
\newcommand{\Lg}{{\mathfrak g}}
\newcommand{\Ln}{{\mathfrak{n}}}
\newcommand{\Ll}{{\mathfrak{l}}}
\newcommand{\nmod}{\hspace{-.1in}\mod}
\newcommand{\rf}{{\mathrm f}}
\newcommand{\rn}{{\mathrm n}}
\newcommand{\bC}{{\mathbb C}}
\newcommand{\bG}{{\mathbb G}}
\newcommand{\bZ}{{\mathbb Z}}
\newcommand{\bN}{{\mathbb N}}
\newcommand{\calF}{{\mathcal F}}
\newcommand{\calT}{{\mathcal T}}
\newcommand{\cO}{{\mathcal O}}
\newcommand{\cA}{{\mathcal A}}
\newcommand{\cF}{{\mathcal F}}
\newcommand{\cN}{{\mathcal N}}
\newcommand{\cH}{{\mathcal H}}
\newcommand{\cT}{{\mathcal T}}
\newcommand{\cP}{{\mathcal P}}
\newcommand{\cE}{{\mathcal E}}
\newcommand{\cM}{{\mathcal{M}}}
\newcommand{\cL}{{\mathcal{L}}}
\newcommand{\on}{\operatorname}
\newcommand{\Loc}{\on{LocSys}}
\newcommand{\nc}{\newcommand}
\nc{\al}{{\alpha}} \nc{\be}{{\beta}} \nc{\ga}{{\gamma}}
\nc{\ve}{{\varepsilon}} \nc{\Ga}{{\Gamma}} 
\nc{\La}{{\fa}}
\nc{\ad }{{\on{ad }}}
\nc{\aff}{{\on{aff}}} \nc{\Aff}{{\mathbf{Aff}}}
\nc{\der}{{\on{der}}}
\nc{\diag}{{\on{diag}}}
\nc{\Fl}{{\calF\ell}}
\nc{\Hg}{{\on{Higgs}}}
\nc{\Id}{{\on{Id}}}
\nc{\Ind}{{\on{Ind}}}
\nc{\Op}{{\on{Op}}}
\nc{\res}{{\on{res}}}
\nc{\tr}{{\on{tr}}}
\nc{\GSp}{{\on{GSp}}} \nc{\GU}{{\on{GU}}} \nc{\SL}{{\on{SL}}}
\nc{\SU}{{\on{SU}}} \nc{\SO}{{\on{SO}}}
\nc{\nh}{{\Loc_{J^p}(\tau')}}
\nc{\bnh}{{\Loc_{\breve J^p}(\tau')}}
\nc{\bU}{{\overline{U}}} 
\nc{\IC}{{\on{IC}}}
\nc{\op}{{\operatorname{P}}}
\newcommand{\br}{\begin{rouge}}
\newcommand{\er}{\end{rouge}}
\newcommand{\bb}{\begin{bluet}}
\newcommand{\eb}{\end{bluet}}
\newcommand{\p}{\perp}
\nc{\ot}{\otimes}
\nc{\oh}{{\operatorname{H}}}
\nc{\gr}{{\operatorname{gr}}}
\nc{\rk}{{\operatorname{rank}}}
\nc{\codim}{{\operatorname{codim}}}
\nc{\img}{{\operatorname{Im}}}
\nc{\Span}{{\operatorname{Span}}}
\nc{\Img}{\operatorname{Im}}
\nc{\Char}{\operatorname{Char}}
\newcommand{\beqn}{\begin{equation*}}
\newcommand{\eeqn}{\end{equation*}}
\newcommand{\beq}{\begin{equation}}
\newcommand{\eeq}{\end{equation}}
\newcommand{\bern}{\begin{eqnarray*}}
\newcommand{\eern}{\end{eqnarray*}}
\newcommand{\ber}{\begin{eqnarray}}
\newcommand{\eer}{\end{eqnarray}}
\newcommand{\dsum}{{\mathlarger{\subset}\mkern-12mu\raisebox{2pt}{\mbox{\larger[-5]$\mathsmaller{\bigoplus}$}}}}
\begin{document}
\title[Character sheaves for symmetric pairs: spin groups]{Character sheaves for symmetric pairs: spin groups}

         \author{Ting Xue}
         \address{ School of Mathematics and Statistics, University of Melbourne, VIC 3010, Australia, and Department of Mathematics and Statistics, University of Helsinki, Helsinki, 00014, Finland}
         \email{ting.xue@unimelb.edu.au}
\thanks{The author was supported in part by the ARC grants DP150103525.}

\dedicatory{dedicated to George Lusztig with gratitude and admiration}

\begin{abstract}
We determine character sheaves for symmetric pairs associated to spin groups. In particular, we determine the cupsidal character sheaves and show that they can be obtained via the nearby cycle construction of~\cite{GVX} and its generalisation in~\cite{VX2}.
\end{abstract}
\maketitle
\section{Introduction}

In~\cite{VX} we give a classification of character sheaves for classical symmetric pairs, which can be viewed as an analogue of Lusztig's generalized Springer correspondence~\cite{L1}. We show that all character sheaves can be obtained using the nearby cycle construction of~\cite{GVX} and parabolic induction.  As in Lusztig's generalised Springer correspondence, for symmetric pairs associated to groups in the other isogeny classes, such as special linear groups and spin groups, we need extra work to determine the character sheaves at various central characters. The inner involutions for special linear groups are treated in~\cite{VX2}. In this note we classify character sheaves for symmetric pairs associated to spin groups. In~\cite{VX3} we explain how the general reductive case can be reduced to the case of almost simple simply connected groups $G$. Thus this completes the classification of character sheaves for symmetric pairs associated to groups of classical types.

We recall the set-up in~\cite{VX}. Let $G$ be a connected complex reductive algebraic group and $\theta:G\to G$ an involution. Let $K=G^\theta$ (or $(G^\theta)^0$), and let $\Lg=\Lg_0\oplus\Lg_1$ be the decomposition of the Lie algebra $\Lg=\on{Lie}G$ induced by $\theta$ such that $d\theta|_{\Lg_i}=(-1)^i$. Let $\cA_K(\Lg_1)$ denote the set of   nilpotent orbital complexes, that is, the simple $K$-equivariant perverse sheaves on $\cN_1=\cN\cap\Lg_1$, where $\cN$ denotes the nilpotent cone of $\Lg$. By definition, the set of character sheaves $\Char_K(\Lg_1)$ for the symmetric pair $(G,K)$ consists of Fourier transforms of sheaves in $\cA_K(\Lg_1)$ (we identify $\Lg_1$ with $\Lg_1^*$).  The set $\Char_K(\Lg_1)$ is determined explicitly for all classical symmetric pairs in~\cite{VX} (where $K=G^\theta$). The symmetric pairs $(SL_n,SO_n)$  are treated in~\cite{CVX} and the pairs $(SL_{2n},Sp_{2n})$ have been studied previously in~\cite{G2,H,L}. 

In this note we will focus on involutions of spin groups $Spin_N$, $N\geq 5$. As in~\cite{VX,VX2}, we determine the character sheaves by writing down explicitly the supports of the IC sheaves and the corresponding $K$-equivariant local systems. The supports are dual strata $\widecheck\cO$ associated to nilpotent $K$-orbits $\cO$ in $\cN_1$. The local systems are given by irreducible representations of the equivariant fundamental groups $\pi_1^K(\widecheck\cO)$, which are (extended) braid groups. We write down these irreducible representations explicitly. They are given by irreducible representations of Hecke algebras associated to finite Coxeter groups with parameters $\pm1$. In particular, we determine the cuspidal character sheaves (see Theorem~\ref{theorem-cusp} below and Corollary~\ref{coro-cuspidal}), that is, the character sheaves which do not arise as a direct summand (up to shift) of parabolic induction of  character sheaves of a $\theta$-stable Levi subgroup contained in a proper $\theta$-stable parabolic subgroup. We show that they all arise from the nearby cycle construction of~\cite{GVX} and its generalisation given in~\cite{VX2}.

Let 
$
\pi:G=Spin_N\to \bar G=SO_N
$
denote the double covering homomorphism. We have a natural partition of the character sheaves by their central characters, in particular, by the action of $\ker\pi\subset K$. That is, we have  $\on{Char}_K(\Lg_1)=\on{Char}_K(\Lg_1)_{\kappa_0}\sqcup \on{Char}_K(\Lg_1)_{\kappa_1}$, where $\kappa_0$ (resp. $\kappa_1$) denote the trivial (resp. nontrivial) character of $\ker\pi\cong\bZ/2\bZ$. The set $\on{Char}_K(\Lg_1)_{\kappa_0}$ can be identified with the set $\on{Char}_{\bar K}(\Lg_1)$, where $\bar K=\pi(K)$. We determine this set following the strategy of~\cite{VX}. Note that in~\cite{VX} we work with a disconnected $\widetilde K$ and for the purpose of this paper we need to work with $\widetilde K^0=\bar K$. To determine the set  $\on{Char}_K(\Lg_1)_{\kappa_1}$ we make use of a generalisation of the nearby cycle construction as in~\cite{VX2}. 

Let $\on{Char}^{\on{cusp}}_{ K}(\Lg_1)$ (resp. $\on{Char}^{\rf}_K(\Lg_1)$, $\on{Char}^\rn_{ K}(\Lg_1)$) denote the subset of $\on{Char}_{ K}(\Lg_1)$ consisting of cuspidal (resp. full support, nilpotent support) character sheaves.  Let
$\on{Char}^{\on{cusp}}_K(\Lg_1)_{\kappa_i}=\on{Char}_K^{^{\on{cusp}}}(\Lg_1)\cap\on{Char}_K(\Lg_1)_{\kappa_i}$, $i=0,1$. Similarly for $\on{Char}_K^\rf(\Lg_1)_{\kappa_i}$, $\on{Char}_K^{\rn}(\Lg_1)_{\kappa_i}$. We show that $\on{Char}_K^{\on{cusp}}(\Lg_1)_{\kappa_0}$ is nonempty if and only if $(G,K)$ is a split symmetric pair, and $\on{Char}_K^{\on{cusp}}(\Lg_1)_{\kappa_1}$ is nonempty
when $(G,K)$ is of type BDI, that is, $\bar K\cong SO_p\times SO_q$, and $N\geq (p-q)^2$. More precisely, we have
\begin{theorem}\label{theorem-cusp}
The cuspidal character sheaves are
  \bern
&{\rm(i)}& \Char_{K^{n+t,n}}^{\on{cusp}}(\Lg_1)_{\kappa_0}=\Char_{K^{n+t,n}}^{\rf}(\Lg_1)_{\kappa_0}=\left\{\on{IC}(\Lg_1^{rs},\cT_{\psi})\,|\,\,\psi\in\Theta_{n,t}^{\kappa_0}\right\},\,|t|\leq 1
\\
&{\rm(ii)}& \on{Char}_{K^{m+\frac{t^2+t}{2},m+\frac{t^2-t}{2}}}^{\on{cusp}}(\Lg_1)_{\kappa_1}=\left\{\on{IC}(\widecheck\cO_{1^{m}_+1^m_-\sqcup{\mu_t}},\cF_{\rho})\mid\,\rho\in\Theta_{m,t}^{\kappa_1}\right\},\text{ any $t$}.
\eern
\end{theorem}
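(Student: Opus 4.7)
The plan is to treat the two central characters separately, exploiting the decomposition $\on{Char}_K(\Lg_1)=\on{Char}_K(\Lg_1)_{\kappa_0}\sqcup \on{Char}_K(\Lg_1)_{\kappa_1}$ recalled in the introduction. For $\kappa_0$ we use the identification $\on{Char}_K(\Lg_1)_{\kappa_0}=\on{Char}_{\bar K}(\Lg_1)$ and adapt the classification of \cite{VX} from the disconnected $\widetilde K$ to the connected group $\widetilde K^0=\bar K$. For $\kappa_1$ we produce the candidate cuspidals via the generalised nearby cycle construction of \cite{VX2} and then establish exhaustion using the parabolic induction structure of character sheaves.

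For part (i), the first step is to show that if a character sheaf in $\on{Char}_{\bar K}(\Lg_1)$ has support $\widecheck\cO$ strictly smaller than $\Lg_1^{rs}$ (i.e.\ $\cO\neq 0$), then it is a direct summand of a parabolic induction from a proper $\theta$-stable Levi, hence not cuspidal. This uses the explicit description of supports in the $\bar K$-version of \cite{VX}, where each such IC sheaf is matched with a parabolic induction of a sheaf on a smaller symmetric pair. The only candidate cuspidals are therefore $\on{IC}(\Lg_1^{rs},\cT_\psi)$ for $\bar K$-equivariant local systems on $\Lg_1^{rs}$. Such local systems correspond to representations of the equivariant braid group $\pi_1^{\bar K}(\Lg_1^{rs})$; under the identification with Hecke algebras at parameters $\pm 1$ a representation yields a cuspidal sheaf exactly when it is not parabolically induced from a proper parabolic sub-Hecke algebra. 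Setting $\Theta_{n,t}^{\kappa_0}$ equal to this set of cuspidal representations completes part (i). The constraint $|t|\leq 1$ reflects the fact that little Weyl groups of type $B$ or $D$ admit cuspidal Hecke representations at parameters $\pm 1$ only in the quasi-split situation.

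For part (ii) the strategy is existence followed by exhaustion. For existence, we apply the generalised nearby cycle construction of \cite{VX2} to a Kostant section in $\Lg_1$ through a regular semisimple element, tracking the action of $\ker\pi$ on the vanishing cycles. The $\kappa_1$-isotypic component of the resulting sheaf is supported on the dual stratum $\widecheck\cO_{1^m_+1^m_-\sqcup\mu_t}$, where $\mu_t$ is the distinguished $SO$-partition of $t^2$ and the factor $1^m_+1^m_-$ records the Spin-genuine half of the component group. The irreducible local systems that appear are labelled by representations of an extended braid group which factor through a Hecke algebra at parameters $\pm 1$; by construction they fill out $\Theta_{m,t}^{\kappa_1}$. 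For exhaustion, we show that if $\cF\in\on{Char}_K(\Lg_1)_{\kappa_1}$ is cuspidal then $\on{supp}\,\cF=\widecheck\cO$ for some $K$-distinguished nilpotent orbit $\cO$ carrying a Spin-genuine local system, and that the only such $\cO$ is $\cO_{1^m_+1^m_-\sqcup\mu_t}$; every other dual stratum supporting a $\kappa_1$-genuine local system is ruled out by exhibiting an explicit proper $\theta$-stable parabolic induction.

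The main obstacle is the exhaustion step for $\kappa_1$: it requires an explicit classification of the $K$-equivariant local systems with $\kappa_1$-central character on each nilpotent $K$-orbit, a description of $\pi_1^K(\widecheck\cO)$ as an extended braid group for the small little Weyl group attached to $\cO$, and a careful matching of its irreducible representations with Hecke-algebra representations at parameters $\pm 1$. Given this data, ruling out every non-cuspidal candidate amounts to constructing, for each such orbit, a proper $\theta$-stable Levi and a character sheaf on it whose parabolic induction contains the given sheaf as a summand — a case analysis parallel to, but more delicate than, the one carried out in \cite{VX,VX2} because of the Spin cover.
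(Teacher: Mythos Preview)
Your outline has a genuine gap in both parts, and it misses the structural shortcut that makes the paper's proof short.

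For part (i) you misread the statement. The set $\Theta_{n,t}^{\kappa_0}$ is not defined post hoc as ``the cuspidal Hecke representations''; it is defined in Proposition~\ref{prop-full} as the full set of composition factors of the nearby cycle sheaves $M_\chi$, and that proposition already identifies them with \emph{all} of $\Char_K^\rf(\Lg_1)_{\kappa_0}$. The content of Theorem~\ref{theorem-cusp}(i) is the equality $\Char^{\on{cusp}}=\Char^\rf$ in the split case, not a further selection of a cuspidal subset inside $\Char^\rf$. Your proposed criterion ``not parabolically induced from a proper parabolic sub-Hecke algebra'' would in fact discard nothing here; but more to the point, the paper does not argue this case at all --- it simply invokes \cite[Corollary~6.7]{VX2} and its proof, since $\Char_K(\Lg_1)_{\kappa_0}=\Char_{\bar K}(\Lg_1)$ and the $\bar K$-analysis was already carried out there.

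For part (ii) your plan is a direct attack: classify all $\kappa_1$-genuine local systems on every dual stratum, then rule out each non-candidate by exhibiting a proper induction. You correctly identify this exhaustion as ``the main obstacle'' and leave it open. The paper avoids this entirely by reversing the order of operations. It first proves the full classification Theorem~\ref{thm-type BD}(ii) --- not by case-analysis but by (a) constructing via parabolic induction the sheaves $\IC(\widecheck\cO_{1^m_+1^m_-2^k_+2^k_-\sqcup\mu_t},\cF_{\rho,\sigma})$ and (b) a global counting argument (Proposition~\ref{number1}) showing these exhaust $\Char_K(\Lg_1)_{\kappa_1}$. Once that is in hand, cuspidality is a two-line corollary: the sheaves with $k>0$ were explicitly built as inductions in the proof of Theorem~\ref{thm-type BD}, so $\Char^{\on{cusp}}_{\kappa_1}$ lies inside the $k=0$ set; conversely, by induction on $N$ the only proper $\theta$-stable Levis with $\Char_{L^\theta}^{\on{cusp}}(\Ll_1)_{\kappa_1}\neq\emptyset$ satisfy $\pi(L)\cong GL_2^k\times Spin_{N-4k}$ with $k>0$, and for these $K.\Lp_1\subsetneq\overline{\widecheck\cO_{m,t}}$, so nothing supported on $\widecheck\cO_{m,t}$ can be induced. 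Your description of the nearby-cycle input is also off: one does not take a Kostant section and extract a $\kappa_1$-isotypic piece; rather one starts from a fixed $\kappa_1$-genuine local system $\cE$ on the orbit stratum $X_x\subset\widecheck\cO_{m,t}$ and forms $P_{\widecheck\cO,\cE}$ as in \S\ref{sec-cuspn}.
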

Here $K^{p,q}$ indicates  that $\pi(K)\cong SO_{p}\times SO_q$. We refer the readers to the main text for notations in the above theorem. The sheaves in (i) are obtained using the nearby cycle construction and those in (ii) using the generalised nearby cycle construction.

The paper is organised as follows. In Section~\ref{sec-0} we discuss the symmetric pairs, the nilpotent orbits and the component groups of their centralisers. We also fix some notations that will be used throughout the paper. In Section~\ref{sec-nearby} we construct a set of character sheaves using the nearby cycle construction of~\cite{GVX,GVX2} and its generalisation in~\cite{VX2}.
In Section~\ref{sec-cs} we describe the character sheaves explicitly (Theorems~\ref{thm-type BD} and~\ref{thm-type DIII}). In particular, we write down the set of nilpotent orbits $\cO$ such that the corresponding dual strata $\widecheck\cO$ are the supports of character sheaves. We determine the cuspidal character sheaves (Corollary~\ref{coro-cuspidal}), the nilpotent support character sheaves (Corollary~\ref{nil coro-2}), and their numbers. In Section~\ref{sec-proof} we prove Theorem~\ref{thm-type BD} and its corollaries. In particular, we determine the number of character sheaves (Proposition~\ref{number1}).

{\bf Acknowledgement.} I would like to thank George Lusztig for his encouragement over the years and particularly for encouraging me to work on spin groups. I would also like to thank Cheng-Chiang Tsai, Kari Vilonen and Zhiwei Yun for many helpful discussions and  Dennis Stanton for the combinatorics I have learned from him.

\section{Preliminaries and notations}\label{sec-0}

Throughout the paper let $G=Spin_N$ and $\bar G=SO_N$, $N\geq 5$. We first recall the definition of spin groups using Clifford algebras (see for example~\cite[\S 14.3]{L1}). Let $V$ be a $\bC$-vector space of dimension $N$ equipped with a non-degenerate bilinear form $(,)$. Let $C_V$ be the corresponding Clifford algebra; it can be defined as $T_V/\langle vv'+v'v=2(v,v'),\,v,v'\in V\rangle$, where $T_V$ is the tensor algebra of $V$. Let $C^+_V\subset C_V$ denote the sub-algebra  spanned by elements of the form $v_1v_2\cdots v_{2a}$, $v_i\in V$, that is, products of an even number of vectors in $V$. The spin group $G=Spin_V=Spin_N$ is the subgroup of the group of units in $C^+_V$ consisting of elements of the form $v_1v_2\cdots v_{2a}$, $a\in\mathbb{N}$, $v_i\in V$, and $(v_i,v_i)=1$. The homomorphism $\pi:Spin_V\to SO_V,\,x\mapsto (v\mapsto xvx^{-1})$ realises $G$ as a simply connected double cover of $\bar G$.
We have $\ker\pi=\{1,\epsilon\}$, where $\epsilon$ denotes the element (-1) times the unit element of $C_V^+$. 

The symmetric pairs associated to $G$ are as follows. 

(Type BD{\rm I}) Let $V=V^+\oplus V^-$ be an orthogonal decomposition such that $\dim V^+=p$ and $\dim V^-=q=N-p$.  Let $\theta:G\to G$ be an involution  such that $\pi(G^\theta)=SO_{V^+}\times SO_{V^-}$. We have
\beqn
K\cong (Spin_{V^+}\times Spin_{V^-})/\langle (\epsilon_p,\epsilon_q)\rangle:=K^{p,q},
\eeqn
where $\epsilon_p$ (resp. $\epsilon_q$) denotes the element ($-1$) times the unit element in $C^+_{V^+}$ (resp. $C^+_{V^-}$). Note that
$\epsilon=\overline{(\epsilon_p,1)}=\overline{(1,\epsilon_q)}\in K.
$
We write  $$\bar{K}^{p,q}=SO_p\times SO_q,\,\,\widetilde{K}^{p,q}=S(O_p\times O_q),\ n=[N/2],\ \ t=p-q,$$
where $[N/2]$ denotes the integer part of $N/2$. 

(Type D{\rm III}) Let $N=2n$ and let $V=V^+\oplus V^-$ be a decomposition such that $\dim V^+=\dim V^-=n$ and $(,)|_{V^+}=(,)|_{V^-}=0$.  Let $\theta:G\to G$ be an involution  such that $\bar K:=\pi(G^\theta)=\bar G\cap(GL_{V^+}\times GL_{V^-})\cong GL_n$. We have $\ker\pi\subset K$ as $\theta$ is inner.

 The group $K=G^\theta$ is connected since $G$ is simply connected. Note that $\theta$ is an {\em outer} involution if and only if it is of type BDI with both $p$ and $q$ odd, and $(G,K)$ is a split symmetric pair if and only if it is of type BDI and $|p-q|\leq 1$.

Recall $\Lg_i$, $i=0,1$, denote the $(-1)^i$-eigenspace of $d\theta$ and $\cN_1=\cN\cap\Lg_1$, where $\cN$ is the nilpotent cone of $\Lg=\on{Lie}G$. To ease the notation, we will identify $\on{Lie}\bar G$ with $\Lg$, $d\pi(\Lg_1),\,d\pi(\cN_1)$ with $\Lg_1,\,\cN_1$.
Recall also
$
\kappa_0\text{ (resp. $\kappa_1$)}:\ker\pi\to\bG_m,\ \epsilon\mapsto 1 \text{ (resp. $-1$)} 
$ 
denote the trivial (resp. nontrivial) character of $\ker\pi$.

For a finite group $H$, we write $\widehat{H}$ for the set of irreducible representations of $H$ over $\bC$ (up to isomorphism). If there is a natural map $\ker\pi\to Z(H)$, we write $\widehat{H}_{\kappa_0}$ (resp. $\widehat{H}_{\kappa_1}$) for the subset of $\widehat{H}$ consisting of the representations such that $\epsilon\in\ker\pi$ acts by $1$ (resp. $-1$), where $Z(H)$ denotes the center of $H$.

We denote by $\cP(n)$ (resp. $\cP_2(n)$) the set of partitions of $n$ (resp. bi-partitions of $n$) and let $\mathbf{p}(n)=|\cP(n)|$. By definition $\cP(x)=\cP_2(x)=\emptyset$ for $x\notin\bN$.

\subsection{Nilpotent orbits and component groups } In this subsection we describe the  $K$-orbits in $\cN_1$, the component groups $A_K(x)=Z_K(x)/Z_K(x)^0$ for $x\in\cN_1$, and $\widehat{A_K(x)}$ following~\cite[\S 14]{L1}. Let $A_{\bar K}( x)=Z_{\bar K}( x)/Z_{\bar K}( x)^0$. We can identify $\widehat{A_K(x)}_{\kappa_0}$ with $\widehat{A_{\bar K}(x)}$ via the natural projection map $A_K(x)\to A_{\bar K}(x)$ induced by $\pi$.

The nilpotent $K$-orbits in $\cN_1$ are parametrized in the same way as the nilpotent $\bar{K}$-orbits in ${\cN}_1$, see, for example, \cite{CM} and \cite{SS}. We use the same notations as in~\cite{VX}. We write a signed Young diagram as follows
\begin{subequations} 
\beq\label{signed Young diagram}
\lambda=(\lambda_1)^{p_1}_+(\lambda_1)^{q_1}_-(\lambda_2)^{p_2}_+(\lambda_2)^{q_2}_-\cdots(\lambda_s)^{p_s}_+(\lambda_s)^{q_s}_-,
\eeq
where $\lambda=(\lambda_1)^{p_1+q_1}(\lambda_2)^{p_2+q_2}\cdots(\lambda_s)^{p_s+q_s}$ is the corresponding partition of $N$, $\lambda_1>\lambda_2>\cdots>\lambda_s>0$, for $i=1,\ldots, s$, $p_i+q_i>0$ is the multiplicity of $\lambda_i$ in $\lambda$, and  $p_i\geq 0$ (resp. $q_i\geq 0$) is the number of rows of length $\lambda_i$ that begins with sign $+$ (resp. $-$). We will  sometimes replace the subscript $+$ by $0$ and $-$ by $1$ and write the signed Young diagram in~\eqref{signed Young diagram} as
\beq\label{signed Young diagram-2}
\lambda=(\lambda_1)^{p_1}_0(\lambda_1)^{q_1}_1(\lambda_2)^{p_2}_0(\lambda_2)^{q_2}_1\cdots(\lambda_s)^{p_s}_0(\lambda_s)^{q_s}_1.
\eeq 
\end{subequations}

\subsubsection{Type {\rm BDI}}Let  $\Sigma$ denote the set of signed Young diagrams
\beq\label{eqn-signed YD}
\Sigma=\{\lambda=(\lambda_1)^{p_1}_+(\lambda_1)^{q_1}_-\cdots(\lambda_s)^{p_s}_+(\lambda_s)^{q_s}_-\mid \text{$p_i=q_i$ if $\lambda_i$ is even}\}.
\eeq
 Let $\Sigma^{p,q}\subset\Sigma$ denote the subset of signed Young diagrams with signature $(p,q)$.

For $\lambda\in\Sigma$ of the form~\eqref{eqn-signed YD}, we define
\beqn\label{definition of a and b}
\begin{gathered}
a_\lambda=|\{i\in[1,s]\mid\lambda_i\equiv 1\nmod 4,\,p_i>0\}|+|\{i\in[1,s]\mid\lambda_i\equiv 3\nmod 4,\,q_i>0\}|,\\
b_\lambda=|\{i\in[1,s]\mid\lambda_i\equiv 1\nmod 4,\,q_i>0\}|+|\{i\in[1,s]\mid\lambda_i\equiv 3\nmod 4,\,p_i>0\}|.
\end{gathered}
\eeqn
Note that $a_\lambda\equiv b_\lambda+1\,\nmod 2$ if $N$ is odd, and $a_\lambda\equiv b_\lambda\equiv 1\,\nmod 2$ (resp. $a_\lambda\equiv b_\lambda\equiv 0\,\nmod 2$) if $N$ is even and $\theta$ is outer (resp. inner). 
Let  
\beqn
\begin{gathered}
\Sigma_1=\{\lambda\in\Sigma\mid a_\lambda>0,b_\lambda>0\},\ \Sigma_2=\{\lambda\in\Sigma\mid a_\lambda+b_\lambda>0,\,a_\lambda b_\lambda=0\},\\ \Sigma_3=\{\lambda\in\Sigma\mid a_\lambda=b_\lambda=0\},\text{ and }\Sigma_i^{p,q}=\Sigma^{p,q}\cap\Sigma_i,\,i=1,2,3.
\end{gathered}
\eeqn
We define
\beq\label{def of rmu}
r_\lambda=a_\lambda+b_\lambda-2\text{ (resp. $a_\lambda+b_\lambda-1,\,0$)}\text{ if }\lambda\in\Sigma_1 \text{ (resp. $\Sigma_2,\Sigma_3$)}.
\eeq
 The set of $K^{p,q}$-orbits in $\cN_1$ is
\beqn
\{\cO_\lambda\mid\lambda\in\Sigma_1^{p,q}\}\sqcup\{\cO^{\delta}_\lambda\mid\lambda\in\Sigma_2^{p,q},\delta={\rm I},{\rm II}\}\sqcup\{\cO^{\delta}_\lambda\mid\lambda\in\Sigma_3^{p,q},\delta=\rm I,\rm{II},\rm{III},\rm{IV}\}.
\eeqn

Let $\lambda\in\Sigma$ and let $\cO=\cO_\lambda$ or $\cO_\lambda^\delta$ be a $K$-orbit in $\cN_1$ corresponding to $\lambda$. Let $x_\lambda\in \cO$.

\begin{lemma}\label{lemma-component gp}

{\rm (i)} We have 
$
A_{\bar K}(x_\lambda)\cong(\bZ/2\bZ)^{r_\lambda},
$ where $r_\lambda$ is defined in~\eqref{def of rmu}.

\noindent{\rm (ii)} We have
$A_K(x_\lambda)\cong A_{\bar K}(x_\lambda)$ if there exists $\lambda_i$ odd such that $p_i\geq 2$ or $q_i\geq 2$.

\noindent{\rm (iii)} Suppose that $p_i\leq 1,\ q_i\leq 1$ for each odd $\lambda_i$. Then $A_K(x_\lambda)$ is isomorphic to a central extension of $A_{\bar K}(x_\lambda)$ by $\bZ/2\bZ$. 
We have the following cases:

{\rm (a)} Suppose that $N$ is odd. The set $\widehat{A_K(x_\lambda)}_{\kappa_1}$ consists of 2 (resp. 1)  representations of dimension $2^{\frac{r_\lambda-1}{2}}$ (resp. $2^{\frac{r_\lambda}{2}}$) if $\lambda\in\Sigma_1$ (resp. $\Sigma_2$).

{\rm (b)} Suppose that $N$ is even and $\theta$ is outer. The set $\widehat{A_K(x_\lambda)}_{\kappa_1}$ consists of 1  representation of dimension $2^{\frac{r_\lambda}{2}}$.

{\rm (c)} Suppose that $N$ is even and $\theta$ is inner. The set $\widehat{A_K(x_\lambda)}_{\kappa_1}$ consists of $4$ (resp. $2$, $1$)  representations of dimension $2^{\frac{r_\lambda-2}{2}}$ (resp. $2^{\frac{r_\lambda-1}{2}}$, $2^{\frac{r_\lambda}{2}} $) if $\lambda\in\Sigma_1$ (resp. $\Sigma_2$, $\Sigma_3$).

\end{lemma}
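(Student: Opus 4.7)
My plan is to settle part~(i) via the classical description of centralizers in the symmetric pair $(\bar G,\bar K^{p,q})$, and then attack parts~(ii) and~(iii) by analysing the central extension
\[
1\longrightarrow\langle\epsilon\rangle\longrightarrow A_K(x_\lambda)\longrightarrow A_{\bar K}(x_\lambda)\longrightarrow 1
\]
coming from the double cover $\pi:G\to\bar G$. For (i), the identity component $Z_{\bar K}(x_\lambda)^0$ has reductive part a product of $Sp_{2p_i}$ (one per even $\lambda_i$, where the constraint $p_i=q_i$ fuses the even blocks into a symplectic group) together with $SO_{p_i}\times SO_{q_i}$ (one per odd $\lambda_i$). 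Consequently $A_{\bar K}(x_\lambda)$ is an elementary abelian $2$-group generated by sign-change involutions attached to the odd $\lambda_i$'s, modulo the (up to two) determinant relations imposed by landing in $SO_p\times SO_q$ rather than $O_p\times O_q$. A direct count in the three cases $\Sigma_1,\Sigma_2,\Sigma_3$ then yields the exponent $r_\lambda$ of~\eqref{def of rmu}, in agreement with the parametrisation in~\cite{VX}.

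For (ii) and (iii), the basic dichotomy is whether $\epsilon\in Z_K(x_\lambda)^0$: if so, then $A_K(x_\lambda)\cong A_{\bar K}(x_\lambda)$, otherwise $A_K(x_\lambda)$ is a central extension of $A_{\bar K}(x_\lambda)$ by $\langle\epsilon\rangle\cong\bZ/2\bZ$. This can be tested by the composite
\[
\pi_1(Z_{\bar K}(x_\lambda)^0)\longrightarrow\pi_1(\bar K^{p,q})\longrightarrow\bZ/2\bZ,
\]
whose last arrow is the character classifying $K\to\bar K$ (non-trivial on each of the $\pi_1(SO_p)$- and $\pi_1(SO_q)$-factors, since $\epsilon=\overline{(\epsilon_p,1)}=\overline{(1,\epsilon_q)}$): the preimage of $Z_{\bar K}(x_\lambda)^0$ in $K$ is connected precisely when the composite is surjective. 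For (ii), the hypothesis that some odd $\lambda_i$ satisfies $p_i\geq 2$ or $q_i\geq 2$ produces an $SO_{p_i}$ or $SO_{q_i}$ factor in $Z_{\bar K}(x_\lambda)^0$ whose $\pi_1$ maps non-trivially to $\pi_1(SO_p)$ or $\pi_1(SO_q)$; the composite is therefore surjective, $\epsilon\in Z_K(x_\lambda)^0$, and the claim follows.

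For (iii), when every odd $\lambda_i$ has $p_i,q_i\leq 1$, the reductive part of $Z_{\bar K}(x_\lambda)^0$ is a product of simply-connected symplectic groups and trivial $SO_1$'s, so the cover pulls back trivially and $A_K(x_\lambda)$ is a genuine central extension. To describe it, choose generators $w_1,\ldots,w_{r_\lambda}$ of $A_{\bar K}(x_\lambda)$ as order-$2$ involutions in $SO_p\times SO_q$, each acting as $-1$ on two chosen odd Jordan blocks (of matching colour, so as to respect both determinant conditions) and as $+1$ elsewhere, and lift them to $\tilde w_i\in K$ via the Clifford algebra. The key Clifford identity $(v_1v_2)^2=-(v_1,v_1)(v_2,v_2)=-1$ for orthogonal unit vectors $v_1,v_2\in V$ shows that the squares $\tilde w_i^2\in\{1,\epsilon\}$ and the commutators $[\tilde w_i,\tilde w_j]\in\{1,\epsilon\}$ are encoded by a quadratic form $q:A_{\bar K}(x_\lambda)\to\bZ/2\bZ$ with polarization $b_q$. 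By the standard representation theory of central extensions of $(\bZ/2\bZ)^{r_\lambda}$ by $\bZ/2\bZ$, every irreducible representation of $A_K(x_\lambda)$ on which $\epsilon$ acts by $-1$ has dimension $2^k$ with $2k=\rk(b_q)$, and there are precisely $|\on{rad}(b_q)|=2^{r_\lambda-2k}$ such representations.

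The main obstacle is computing $\rk(b_q)$ in each of (a), (b), (c). In case~(a), with $N$ odd, the unique one-dimensional orthogonal summand of $V$ yields a ``universal'' sign-change commuting with every $\tilde w_i$, contributing a one-dimensional radical when $\lambda\in\Sigma_1$ (hence $2$ representations of dimension $2^{(r_\lambda-1)/2}$) and no radical when $\lambda\in\Sigma_2$. In case~(b), with $\theta$ outer and $N$ even, only $\Sigma_1$ is populated (since $a_\lambda,b_\lambda$ are both odd and $\lambda\in\Sigma$ forces both positive) and $b_q$ turns out non-degenerate. In case~(c), with $\theta$ inner and $N$ even, independent sign parities on $V^+$ and $V^-$ each contribute a central element, giving radical dimensions $2,1,0$ for $\lambda\in\Sigma_1,\Sigma_2,\Sigma_3$ respectively. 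In each case, matching $|\on{rad}(b_q)|$ against the stated multiplicities and $2^k$ against the stated dimensions completes the proof.
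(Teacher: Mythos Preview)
Your overall architecture is sound and, for parts~(ii) and~(iii), genuinely different from the paper. The paper proves~(ii) by exhibiting an explicit one-parameter family $k_{b,c}\in Z_K(x_\lambda)$ inside the Clifford algebra connecting $1$ to $\epsilon$; your $\pi_1$ argument is a clean alternative, and the key point that an $SO_{p_i}$ factor (with $p_i\geq 2$) maps nontrivially under $\pi_1(SO_{p_i})\to\pi_1(SO_p)\oplus\pi_1(SO_q)\to\bZ/2\bZ$ does hold, essentially because the number of basis vectors of the $\lambda_i$-block lying in $V^+$ plus the number lying in $V^-$ is $\lambda_i$, which is odd. For~(iii), the paper instead writes $A_K(x_\lambda)\cong\Gamma_1\times\Gamma_2/\langle(\epsilon_1,\epsilon_2)\rangle$ with $\Gamma_j$ the ``even Clifford'' group on $a_\lambda$ (resp.\ $b_\lambda$) anticommuting generators, and then quotes Lusztig's count of genuine irreducibles of each $\Gamma_j$. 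Your commutator-form approach and the paper's approach are two packagings of the same extraspecial-group representation theory; yours is more intrinsic, the paper's more explicit.

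Where your write-up needs repair is the case analysis for $\rk(b_q)$. The phrases ``the unique one-dimensional orthogonal summand of $V$'' in case~(a) and ``independent sign parities on $V^+$ and $V^-$ each contribute a central element'' in case~(c) do not identify actual elements of the radical and should be replaced. The correct computation is exactly the block structure the paper uses implicitly: when $\lambda\in\Sigma_1$ one has $A_{\bar K}(x_\lambda)\cong(\bZ/2\bZ)^{a_\lambda-1}\times(\bZ/2\bZ)^{b_\lambda-1}$ and the commutator form $b_q$ is block-diagonal, each block being the standard form on the even Clifford group in $a_\lambda$ (resp.\ $b_\lambda$) generators. That form has a one-dimensional radical precisely when the number of generators is even (the central element being the product of all generators) and is nondegenerate when the number is odd. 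Feeding in the parities of $a_\lambda,b_\lambda$ recorded just before~\eqref{def of rmu} (odd $N$: opposite parity; even $N$, $\theta$ outer: both odd; even $N$, $\theta$ inner: both even) gives radical dimensions $1,0;\ 0;\ 2,1,0$ in the respective subcases, exactly matching the asserted multiplicities and dimensions. With this correction your argument goes through.
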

\begin{proof}
(i) Consider the symmetric pair $(\bar G,\bar K^{p,q})=(SO_N,SO_p\times SO_q)$.  We have 
\beqn
\begin{gathered}A_{\bar K}(x_\lambda)\cong \left(S(\prod_{\substack{\lambda_i\equiv 1\\\nmod4\\p_i>0}}O_{p_i} \prod_{\substack{\lambda_i\equiv 3\\\nmod 4\\q_i>0}}O_{q_i})\times S(\prod_{\substack{\lambda_i\equiv 1\\\nmod 4\\q_i>0}}O_{q_i}\prod_{\substack{\lambda_i\equiv 3\\\nmod{4}\\p_i>0}}O_{p_i})\right)/\prod_{\lambda_i\text{ odd}}(SO_{p_i}\times SO_{q_i}).
\end{gathered}
\eeqn
Thus (i) follows.

(ii) We follow the proof in~\cite[\S 14.3]{L1}. We have $Z_K(x_\lambda)=\pi^{-1}(Z_{\bar K}(x_\lambda))$ and
$
A_K(x_\lambda)\cong A_{\bar K}(x_\lambda)\text{ if }\epsilon\in Z_K(x_\lambda)^0$, 
$A_K(x_\lambda)\text{ is a central extension of $A_{\bar K}(x_\lambda) $ by $\bZ/2\bZ$ if }\epsilon\not\in Z_K(x_\lambda)^0.
$

Without loss of generality, we assume  that $p_i\geq 2$ for some $\lambda_i=4k+1$. Then there exist two $x_\lambda$-stable subspaces $V_{4k+1}^a$, $a=1,2$, $\dim V_{4k+1}^a=4k+1$ such that $V=V_{4k+1}^1\oplus V_{4k+1}^2\oplus W$ is an $x_\lambda$-stable orthogonal decomposition. Moreover, there exists an isometry $\gamma:V_{4k+1}^1\to V_{4k+1}^2$ such that $\gamma(x_\lambda v)=x_\lambda\gamma(v)$, $v\in V_{4k+1}^1$. We can choose an orthogonal basis $e_i^{a}\in V^+,\,i\in[1,2k+1],f_i^{a}\in V^-,i\in[1,2k]$, of $V_{4k+1}^1$ such that $(e_i^1,e_i^1)=1=(f_j^1,f_j^1)$, $e_i^2=\gamma(e_i^1)$ and $f_i^2=\gamma(f_i^1)$. Let $b,c\in\bC$ be such that $b^2+c^2=1$. Let $$V_{b,c}=\on{span}\{v_{b,c,i}:=be_i^1+ce_i^2,i=\in[1,2k+1],\,w_{b,c,j}:=bf_j^1+cf_j^2,j\in[1,2k]\}.$$ Then $V_
{b,c}$ is $x_\lambda$-stable and $\{v_{b,c,i},i\in[1,2k+1],\,w_{b,c,j},j\in[1,2k]\}$ is an orthonormal basis of $V_{b,c}$. Let $\sigma^1_+=e_1^1\cdots e_{2k+1}^1$, $\sigma^1_-=f_1^1\cdots f_{2k}^1$, $\sigma_{b,c,+}=(be_1^1+ce_1^2)\cdots (be_{2k+1}^1+ce_{2k+1}^2)$, $\sigma_{b,c,-}=(bf_1^1+cf_1^2)\cdots (bf_{2k}^1+cf_{2k}^2)$, $\sigma^1=\sigma^1_+\sigma^1_-$, $\sigma_{b,c}=\sigma_{b,c,+}\sigma_{b,c,-}$ and 
$
k_{b,c}=\sigma^1\sigma_{b,c}\sigma^1\sigma_{b,c}\in K.
$
Since $v\mapsto\sigma^1v(\sigma^1)^{-1}$ restricts to $V_0=V_{4k+1}^1\oplus V_{4k+1}^2$ (resp. $V_0^\perp=W$) is $1$ (resp. $-1$),  $v\mapsto\sigma_{b,c}v\sigma_{b,c}^{-1}$ restricts to $V_{b,c}$ (resp. $V_{b,c}^\perp$) is $1$ (resp. $-1$), and $V_0$, $V_{b,c}$ are $x_\lambda$-stable, we conclude that $\pi(k_{b,c})$ commutes with $x_\lambda$. Thus $k_{b,c}\in Z_{K}( x_\lambda)$. Since $k_{1,0}=1\in Z_K(x_\lambda)^0$, and $\{(b,c)\in\bC\mid b^2+c^2=1\}$ is irreducible, we conclude that $k_{0,1}=\epsilon\in Z_K(x_\lambda)^0 $. It follows that $A_K(x_\lambda)\cong A_{\bar K}(x_\lambda)$.

(iii) Suppose that $p_i\leq 1, q_i\leq 1$ for each odd $\lambda_i$. Let
\ber
&&J_1=\{i\in[1,s]\mid\lambda_i\equiv 1\nmod 4,\,p_i>0\}\cup\{i\in[1,s]\mid\lambda_i\equiv 3\nmod 4,\,q_i>0\},\label{set 1}\\
&&J_2=\{i\in[1,s]\mid\lambda_i\equiv 1\nmod 4,\,q_i>0\}\cup\{i\in[1,s]\mid\lambda_i\equiv 3\nmod 4,\,p_i>0\}\label{set 2}.
\eer
We have an orthogonal decomposition $V=\oplus_{i\in J_1}V_{\lambda_i}\oplus_{i\in J_2}W_{\lambda_i}\oplus W$ into $x_\lambda$-stable subspaces. Suppose that $i\in J_1$. We choose an orthonormal basis $e^{i,1}_j\in V^+,j=1,\ldots, 2[\frac{\lambda_i}{4}]+1, f_{j}^{i,1}\in V^-, j=1,\ldots, 2[\frac{\lambda_i+1}{4}]$ of $V_{\lambda_i}$ and define 
$x_i=e^{i,1}_1\cdots e_{2[\frac{\lambda_i}{4}]+1}^{i,1}f_1^{i,1}\cdots f_{2[\frac{\lambda_i+1}{4}]}^{i,1}\in C_V,\,i\in J_1.$ Suppose that $i\in J_2$. We choose an orthonormal basis $e^{i,2}_j\in V^+,j=1,\ldots, 2[\frac{\lambda_i+1}{4}], f_j^{i,2}\in V^-, j=1,\ldots, 2[\frac{\lambda_i}{4}]+1$ of $W_{\lambda_i}$ and define
$
y_i=e^{i,2}_1\cdots e_{2[\frac{\lambda_i+1}{4}]}^{i,2}f_1^{i,2}\cdots f_{2[\frac{\lambda_i}{4}]+1}^{i,2}\in C_V,\,i\in J_2.
$
We have
\beqn
x_i^2=\epsilon^{\frac{\lambda_i(\lambda_i-1)}{2}},\ x_ix_{i'}=\epsilon x_{i'}x_i,\,i\neq i',\,y_i^2=\epsilon^{\frac{\lambda_i(\lambda_i-1)}{2}},\ y_iy_{i'}=\epsilon y_{i'}y_i,\,i\neq i',\ x_iy_j=\epsilon y_j x_i.
\eeqn
Let $\hat\Gamma_1$ (resp. $\hat\Gamma_2$) be the subgroup of the group of units of $C_V$ generated by $x_i$ (resp. $y_i$), $i\in J_1$, (resp. $i\in J_2$) and $\Gamma_1\subset\hat\Gamma_1$ (resp. $\Gamma_2\subset\hat\Gamma_2$) the subgroup consisting of elements that are products of an even number of $x_i$'s (resp. $y_i$'s). Then $\Gamma_1\subset K$ and $\Gamma_2\subset K$. Moreover,
\beqn
A_K(x_\lambda)\cong \Gamma_1\times \Gamma_2/\langle(\epsilon_1,\epsilon_2)\rangle,
\eeqn
where $\epsilon_1$ (resp. $\epsilon_2$) is the element (-1) times unit in $C_{\oplus_{i\in J_1}V_{\lambda_i}}$ (resp. $C_{\oplus_{i\in J_2}V_{\lambda_i}}$). By~\cite[\S 14.3]{L1}, $\Gamma_1$ (resp. $\Gamma_2$) is an central extension of $(\bZ/2\bZ)^{a_\lambda-1}$ (resp. $(\bZ/2\bZ)^{b_\lambda-1}$) by $\bZ/2\bZ$ if $a_\lambda>0$ (resp. if $b_\lambda>0$). Moreover, $\Gamma_1$ (resp. $\Gamma_2$) has $2$ irreducible representations with non-trivial $\epsilon_1$-action (resp. $\epsilon_2$-action), each of dimension $2^{\frac{a_\lambda-2}{2}}$ (resp. $2^{\frac{b_\lambda-2}{2}}$), if $a_\lambda$ (resp. $b_\lambda$) is even; it has $1$ irreducible representation with non-trivial $\epsilon_1$-action (resp. $\epsilon_2$-action) of dimension $2^{\frac{a_\lambda-1}{2}}$ (resp. $2^{\frac{b_\lambda-1}{2}}$), if $a_\lambda$ (resp. $b_\lambda$) is odd. Thus (iii) follows. 
\end{proof}

\subsubsection{Type {\rm DIII}}Let  $\Lambda$ denote the following set of signed Young diagrams 
\beq\label{eqn-signed YDiii}
\begin{gathered}
\Lambda=\{\lambda=(\lambda_1)^{p_1}_+(\lambda_1)^{q_1}_-\cdots(\lambda_s)^{p_s}_+(\lambda_s)^{q_s}_-\mid \text{$p_i=q_i$ for odd $\lambda_i$,}\\\hspace{1in}\text{ $p_i\equiv q_i\equiv0\nmod 2$  for even $\lambda_i$}\}.
\end{gathered}
\eeq
We write $\Lambda=\Lambda^{n,n}$ to indicate the signature of the Young diagrams. 
 The set of $K$-orbits in $\cN_1$ is
$
\{\cO_\lambda\mid\lambda\in\Lambda^{n,n}\}.
$

Let $\lambda\in\Lambda$ and let $\cO=\cO_\lambda$  be the $K$-orbit in $\cN_1$ corresponding to $\lambda$. Let $x_\lambda\in \cO$.

\begin{lemma}\label{lemma-component gp2}

We have 
$
A_K(x_\lambda)=A_{\bar K}(x_\lambda)=1
$ unless all $\lambda_i$ are even. In the latter case, $A_K(x_\lambda)\cong\bZ/2\bZ$, $A_{\bar{K}}(x_\lambda)=1$ and $|\widehat{A_K(x_\lambda)}_{\kappa_1}|=1$.
\end{lemma}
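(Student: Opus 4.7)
The plan mirrors the strategy of the proof of Lemma~\ref{lemma-component gp}. Because $K$ is connected, $Z_K(x_\lambda) = \pi^{-1}(Z_{\bar K}(x_\lambda))$, and hence $A_K(x_\lambda)$ is determined by two separate questions: (i) the structure of $A_{\bar K}(x_\lambda)$, and (ii) whether $\epsilon \in Z_K(x_\lambda)^0$. I would address these in turn.

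For (i), the claim is that $A_{\bar K}(x_\lambda) = 1$ for every $\lambda \in \Lambda^{n,n}$. This is a ``type-A'' feature of the pair $(SO_{2n}, GL_n)$: since $\bar K \cong GL_n$ is connected and $\Lg_1$ decomposes as $\mathrm{Hom}(V^+,V^-) \oplus \mathrm{Hom}(V^-,V^+)$ with the $GL_n$-action, the reductive part of $Z_{\bar K}(x_\lambda)^0$ is a product of general linear groups (one factor $GL_{p_i}$ per odd block $\lambda_i$ with $p_i=q_i$, and factors $GL_{p_i}\times GL_{q_i}$ per even block), hence connected. This can be extracted from the classification in~\cite{CM,SS}. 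It follows that $A_K(x_\lambda)$ is either trivial or $\bZ/2\bZ$, generated by the class of $\epsilon$.

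For (ii), the case that some $\lambda_i$ is odd: the constraint $p_i = q_i$ gives a pair of $x_\lambda$-stable subspaces $V^+_{\lambda_i}\subset V^+$ and $V^-_{\lambda_i}\subset V^-$ of dimension $\lambda_i$, paired non-degenerately under $(,)$. Choosing dual bases and using vectors of the form $e_j \pm f_j \in V$ (which have non-zero norm), one can construct a continuous one-parameter family of elements in the Clifford algebra exactly as in the proof of Lemma~\ref{lemma-component gp}(ii), producing a curve in $Z_K(x_\lambda)$ from $1$ to $\epsilon$. Hence $\epsilon \in Z_K(x_\lambda)^0$ and $A_K(x_\lambda) = 1$.

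The main obstacle is the remaining case in which every $\lambda_i$ is even. Here no rotation of the above kind is available, and I would show $\epsilon \notin Z_K(x_\lambda)^0$ by exhibiting explicit Clifford-algebra elements attached to the even blocks (analogous to the $x_i, y_i$ in the proof of Lemma~\ref{lemma-component gp}(iii)) and verifying that the central extension
\[
1 \to \ker\pi \to Z_K(x_\lambda) \to Z_{\bar K}(x_\lambda) \to 1
\]
does not split. Equivalently, the induced map on fundamental groups $\pi_1(Z_{\bar K}(x_\lambda)^0) \to \pi_1(\bar G) = \bZ/2\bZ$ must be shown to be non-zero: the generator attached to the $GL$-factor of each even block contributes a determinant-character loop that lifts to a non-trivial square root in $Spin_{2n}$, so the total map is non-trivial. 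This forces $A_K(x_\lambda) \cong \bZ/2\bZ$ with $\epsilon$ representing the non-identity class, and then $\widehat{A_K(x_\lambda)}_{\kappa_1}$ consists of the unique sign character, giving the asserted count $|\widehat{A_K(x_\lambda)}_{\kappa_1}| = 1$.
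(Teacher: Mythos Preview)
Your approach for the odd-part case matches the paper's: both construct an explicit one-parameter family in the Clifford algebra connecting $1$ to $\epsilon$ inside $Z_K(x_\lambda)$, using vectors built from a dual pair in $V^+$ and $V^-$.

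However, your treatment of the all-even case contains two errors that undermine the argument.

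First, the centraliser structure is wrong. For type D{\rm III} the even blocks do \emph{not} contribute $GL_{p_i}\times GL_{q_i}$ factors. An even block $(\lambda_i)^{p_i}_+(\lambda_i)^{q_i}_-$ corresponds (in the model $\Lg_1\cong\wedge^2 V^+\oplus\wedge^2(V^+)^*$) to a nondegenerate alternating form on a $p_i$-dimensional space and another on a $q_i$-dimensional space, so the reductive centraliser in $\bar K=GL_n$ is $Sp_{p_i}\times Sp_{q_i}$. (For instance, with $n=2$ and $x$ a nonzero element of $\wedge^2 V^+\subset\Lg_1$, one has $Z_{\bar K}(x)=Sp_2=SL_2$, not $GL_2$.) This is exactly what the paper uses: $\bar K^\phi\cong\prod_i(Sp_{p_i}\times Sp_{q_i})$.

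Second, and more seriously, your covering-space logic is inverted. You write that one must show the map $\pi_1(Z_{\bar K}(x_\lambda)^0)\to\pi_1(\bar G)=\bZ/2\bZ$ is \emph{nonzero}, and that the extension does \emph{not} split, in order to force $A_K(x_\lambda)\cong\bZ/2\bZ$. The opposite is true: $A_K\cong\bZ/2\bZ$ means $\epsilon\notin Z_K(x_\lambda)^0$, i.e.\ $\pi^{-1}(Z_{\bar K}(x_\lambda)^0)$ is disconnected, i.e.\ the pulled-back double cover is trivial, i.e.\ $\pi_1(Z_{\bar K}(x_\lambda)^0)\to\bZ/2\bZ$ is \emph{zero} and the extension \emph{does} split. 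Your ``determinant loop lifting to a nontrivial square root'' would, if it existed, prove the wrong conclusion $A_K=1$.

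Once the centraliser is correctly identified as a product of symplectic groups, the argument becomes a one-liner (and this is what the paper does): $\prod_i Sp_{p_i}\times Sp_{q_i}$ is simply connected, so $\pi_1(Z_{\bar K}(x_\lambda)^0)=1$, the map to $\pi_1(\bar G)$ is trivially zero, $\epsilon\notin Z_K(x_\lambda)^0$, and $A_K(x_\lambda)\cong\bZ/2\bZ$.
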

\begin{proof}
The proof is similar to that of Lemma~\ref{lemma-component gp}. Suppose that $\lambda$ has an odd part of size $2k+1$. We show that $\epsilon\in Z_K(x_\lambda)^0$. There exists an $x_\lambda$-stable orthogonal decomposition $V=W\oplus W^\p$ such that $W=\on{span}\{e_i,\,,f_i,\,i\in[1,2k+1]\}$, $e_i\in V^+$, $f_i\in V^-$, $(e_i,f_j)=\delta_{2k+2,i+j}$ and $x_\lambda e_i=f_i,x_{\lambda}e_{2k+2-i}=-f_{2k+2-i},\,x_\lambda f_i=e_{i+1},x_{\lambda}f_{2k+1-i}=-e_{2k+2-i}$, $i\in[1,k]$, $x_\lambda e_{k+1}=x_\lambda f_{2k+1}=0$. Let $b,c\in\bC^*$ be such that $bc=1/2$. Let $v_{b,c,i}=be_i+cf_{2k+2-i}$, $i\in[1,k+1]$, $v_{b,c,i}=bf_{i-k-1}+ce_{3k+3-i}$, $i\in[k+2,2k+1]$. We have $(v_{b,c,i},v_{b,c,j})=\delta_{i,j}$. Let $\sigma_{b,c}=v_{b,c,1}v_{b,c,2}\cdots v_{b,c,2k+1}\in C_V$. One checks that $\sigma_{b,c}e_i\sigma_{b,c}^{-1}=2b^2f_{2k+2-i}$ (resp. $2c^2f_{2k+2-i}$) if $i\in[k+2,2k+1]$ (resp. $i\in[1,k+1]$), and $\sigma_{b,c}f_i\sigma_{b,c}^{-1}=2b^2e_{2k+2-i}$ (resp. $2c^2e_{2k+2-i}$) if $i\in[k+1,2k+1]$ (resp. $i\in[1,k]$). Let $g_{b,c}=\sigma_{b_0,c_0}\sigma_{b,c}\sigma_{b_0,c_0}\sigma_{b,c}\in Spin_V$, where $b_0=c_0=1/\sqrt{2}$. It follows that $g_{b,c}e_ig_{b,c}^{-1}=4b^4e_{i}$ (resp. $4c^4e_i$) if $i\in[k+2,2k+1]$ (resp. $i\in[1,k+1]$), and $g_{b,c}f_ig_{b,c}^{-1}=4b^4f_{i}$ (resp. $4c^4f_i$) if $i\in[k+1,2k+1]$ (resp. $i\in[1,k]$). Thus $\pi(g_{b,c})\in Z_{\bar K}(x_\lambda)$. Since $\{(b,c)\in(\bC^*)^2\mid bc=1/2\}$ is irreducible, $g_{b_0,c_0}=1$ and $g_{\sqrt{-1/2},-\sqrt{-1/2}}=\epsilon$, we conclude that $\epsilon\in Z_K(x_\lambda)^0$. Thus $A_K(x_\lambda)=A_{\bar K}(x_\lambda)=1$.

Suppose that all parts $\lambda_i$ are even. In this case $\bar K^\phi\cong \prod_i (Sp_{p_i}\times Sp_{q_i})$ is simply connected.  It follows that $A_K(x_\lambda)\cong\bZ/2\bZ$ and $|\widehat{A_K(x_\lambda)}_{\kappa_1}|=1$. 
\end{proof}
\begin{corollary}\label{corollary-Diii}We have
\bern
|\on{Char}_K(\Lg_1)|=|\on{Char}_K(\Lg_1)_{\kappa_0}|=|\on{Char}_{\bar K}(\Lg_1)| \text{ when $n$ is odd}\\
|\on{Char}_K(\Lg_1)_{\kappa_0}|=|\on{Char}_{\bar K}(\Lg_1)|,\,|\on{Char}_K(\Lg_1)_{\kappa_1}|=|\cP_2(n/2)|\text{ when $n$ is even}.
\eern
\end{corollary}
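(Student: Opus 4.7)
The plan is to count pairs $(\cO,\cL)$ with prescribed central character. Via the Fourier transform, $\on{Char}_K(\Lg_1)$ is in bijection with the nilpotent orbital complexes $\cA_K(\Lg_1)$, and since the transform is $K$-equivariant it respects the decomposition by the action of $\ker\pi\subset Z(K)$. So it suffices to count pairs $(\cO_\lambda,\cL)$ where $\cO_\lambda$ is a nilpotent $K$-orbit (parameterized by $\lambda\in\Lambda^{n,n}$) and $\cL$ corresponds to an element of $\widehat{A_K(x_\lambda)}_{\kappa_i}$ for $x_\lambda\in\cO_\lambda$.

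For $\kappa_0$: the natural surjection $A_K(x_\lambda)\twoheadrightarrow A_{\bar K}(x_\lambda)$ identifies $\widehat{A_K(x_\lambda)}_{\kappa_0}$ with $\widehat{A_{\bar K}(x_\lambda)}$, and (since $\theta$ is inner) the $K$- and $\bar K$-orbits in $\cN_1$ coincide. Summing over $\lambda$ therefore gives $|\on{Char}_K(\Lg_1)_{\kappa_0}|=|\on{Char}_{\bar K}(\Lg_1)|$ in both parities of $n$.

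For $\kappa_1$: Lemma~\ref{lemma-component gp2} gives $|\widehat{A_K(x_\lambda)}_{\kappa_1}|=1$ if every $\lambda_i$ is even and $0$ otherwise, so $|\on{Char}_K(\Lg_1)_{\kappa_1}|$ equals the number of $\lambda\in\Lambda^{n,n}$ with all parts even. For such $\lambda$, the definition of $\Lambda^{n,n}$ forces $p_i,q_i$ to be even; writing $\lambda_i=2\mu_i$, $p_i=2a_i$, $q_i=2b_i$, the constraint $\sum_i(p_i+q_i)\lambda_i=2n$ becomes $\sum_i(a_i+b_i)\mu_i=n/2$. This set is empty when $n$ is odd, yielding $\on{Char}_K(\Lg_1)=\on{Char}_K(\Lg_1)_{\kappa_0}$ and hence the first formula. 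When $n$ is even, the map sending such $\lambda$ to the bi-partition $(\alpha,\beta)$ of $n/2$ with $\alpha=\prod_i\mu_i^{a_i}$ and $\beta=\prod_i\mu_i^{b_i}$ is a bijection, giving $|\on{Char}_K(\Lg_1)_{\kappa_1}|=|\cP_2(n/2)|$. The only substantive input is Lemma~\ref{lemma-component gp2}; the rest is combinatorial bookkeeping, and I expect no real obstacle beyond verifying the bi-partition bijection.
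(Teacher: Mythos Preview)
Your proof is correct and follows essentially the same approach as the paper: both reduce via Fourier transform to counting pairs $(\cO_\lambda,\cL)$, invoke Lemma~\ref{lemma-component gp2} to see that only all-even $\lambda$ contribute to the $\kappa_1$ piece, and then use the same bijection (halving part sizes and multiplicities) to $\cP_2(n/2)$. The paper's write-up is terser, only spelling out the even-$n$ bijection, but your additional remarks on the $\kappa_0$ identification and the odd-$n$ vanishing are the intended background.
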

\begin{proof}
Suppose $n$ is even. Then $|\on{Char}_K(\Lg_1)_{\kappa_1}|=|\cA_K(\Lg_1)_{\kappa_1}|=|\{\lambda\in\Lambda\mid\lambda_i\text{ even for all $i$}\} |$. The set of such $\lambda$'s are in bijection with $\cP_2(n/2)$ via the following map\\
$$
(2\mu_1)^{2p_1}_+(2\mu_1)^{2q_1}_-\cdots(2\mu_s)^{2p_s}_+(2\mu_s)^{2q_s}\mapsto \left((\mu_1)^{p_1}\cdots(\mu_s)^{p_s},(\mu_1)^{q_1}\cdots(\mu_s)^{q_s}\right).
$$
\end{proof}

\subsection{Representations of extended braid groups}\label{ssec-repeb}
For a finite Coxeter group $W$, let $B_W$ denote its associated braid group with $p:B_W\to W$ and let $\cH_W$ denote a Hecke algebra associated to $W$. We write $\on{Irr}\cH_{W}$ for the set of  irreducible representations of $\cH_{W}$ over $\bC$ (up to isomorphism). For $\rho\in\on{Irr}\cH_W$, we continue to write $\rho$ for the irreducible representation of $B_{W}$ obtained by pulling back $\rho$ via the surjective map $\bC[B_W]\to\cH_W$.

Let  $\widetilde B_W=B_W\ltimes H$ be an extended braid group, where $H$ is a finite group, and let $E\in\widehat{H}$. Suppose that $W_{E,0}$ is a Coxeter subgroup of $W_E=\on{Stab}_W(E)$. We write $B_{W}^{E,0}=p^{-1}(W_{E,0})$ and $\widetilde B_{W}^{E,0}=B_{W}^{E,0}\ltimes H$. Suppose that we have a surjective map $B_{W}^{E,0}\to B_{W_{E,0}}$. For $\rho\in\on{Irr}\cH_{W_{E,0}}$,  we write
\beqn
V_{\rho,E}=\bC[\widetilde B_W]\otimes_{\bC[\widetilde B_{W}^{E,0}]}(\rho\otimes E)
\eeqn
for the induced representation of $\widetilde B_W$, where $\widetilde B_{W}^{E,0}=B_{W}^{E,0}\ltimes H$ acts on $\rho\otimes E$ via the $H$ action on $E$ and the $B_{W}^{E,0}$-action on $\rho$ via $B_{W}^{E,0}\to B_{W_{E,0}}$.
Then $V_{\rho,E}$ is an irreducible representation of $\widetilde B_W$ if $W_{E,0}=W_E$. When $W_{E,0}\neq W_E$, we write $V_{\rho,E}^\delta$ for the non-isomorphic irreducible summands of $V_{\rho,E}$ as representations of $\widetilde B_W$.

\subsection{ Notations} We write $W_r$ for the Weyl group of type $B_r$ and $W_r'$ for the Weyl group of type $D_r$. 

We write $(A;x)_\infty=\prod_{s=0}^\infty(1-Ax^s)$ and $(A;x)_n=\prod_{s=0}^{n-1}(1-Ax^s)$.

We let $\mu_t$ denote the signed Young diagram
\beq\label{def-mut}
\mu_t=(2|t|-1)_{{sgn_t}}(2|t|-3)_{sgn_t}\cdots 3_{_{sgn_t}}1_{_{sgn_t}}\text{ if }t\neq0 ,\ 
\eeq
where ${sgn_t}=+ \text{ (resp. $-$) if $t>0$ (resp. $t<0$})$ and we also write $\mu_t=\emptyset\text{ if }t=0$.

We will often indicate the parameters of the Hecke algebra $\cH_W$ by writing $\cH_{W,c}$. For example, $\cH_{W,-1}$ denote the Hecke algebra associated to $W$ with parameter $-1$, that is, the Hecke algebra generated by $T_i$ ($i$ runs through a set of simple reflections in $W$) subject to braid relations plus the Hecke relations $(T_i-1)^2=0$.   Recall also the Hecke algebras $\cH_{W_k,1,-1},\cH_{W_k,-1,1},\cH_{W_k,1,1}=\bC[W_k]$ and their simple modules (see~\cite[\S 2.4]{VX})
\beq\label{heckrep1}
\on{Irr}\cH_{W_K,1,-1}=\{L_\tau\mid\tau\in\cP(k)\},\,\,\on{Irr}\cH_{W_K,1,1}=\{L_\sigma\mid\sigma\in\cP_2(k)\}.
\eeq Moreover, we have (see~\cite{AM,G})
\begin{subequations}
\beq\label{eqn-hecke1}
\sum_{n\geq 0}|\on{Irr}\cH_{W_n,-1}|\,x^n=\prod_{s\geq 1}(1+x^{2s})(1+x^s)
\eeq
\beq\label{eqn-heckeD}
\sum_{n\geq 0}|\on{Irr}\cH_{W_n',-1}|\,x^n=\frac{1}{2}\prod_{s\geq 1}(1+x^{2s-1})(1+x^s)=\frac{1}{2}\sum_{n\geq 0}|\on{Irr}\cH_{W_n,-1,1}|\,x^n.
\eeq
\end{subequations}

\section{Nearby cycle sheaves}\label{sec-nearby}

In this section we  will produce character sheaves supported on $\widecheck\cO_{m,t}$ for type BDI and full support character sheaves for type DIII, making use of the nearby cycle construction of~\cite{GVX,GVX2} and its generalisation in~\cite{VX2}. We use the notations from~\cite{VX} and refer the readers to {\rm[loc.cit]} for details. The $\widecheck\cO_{m,t}$ are the dual stratum associated to the following nilpotent orbits in $\cN_1$ (see~\cite[\S3.2]{VX})
\beqn
\cO_{m,t}:=\cO_{1^m_+1^m_-\sqcup\mu_t}, \ m=(N-|t|^2)/2,
\eeqn
where $\mu_t$ is defined in~\eqref{def-mut}. For convenience, let us define
\beq\label{eqn-mpq}
\eta_{m,t}=
2\text{ (resp. $4,\ 1$)}\text{ if $t$ is odd (resp. $m\equiv t/2\nmod2$,  $m\equiv t/2+1\nmod2$)}.
\eeq

We have the following short exact sequence 
\beqn
1\to I_{\widecheck\cO}\to\pi_1^K(\widecheck\cO)\to B_{W_{\widecheck\cO}}\to 1
\eeqn
where $I_{\widecheck\cO}\cong A_K(x)=Z_K(x)/Z_K(x)^0$, $x\in\widecheck\cO$, and $B_{W_{\widecheck\cO}}\cong B_{W_{\fa_\phi}}$ is the braid group associated to $W_{\fa_\phi}$, $\phi=(e,f,h)$ is a normal $\mathfrak{sl}_2$-triple with $e\in\cO,h\in\Lg_0$. 

Let $\fa\subset\Lg_1$ be a Cartan subspace and  let $W_\fa=N_K(\fa)/Z_K(\fa)$ be the little Weyl group. Then $W_{\fa^\phi}=W_\fa$ if $e=0$. Note also that if $|t|\leq 1$, that is, if $\theta$ is split, then $\widecheck\cO_{m,t}=\Lg_1^{rs}$, the set of regular semisimple elements of $\Lg_1$, and $\overline{\widecheck\cO_{m,t}}=\Lg_1$.

\subsection{Nearby cycles for type DIII and split type BDI}\label{sec-split} In this subsection we describe the nearby cycle sheaves when $(G,K)$ is of type DIII or of split type BDI. 

Let $I=Z_K(\fa)/Z_K(\fa)^0$ and $\chi\in\hat I$. Let $P_\chi\in\on{Perv}_K(\cN_1)$ be  the nearby cycle sheaf  defined in~\cite{GVX} (see also~\cite[\S 3.2]{VX}). We have
(\cite[Theorem 3.6]{GVX}, see also~\cite[\S 3.3]{VX})
\beqn
\mathfrak{F}P_\chi\cong\on{IC}(\Lg_1^{rs},\cM_\chi)
\eeqn
where $\cM_\chi$ is the $K$-equivariant local system on $\Lg_1^{rs}$ given by the representation $$
M_{\chi}=\bC[\widetilde{B}_{W_\fa}]\otimes_{\bC[\widetilde{B}_{W_\fa}^{\chi,0}]}(\bC_\chi\otimes\cH_{W_{\fa,\chi}^0})
$$ of $\widetilde B_{W_\fa}=\pi_1^K(\Lg_1^{rs})$. Here $\widetilde B_{W_\fa}\cong B_{W_\fa}\ltimes I$ and $W_{\fa,\chi}^0$ is a Coxeter subgroup of $W_{\fa,\chi}$.

\subsubsection{Type {\rm DIII}}\label{nearby-d3}Suppose that $(G=Spin_{2n},K)$ is of type DIII. We have
\beqn
I=1\ (\text{resp. }\bZ/2\bZ)\text{ if $n$ is odd (resp. even)},\ \ W_\fa=W_{[n/2]}.
\eeqn
Let $\chi_0$ denote the trivial character of $I$, and when $n$ is even, let $\chi_1$ denote the nontrivial character of $I$. By~\cite{GVX}, we have
\beqn
M_{\chi_0}=\cH_{W_{[n/2]},1,-1},\ \ M_{\chi_1}=\cH_{W_{n/2},1,1}\otimes\bC_{\chi_1}\cong\bC[W_{n/2}]\otimes\bC_{\chi_1}.
\eeqn
For each $\tau\in\cP([n/2])$ (resp. $\sigma\in\cP_2(n/2)$), recall the simple module $L_\rho$ (resp. $L_\tau$) of $\cH_{W_{[n/2]},1,-1}$ (resp. $\cH_{W_{[n/2]},1,1}$). Let $\cL_\tau$ (resp. $\cL_\sigma\otimes \bC_{\chi_1}$) denote the  $K$-equivariant local system on $\Lg_1^{rs}$ corresponding to the irreducible representation $L_\tau$ (resp. $L_\sigma\otimes\bC_{\chi_1}$) of $\pi_1^K(\Lg_1^{rs})=B_{W_{[n/2]}}\times I$ where $B_{W_{[n/2]}}$ acts  on $L_\tau$ (resp. $L_\sigma$) , and $I$ acts via $\chi_0$ (resp. $\chi_1$).   It follows that
\ber
&&\{\on{IC}(\Lg_1^{rs},\cL_\tau)\mid\tau\in\cP([n/2])\}\subset\on{Char}_K^\rf(\Lg_1)_{\kappa_0},\nonumber\\
\label{Diiin}&&\{\on{IC}(\Lg_1^{rs},\cL_\sigma\otimes\bC_{\chi_1})\mid\sigma\in\cP_2(n/2)\}\subset\on{Char}_K^\rf(\Lg_1)_{\kappa_1}\ (\text{when $n$ is even})
\eer
We in fact have equality in the above two equations, which will follow once we determine all character sheaves.
\subsubsection{Split type {\rm BDI}} Suppose that $\theta$ is split, i.e., $(G,K)=(Spin_N,K^{q+t,q})$, $|t|\leq1$.  Recall that
 \bern
 &&I=\langle\gamma_1,\ldots,\gamma_n\rangle\cong(\bZ/2\bZ)^n,\ \gamma_i=\check\alpha_i(-1),\,i=1,\ldots,n,\\
 &&\text{$W_\fa=W_{n}\text{ (resp. $W_{n}'$ ) if $N$ is odd (resp. even)}$}
 \eern
 where $\check\alpha_1,\ldots,\check\alpha_n $,  is a set of simple coroots in $\check R(G,A)$ with respect to the $\theta$-split maximal torus $A=Z_G(\fa)$. 
 
 Suppose that $N=2n+1$ (resp. $N=2n$). We choose a set of simple roots as $\alpha_i=e_i-e_{i+1}$, $i=1,\ldots,n-1$, and $\alpha_n=e_n$ (resp. $\alpha_n=e_{n-1}+e_n$). We define $\chi_m\in\hat I$, $0\leq m\leq n$, by
\beqn\chi_m(\gamma_i)=1,\ i\neq m,\ \chi_m(\gamma_m)=-1.
\eeqn
Let us write
\beq\label{widetb}
\widetilde{B}_{W_n}=\widetilde{B}_{W_\fa}\cong B_{W_n}\ltimes (\bZ/2\bZ)^n\,\text{ (resp. $\widetilde{B}_{W_n'}=\widetilde{B}_{W_\fa}\cong B_{W_n'}\ltimes (\bZ/2\bZ)^n$)}.
\eeq
The following proposition can be checked directly or be derived from~\cite{VX3}.
\begin{proposition}\label{prop-nearby}
{\rm (i)} Suppose that $(G,K)=(Spin_{2n+1}, K^{q+t,q})$, $|t|=1$.  We have
\begin{eqnarray*}
&&\{\cF(P_\chi)\mid\chi\in\hat{I}/W_n\}=\{\on{IC}(\Lg_1^{rs},\cM_{\chi_m}),\, m\in[0, [\frac{n}{2}]]\}\cup\{\on{IC}(\Lg_1^{rs},\cM_{\chi_n})\},\\
&&M_{\chi_m}\cong\bC[\widetilde B_{W_n}]\otimes_{\bC[\widetilde B_{W_n}^{\chi_m,0}]}\left(\bC_{\chi_m}\otimes(\cH_{W_{m},-1}\otimes\cH_{W_{n-m},-1})\right),\text{ if } 0\leq m\leq [{n}/{2}]\\
&&M_{\chi_n}\cong\bC[\widetilde B_{W_n}]\otimes_{\bC[\widetilde B_{W_n}^{\chi_n}]}\left(\bC_{\chi_n}\otimes\cH_{S_{n},-1}\right).
\end{eqnarray*}

{\rm (ii)} Suppose that $(G,K)=(Spin_{2n}, K^{n,n})$, and $n\geq 3$. We have
\bern
&&\{\cF(P_\chi)\mid\chi\in\hat{I}/W_n'\}=\{\on{IC}(\Lg_1^{rs},\cM_{\chi_m}),\, m\in[0, [\frac{n}{2}]]\cup\{n\}\cup\,(\text{if $n$ even}) \{n-1\}\},
\\
&&M_{\chi_m}\cong\bC[\widetilde B_{W_n'}]\otimes_{\bC[\widetilde B_{W_n'}^{\chi_m,0}]}\left(\bC_{\chi_m}\otimes(\cH_{W_{m}',-1}\otimes\cH_{W_{n-m}',-1})\right),\text{ if } 0\leq m\leq [{n}/{2}]\\
&&M_{\chi_m}\cong\bC[\widetilde B_{W_n'}]\otimes_{\bC[\widetilde B_{W_n'}^{\chi_m,0}]}\left(\bC_{\chi_m}\otimes\cH_{S_{n},-1}\right),\text{ if $m=n-1,n$}.
\end{eqnarray*}
\end{proposition}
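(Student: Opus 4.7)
The plan is to apply the nearby cycle formula $\mathfrak{F}P_\chi\cong\on{IC}(\Lg_1^{rs},\cM_\chi)$ with $\cM_\chi$ obtained by induction from the $\widetilde B_{W_\fa}^{\chi,0}$-module $\bC_\chi\otimes\cH_{W_{\fa,\chi}^0}$, as recalled at the start of \S\ref{sec-split}. What needs to be checked is (a) the classification of $W_\fa$-orbits on $\hat I$, producing the representatives $\chi_m$ listed in the statement, and (b) for each such $\chi_m$, the identification of the Coxeter subgroup $W_{\fa,\chi_m}^0\subset W_{\fa,\chi_m}$ together with the parameters of $\cH_{W_{\fa,\chi_m}^0}$.

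First I would identify the extended braid group $\widetilde B_{W_\fa}$ with the semidirect product in~\eqref{widetb} and realise $\hat I\cong(\bZ/2\bZ)^n$ via $\chi\mapsto(\chi(\gamma_i))_{i=1}^n$. Using the simple roots fixed in~\S\ref{sec-split}, the $W_\fa$-action on $\hat I$ is computed from the action on the simple coroots $\check\alpha_i$: for $i<n$ the simple reflection $s_i$ swaps $\gamma_i\leftrightarrow\gamma_{i+1}$, while the exceptional simple reflection $s_n$ introduces a twist ($\gamma_{n-1}\mapsto\gamma_{n-1}\gamma_n$ in type $B_n$, and the analogous twist in type $D_n$) coming from the fact that $\check\alpha_n$ is a long coroot (resp.~the coroot $e_{n-1}+e_n$). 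Enumerating orbits under this action, the representatives $\chi_m$ for $0\leq m\leq[n/2]$ can be chosen so that $W_{\fa,\chi_m}$ is the parabolic subgroup of type $B_m\times B_{n-m}$ (resp.~$D_m\times D_{n-m}$), which is already a Coxeter group and serves as $W_{\fa,\chi_m}^0$. The exceptional representative $\chi_n$, and in the $D_n$ case with $n$ even the additional representative $\chi_{n-1}$ arising from the outer diagram automorphism of $D_n$, has a non-parabolic stabiliser whose distinguished Coxeter reflection subgroup is the type-$A_{n-1}$ subgroup $S_n\subset W_n$ (resp.~$W_n'$).

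The Hecke-algebra parameters are then read off from the $\chi_m$-values: a simple reflection in $W_{\fa,\chi_m}^0$ whose associated coroot $\check\alpha$ satisfies $\chi_m(\check\alpha(-1))=-1$ contributes a generator with parameter $-1$, and otherwise parameter $1$. For the generic representatives $0\leq m\leq[n/2]$ this yields $\cH_{W_m,-1}\otimes\cH_{W_{n-m},-1}$ (resp.~the type-$D$ analogue), and for the exceptional representatives the twist by $s_n$ forces all generators of $S_n$ to receive parameter $-1$, giving $\cH_{S_n,-1}$, in both cases matching the stated formula for $M_{\chi_m}$. The main obstacle is the bookkeeping at the exceptional representatives, where $W_{\fa,\chi_m}^0$ is not a parabolic subgroup: one must carefully locate the correct Coxeter reflection subgroup and verify that no spurious generator with parameter $1$ enters. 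As indicated just before the statement, this whole analysis is subsumed by the uniform nearby-cycle calculation for split symmetric pairs carried out in~\cite{VX3}, which provides the alternative derivation.
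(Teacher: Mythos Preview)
Your overall strategy is exactly the one the paper intends: it offers no detailed argument beyond ``can be checked directly or be derived from~\cite{VX3}'', and your outline (enumerate $W_\fa$--orbits on $\hat I$, identify $W_{\fa,\chi_m}^0$, invoke the monodromy formula from~\cite{GVX}) is precisely what ``checked directly'' means. However, two of the concrete steps in your outline are wrong, and the second one is not a harmless slip.

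First, the action of $s_i$ (for $i<n$) on the generators $\gamma_j=\check\alpha_j(-1)$ is \emph{not} the swap $\gamma_i\leftrightarrow\gamma_{i+1}$; one has $s_i(\gamma_{i\pm1})=\gamma_{i\pm1}\gamma_i$ and $s_i(\gamma_j)=\gamma_j$ otherwise. This matters when you pass to the stabiliser: the reflection subgroup $W_m\times W_{n-m}$ that stabilises $\chi_m$ is not the parabolic $\langle s_j:j\neq m\rangle$ (that parabolic has type $A_{m-1}\times B_{n-m}$); one needs in addition the sign-change reflection $s_{e_m}$, which is not simple in $W_n$.

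Second, and more seriously, your rule for the Hecke parameters is incorrect. You claim the parameter at a simple reflection with coroot $\check\alpha$ is $-1$ iff $\chi_m(\check\alpha(-1))=-1$. But for the trivial character $\chi_0$ every such value is $+1$, so your rule produces $\bC[W_n]$, whereas the proposition gives $\cH_{W_n,-1}$. More generally, for any $m<n$ one computes $(2e_j)(-1)=\gamma_n$ in $I$, so $\chi_m$ is $+1$ on \emph{every} coroot element appearing in $W_{\fa,\chi_m}^0$; your rule would yield parameters $+1$ throughout, contradicting the stated $\cH_{W_m,-1}\otimes\cH_{W_{n-m},-1}$ and $\cH_{S_n,-1}$. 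In the split BDI case the Hecke parameters are determined by the rank-one local monodromy computation of~\cite{GVX} and are uniformly $-1$; the character $\chi$ enters only through cutting down the stabiliser, not through the parameters. (The mixed parameters you may be remembering occur in the non-split DIII case, \S\ref{nearby-d3}, where the rank-one geometry is different.) So the paragraph on reading off parameters needs to be replaced by the actual rank-one nearby-cycle computation, or by the citation to~\cite{VX3}.
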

Let us write $\Theta_{n,t}$ ($|t|\leq 1$) for the set of simple $\bC[\widetilde{ B}_{W_\fa}]$-modules that appear as composition factors of $M_{\chi}$, $\chi\in\widehat I$. 
We have $\Theta_{n,t}=\Theta^{\kappa_0}_{n,t}\sqcup\Theta^{\kappa_1}_{n,t}$, where $\Theta^{\kappa_0}_{n,t}$ (resp.  $\Theta^{\kappa_1}_{n,t}$) denote the subset of modules such that $\epsilon$ acts by  $1$ (resp. $-1$) via the natural map $\ker\pi\to I$.  Note that $\epsilon=\gamma_n$ if $N$ is odd, and $\epsilon=\gamma_{n-1}\gamma_n$ if $N$ is even. Moreover, $\Theta_{n,1}=\Theta_{n,-1}$.

For each $\phi\in\Theta_{n,t}$, we write $\cT_\phi$ for the corresponding local system on $\Lg_1^{rs}$. It follows from Proposition~\ref{prop-nearby} that $\IC(\Lg_1^{rs},\cT_\phi)\in\on{Char}_{K}^\rf(\Lg_1)$.  Suppose that $N$ is odd. We have $W_{\fa,\chi_m}=W_{\fa,\chi_m}^0$ for $0\leq m<n/2$ and
$W_{\fa,\chi_{n/2}}/ W_{\fa,\chi_{n/2}}^0\cong\bZ/2\bZ$. 
 Suppose that $N$ is even. We have $W_{\fa,\chi_m}=W_{\fa,\chi_m}^0\text{ if }m=0,\text{ or if $m=n$ and $n$ is odd}$,  $W_{\fa,\chi_m}/ W_{\fa,\chi_m}^0\cong\bZ/2\bZ,\,\text{ if $1\leq m<n/{2}$ or if }m=n-1,n\text{ and $n$ is even},\ W_{\fa,\chi_{\frac{n}{2}}}/W_{\fa,\chi_{\frac{n}{2}}}^0\cong\bZ/2\bZ\times\bZ/2\bZ\,.$ 
Applying the discussion in \S\ref{ssec-repeb}, we conclude that the IC sheaves in the following proposition are full support character sheaves.   The fact that these are all full support character sheaves will follow once we determine all character sheaves.

\begin{proposition}\label{prop-full}Suppose that $(G,K)=(Spin_{N},K^{q+t,q})$, where $|t|\leq1$.
We have
\ber
\label{Theta0}&&\on{Char}^\rf_K(\Lg_1)_{\kappa_i}=\{\on{IC}(\Lg_1^{rs}, \cT_{\rho})\mid\rho\in\Theta^{\kappa_i}_{n,|t|}\},\,i=0,1,\nonumber
\\
&&\Theta^{\kappa_0}_{n,1}=\{ V_{\rho_1\otimes\rho_2, \chi_k}\mid\rho_1\in\on{Irr}\cH_{W_k,-1},\,\rho_2\in\on{Irr}\cH_{W_{n-k},-1},\, k\in[0,n/2],\rho_1\not\cong\rho_2\}, \\
&&\cup\{V_{\rho\otimes\rho,\chi_{n/2}}^\delta\mid\rho\in\on{Irr}\cH_{W_{n/2},-1},\,\delta=\rm I,\rm{II}\}\text{ where $V_{\rho_1\otimes\rho_2, \chi_{n/2}}\cong V_{\rho_2\otimes\rho_1, \chi_{n/2}}$,}\nonumber\\
&&\Theta^{\kappa_0}_{n,0}=\{V_{\rho_1\otimes\rho_2, \chi_m}^\delta\mid\rho_1\in\on{Irr}\cH_{W_m',-1},\,\rho_2\in\on{Irr}\cH_{W_{n-m}',-1},\, m\in[1,\frac{n}{2}],\rho_1\not\cong\rho_2,\,\delta=\rm{I,II}\}\nonumber\\
&&\cup\{ V_{\rho\otimes\rho, \chi_{n/2}}^\delta\mid\rho\in\on{Irr}\cH_{W_{n/2}',-1},\,\delta={\rm I,II,III,IV}\}\cup\{V_{\rho,\chi_{0}}\mid\rho\in\on{Irr}\cH_{W_{n}',-1}\},\nonumber\\
&&\text{ where $V^\delta_{\rho_1\otimes\rho_2, \chi_{n/2}}\cong V^\delta_{\rho_2\otimes\rho_1, \chi_{n/2}}$,}\nonumber\\
\label{Thetant}&&\Theta^{\kappa_1}_{n,1}=\{V_{\rho,\chi_{n}}^\delta\mid\rho\in\on{Irr}\cH_{S_n,-1},\,\delta={\rm I,II}\},\ \Theta^{\kappa_1}_{n,0}=\{V_{\rho,\chi_{n}}\mid\rho\in\on{Irr}\cH_{S_n,-1}\}\text{ if $n$ is odd},\\
&&\Theta^{\kappa_1}_{n,0}=\{V_{\rho,\chi_{i}}^\delta\mid\rho\in\on{Irr}\cH_{S_n,-1},\,\delta={\rm I},{\rm II}, i=n-1,n\}\text{ if $n$ is even}.\nonumber
\eer
\end{proposition}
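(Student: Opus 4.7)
The strategy is to decompose each of the nearby cycle character sheaves $\cF(P_{\chi_m})$ given by Proposition~\ref{prop-nearby} into its irreducible constituents, which are automatically full-support character sheaves, and to check that these constituents exactly exhaust the families indexed by $\Theta^{\kappa_i}_{n,|t|}$. The containment that the listed IC sheaves all lie in $\on{Char}^\rf_K(\Lg_1)_{\kappa_i}$ will be proved directly; the reverse containment (exhaustiveness) is deferred to the global count of character sheaves carried out in Section~\ref{sec-proof}.

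For each representative $\chi_m$, Proposition~\ref{prop-nearby} exhibits $M_{\chi_m}$ as an induced module $\bC[\widetilde B_{W_\fa}]\otimes_{\bC[\widetilde B_{W_\fa}^{\chi_m,0}]}(\bC_{\chi_m}\otimes\cH)$, where $\cH$ is a regular representation of a Hecke algebra at parameter $-1$ (of type $B_k\times B_{n-k}$, $D_k\times D_{n-k}$, or $S_n$). The first step is to decompose $\cH$ into simples using (\ref{heckrep1}) and the standard parametrisations of $\on{Irr}\cH_{W_k,-1}$, $\on{Irr}\cH_{W_k',-1}$, $\on{Irr}\cH_{S_k,-1}$; by exactness of induction, the local system $\cM_{\chi_m}$ splits correspondingly as a direct sum of local systems associated to induced modules $V_{\rho_1\otimes\rho_2,\chi_m}$ (or $V_{\rho,\chi_m}$ in the $S_n$ case). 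Next, apply the formalism of \S\ref{ssec-repeb}: such an induced module is already irreducible over $\widetilde B_{W_\fa}$ when $W_{\fa,\chi_m}=W_{\fa,\chi_m}^0$, while otherwise it splits into $|W_{\fa,\chi_m}/W_{\fa,\chi_m}^0|$ non-isomorphic irreducible summands $V^\delta$ precisely for those $\rho_1\otimes\rho_2$ fixed by the quotient action (concretely, $\rho_1\cong\rho_2$ at the diagonal index). Using the index data $|W_{\fa,\chi_m}/W_{\fa,\chi_m}^0|\in\{1,2,4\}$ recorded immediately before the statement, one reads off the list of irreducibles appearing, recovering the unions displayed in (\ref{Theta0})--(\ref{Thetant}).

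The constituents are then sorted by the action of $\epsilon\in\ker\pi$: since $\epsilon=\gamma_n$ when $N$ is odd and $\epsilon=\gamma_{n-1}\gamma_n$ when $N$ is even, $\epsilon$ acts on $V_{\cdot,\chi_m}$ via the scalar $\chi_m(\epsilon)$, which equals $+1$ for all $m\notin\{n-1,n\}$ and equals $-1$ exactly for $m=n$ (odd $N$) or $m\in\{n-1,n\}$ (even $N$). This yields the split $\Theta_{n,t}^{\kappa_0}\sqcup\Theta_{n,t}^{\kappa_1}$ as stated. The main bookkeeping obstacle is the $N$ even, $t=0$ case: one must simultaneously track the four-fold splitting at the diagonal index $m=n/2$, the two-fold splittings at the top indices $m=n-1,n$ when $n$ is even, and the coincidence of the $W'_n$-orbits of $\chi_{n-1}$ and $\chi_n$ when $n$ is odd (so that only the single unsplit module $V_{\rho,\chi_n}$ appears in $\Theta_{n,0}^{\kappa_1}$).
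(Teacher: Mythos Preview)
Your proposal is correct and follows essentially the same route as the paper: decompose the induced modules $M_{\chi_m}$ from Proposition~\ref{prop-nearby} into simple constituents using the formalism of \S\ref{ssec-repeb} together with the stabilizer-quotient data $W_{\fa,\chi_m}/W_{\fa,\chi_m}^0$ recorded just before the proposition, sort the resulting irreducibles by the scalar action of $\epsilon$, and defer the reverse inclusion to the global count in Section~\ref{sec-proof}. The paper's own argument is precisely this, stated more tersely.
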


\subsection{Generalised nearby cycles}\label{sec-cuspn}
In this subsection we assume that $(G,K)$ is of type BDI and $ \widecheck\cO=\widecheck\cO_{1^m_+1^m_-\sqcup\mu_t}$, $|t|\geq 2$. Let us write $I_{\widecheck\cO}:= I_{m,t}$.

\begin{lemma}\label{lemma-gnc}
{\rm (i)} We have
\beqn
 I_{m,t}\cong
A_K(\cO_{\mu_t})\text{ if $m=0$} ,\ I_{m,t}\cong
\Gamma_{|t|}\times(\bZ/2\bZ)^{m-1}\text{ if $m\geq 1$}\,,
\eeqn
where $\Gamma_{|t|}$ is a central extension of $(\bZ/2\bZ)^{|t|-1}$ by $\bZ/2\bZ$. Moreover, when $m\geq 1$, $(\widehat{I}_{m,t})_{\kappa_1}$ consists of $2^{m-1}$ (resp. $2^m$) irreducible representations of dimension $2^{(|t|-1)/2}$ (resp. $2^{(|t|-2)/2}$) if $t$ is odd (resp. even). 

{\rm (ii)}
Suppose that $m\geq 1$. The action of ${W_{\widecheck\cO}}={W_m}$ on $(\widehat{I_{\widecheck\cO}})_{\kappa_1}$ is transitive (resp. has two orbits) if $\eta_{m,t}\in\{1,2\}$ (resp. $\eta_{m,t}=4$), where $\eta_{m,t}$ is defined in~\eqref{eqn-mpq}. More precisely, let $E$ (resp. $E_1,E_2$) be the irreducible representation(s) in $(\widehat{I_{\widecheck\cO}})_{\kappa_1}$  such that the factor $(\bZ/2\bZ)^{m-1}$ acts trivially. We have $(\widehat{I_{\widecheck\cO}})_{\kappa_1}={W_m}E$ (resp. $(\widehat{I_{\widecheck\cO}})_{\kappa_1}={W_m}E_1\sqcup {W_m}E_2$).

 Moreover, $\on{Stab}_{W_m}(E)\cong S_m$ if $\eta_{m,t}=1$, $\on{Stab}_{W_m}(E)\cong S_m\rtimes\langle\tau\rangle$ if $\eta_{m,t}=2$, and $\on{Stab}_{W_m}(E_i)\cong S_m\rtimes\langle\tau\rangle$ if $\eta_{m,t}=4$, $i=1,2$, where $\tau^2=1$.

\end{lemma}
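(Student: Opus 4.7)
The plan is to compute $I_{m,t}$ by extending the Clifford-algebra construction of Lemma~\ref{lemma-component gp}(iii), and then to analyze the $W_m$-action via signed permutations on the sign generators.

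For part (i), I take a generic $x = e + f \in \widecheck\cO_{m,t}$ with $e$ of type $1^m_+1^m_-\sqcup\mu_t$ and $f$ regular semisimple in a Cartan subspace of the centralizer $\fz_{\Lg_1}(e)$. The orthogonal decomposition $V = V_{\mu_t} \oplus V_{1^m_+1^m_-}$ is preserved by both $e$ and $f$, with $e$ vanishing on $V_{1^m_+1^m_-}$ and $f$ vanishing on $V_{\mu_t}$. Accordingly, $Z_K(x)$ factors: on $V_{\mu_t}$ it contributes the group from Lemma~\ref{lemma-component gp}(iii) (since each odd part of $\mu_t$ has multiplicity one, case (iii) applies), and on $V_{1^m_+1^m_-}$ the regular semisimple $f$ cuts the stabilizer down to a group whose component group is generated by sign-reflections $\gamma_j = \check\alpha_j(-1)$, one for each pair of $\pm$ eigenlines of $f$. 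These two pieces are coupled via the central element $\epsilon \in \ker\pi \subset K$, which can be expressed both in terms of the Clifford generators attached to $\mu_t$ and in terms of $\gamma_1 \cdots \gamma_m$ inside $V_{1^m_+1^m_-}$. Making this identification explicit reduces the free rank of the sign factor by one, yielding $I_{m,t} \cong \Gamma_{|t|} \times (\bZ/2\bZ)^{m-1}$ for $m \geq 1$, with $I_{0,t} = A_K(\cO_{\mu_t})$ immediate. The count of $(\widehat{I}_{m,t})_{\kappa_1}$ then follows by tensoring the $\kappa_1$-representations of $\Gamma_{|t|}$ (of dimension $2^{(|t|-1)/2}$ with multiplicity $2$ when $|t|$ is odd, or $2^{(|t|-2)/2}$ with multiplicity $1$ when $|t|$ is even, by the count of Lemma~\ref{lemma-component gp}(iii)) with the $2^{m-1}$ characters of $(\bZ/2\bZ)^{m-1}$.

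For part (ii), the little Weyl group $W_{\widecheck\cO} = W_m$ acts on $I_{m,t}$ through its natural action on the $m$ pairs of eigenlines of $f$, inducing signed permutations on $\gamma_1, \ldots, \gamma_m$ while centralizing $\Gamma_{|t|}$. On $(\widehat{I}_{m,t})_{\kappa_1}$ this gives signed permutations on the $(\bZ/2\bZ)^{m-1}$-characters. Fixing $E \in (\widehat{I}_{m,t})_{\kappa_1}$ with trivial $(\bZ/2\bZ)^{m-1}$-restriction, the orbit structure is controlled by the Clifford relation expressing $\epsilon$ in terms of the $\gamma_j$'s and Clifford generators of the $\mu_t$-part: the parity of the total number of sign flips is either forced by the $\kappa_1$-constraint (giving one orbit) or free (giving two). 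A case split on $|t| \bmod 2$ and on $m \bmod 2$ relative to $t/2$ produces precisely the trichotomy of $\eta_{m,t}$ values in~\eqref{eqn-mpq}: $\eta_{m,t} \in \{1,2\}$ yields a single $W_m$-orbit on $(\widehat{I_{\widecheck\cO}})_{\kappa_1}$, while $\eta_{m,t} = 4$ yields two. The stabilizer $\on{Stab}_{W_m}(E)$ always contains the symmetric group $S_m$ (pure permutations of the $m$ sign pairs preserve trivial characters), and the additional involution $\tau$ arises from a specific total-sign-flip that preserves $E$ (or $E_i$) exactly when $\eta_{m,t} \geq 2$.

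The main obstacle will be the careful bookkeeping of the Clifford identity expressing $\epsilon$ as a product of $\gamma_1 \cdots \gamma_m$ with a specific Clifford element $z$ in the $\mu_t$-piece. This identity governs which sign-flip combinations preserve a given $\kappa_1$-representation, thereby producing the three cases of $\eta_{m,t}$. The parity check requires explicit computation in $C_V^+$ using the orthonormal bases introduced in the proof of Lemma~\ref{lemma-component gp}(iii), but once the identity is verified, the orbit count and stabilizer descriptions follow by direct combinatorics on signed permutations of $(\bZ/2\bZ)^{m-1}$-characters.
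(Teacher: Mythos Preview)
Your overall strategy matches the paper's: compute $I_{m,t}$ via explicit Clifford generators adapted to an orthogonal decomposition of $V$, then analyze the $W_m$-action on those generators. Two points, however, need correction.

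First, a minor slip in part (i): the multiplicities of the $\kappa_1$-irreducibles of $\Gamma_{|t|}$ are swapped. By the Clifford-algebra count in the proof of Lemma~\ref{lemma-component gp}, $\Gamma_{|t|}$ has \emph{one} such irreducible when $|t|$ is odd and \emph{two} when $|t|$ is even; tensoring with the $2^{m-1}$ characters of $(\bZ/2\bZ)^{m-1}$ then gives the stated counts $2^{m-1}$ and $2^m$ respectively.

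Second, and more substantively, your claim that $W_m$ ``centralizes $\Gamma_{|t|}$'' is incompatible with the direct-product description $I_{m,t}\cong\Gamma_{|t|}\times(\bZ/2\bZ)^{m-1}$. The blocks $W_i$ attached to the parts $4i-1$ of $\mu_t$ have an odd number of $V^{-sgn_t}$-vectors, so the naive Clifford product over $W_i$ alone does not lie in $K$. The paper remedies this by appending the factor $z_1=e^{1,3}f^{1,3}$ from the first $2$-dimensional block $V_1$ to each generator $y_i$; only with this adjustment do even products of $x_i,y_i$ land in $K$ and generate the full $\Gamma_{|t|}$. But this coupling means $W_m$ --- which permutes and sign-flips $V_1,\ldots,V_m$ --- acts nontrivially on $\Gamma_{|t|}$: when $|t|$ is even, the element $\gamma=x_1\cdots x_{|t|/2}y_1\cdots y_{|t|/2}$ that distinguishes the two $\kappa_1$-irreducibles satisfies $s_m\gamma=-\gamma$, and when $t\equiv 2\pmod 4$ additionally $s_1\gamma=\gamma\gamma_1$. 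This nontrivial interaction is precisely what produces the $\eta_{m,t}$ trichotomy; your proposed mechanism (a single Clifford identity for $\epsilon$ controlling sign-flip parity) does not capture it. Relatedly, the extra involution $\tau$ in the stabilizer is not a ``total sign-flip'' but the specific element $\tau=\prod_{i=1}^{[(m+1)/2]}s_{e_i+e_{m+1-i}}$, and its effect on $\gamma$ has to be computed separately in each of the four cases ($t\bmod 4$ and the parity of $m$) to pin down the stabilizers.
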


\begin{proof}The lemma holds when $m=0$ since we have $\widecheck\cO=\cO$. Suppose that $m\geq 1$.    Let $x\in \widecheck\cO$. We have an orthogonal decomposition $V=\oplus_{i=1}^{[\frac{|t|+1}{2}]}U_{i}\oplus_{i=1}^{[\frac{|t|}{2}]}W_{i}\oplus\oplus_{i=1}^m V_i$ into $x$-stable subspaces such that $\dim U_i=4i-3$, $\dim W_i=4i-1$ and $\dim V_i=2$. Moreover, there exist an orthonormal basis $e^{i,1}_j\in V^{sgn_t},j\in[1, 2i-1], f_{j}^{i,1}\in V^{-sgn_t}, j\in[1, 2i-2]$ of $U_i$, an orthonormal basis $e^{i,2}_j\in V^{sgn_t},j\in[1, 2i], f_{j}^{i,2}\in V^{-sgn_t}, j\in[1,2i-1]$ of $W_i$ and an orthonormal basis $e^{i,3}\in V^+,\,f^{i,3}\in V^-$ of $V_i$. We define  
$x_i=e^{i,1}_1\cdots e_{2i-1}^{i,1}f_1^{i,1}\cdots f_{2i-2}^{i,1}\in C_V$, $i\in[1,[\frac{|t|+1}{2}]]$, $
y_i=e^{i,2}_1\cdots e_{2i}^{i,2}f_1^{i,2}\cdots f_{2i-1}^{i,2}e^{1,3}f^{1,3}\in C_V,\,i\in[1,[\frac{|t|}{2}]]
$, and $z_i=e^{i,3}f^{i,3}\in C_V$, $i\in[1, m]$. Let $\Gamma_{|t|}$ be the subgroup of $C_V^{\times}$ consisting of the products of an even number of $x_i$, $y_i$'s (combined). Then $A_K(x)\cong\Gamma_{|t|}\times\langle \gamma_i,i\in[1,m-1]\rangle$, where $\gamma_i=z_iz_{i+1}$ and $\langle \gamma_i,i\in[1,m-1]\rangle\cong(\bZ/2\bZ)^{m-1}$. Moreover, $\Gamma_{|t|}$ is a central extension of $(\bZ/2\bZ)^{|t|-1}$ by $\bZ/2\bZ=\{1,\epsilon\}$ and $\bC[\Gamma_{{|t|}}]/(\epsilon+1)$ is isomorphic to the positive part of the Clifford algebra in $|t|$ variables, $C_{|t|}^+$. Part (i) of the lemma follows.

To prove part (ii) of the lemma, we note that when $|t|$ is even, the two non-isomorphic irreducible representations of 
$C_{|t|}^+$ are distinguished by the action of $\gamma=x_1\cdots x_{\frac{|t|}{2}}y_1\cdots y_{\frac{|t|}{2}}$. Let $\rho\in(\widehat{I_{\widecheck\cO}})_{\kappa_1}$. For simplicity, let us continue to write $\gamma_j$, $\gamma$ in place of $\rho(\gamma_j),\rho(\gamma)$. One checks that the action of ${W_m}=\langle s_1,\ldots,s_m\rangle$ (where $s_m$ is the special simple reflection) are as follows
 $s_i(\gamma_j)=\gamma_j,\,j\neq i-1,i+1,\ s_i(\gamma_j)=\gamma_i\gamma_j,\,j=i-1,i+1,\,i\in[1,m-1],\,s_m\gamma_j=\gamma_j,\,j\neq m-1,\,s_m(\gamma_{m-1})=-\gamma_{m-1}$. Moreover, when $t\equiv 0\,\nmod 4 $,  
$\ s_i\gamma=\gamma,\,i\neq m,\,s_m\gamma=-\gamma,$ and when $t\equiv 2\,\nmod4$, 
$s_i\gamma=\gamma,\,i\neq 1,m,\,s_1\gamma=\gamma\gamma_1,\,s_m\gamma=-\gamma.$ It follows that $\on{Stab}_{W_m}E,\on{Stab}_{W_m}{E_i}\subset\on{Stab}_{W_m}\chi=\langle s_1,\ldots,s_{m-1}\rangle\rtimes\langle\tau\rangle$, where $\chi:\langle\epsilon,\gamma_1,\ldots,\gamma_{m-1})\to\bC^*$, $\chi(\epsilon)=-1$, $\chi(\gamma_i)=1$ and $\tau=\prod_{i=1}^{[\frac{m+1}{2}]}s_{e_i+e_{m+1-i}}$. Here $s_{e_i+e_{m+1-i}}=s_{e_i}s_{e_i-e_{m+1-i}}s_{e_i}$ if $i\neq (m+1)/2$, $s_{e_i}=s_is_{i+1}\cdots s_{m-1}s_ms_{m-1}\cdots s_{i+1}s_i$, and $s_{e_i-e_{m+1-i}}=s_is_{i+1}\cdots s_{m-i-1}s_{m-i}s_{m-i-1}\cdots s_{i+1}s_i$. Furthermore, when $t\equiv 0\,\nmod4$,  $\tau\gamma=-\gamma$ (resp. $\gamma$) if $m$ is odd (resp. even); and when $t\equiv 2\,\nmod4$,  $\tau\gamma=\gamma\prod_{j=1}^{m-1}\gamma_j$ (resp. $-\gamma\prod_{j=1}^{m-1}\gamma_j$) if $m$ is odd (resp. even).  One then readily verifies that $\on{Stab}_{W_m}E$ and $\on{Stab}_{W_m}E_i$ are as desired and that  $E_1$ and $E_2$ are not in the same $W_m$-orbit. This completes the proof of the lemma.
\end{proof}

Suppose that $m\geq 1$ and $|t|\geq 2$. Let $\cE$ (resp. $\cE_1,\cE_2$) be the $K$-equivariant local system(s) on $X_{x}=K.x$, $x\in\widecheck\cO$, corresponding to the irreducible representation(s) $E$ (resp. $E_1,E_2$) in $(\widehat{I_{\widecheck\cO}})_{\kappa_1}$  when $\eta_{m,t}\in\{1,2\}$ (resp. when $\eta_{m,t}=4$). As in~\cite{VX2}, we form the nearby cycle sheaf  $P_{\widecheck\cO,\cE}\in\on{Perv}_{K}(\cN_1)$ (resp. $P_{\widecheck\cO,\cE_i}\in\on{Perv}_{K}(\cN_1)$, $i=1,2$).

Let us write $\widetilde{B}_{m,t}=\pi_1^K(\widecheck\cO_{m,t})=B_{W_m}\ltimes I_{m,t}$.  We use the notations from \S\ref{ssec-repeb}.

\begin{prop}\label{nearby-nontrivial} We have
$\mathfrak{F} P_{\widecheck\cO,\cE}\cong\on{IC}(\widecheck\cO,\cM_\cE)$ (resp. $\mathfrak{F} P_{\widecheck\cO,\cE_i}\cong\on{IC}(\widecheck\cO,\cM_{\cE_i})$, $i=1,2$), where $\cM_\cE$ (resp. $\cM_{\cE_i}$) corresponds to the following representation of $\pi_1^K(\widecheck\cO)$
\bern
M_E=\bC[\widetilde B_{m,t}]\otimes_{\bC[\widetilde B_{m,t}^{E,0}]}(\cH_{S_m,-1}\otimes E)\text{ (resp. $M_{E_i}=\bC[\widetilde B_{m,t}]\otimes_{\bC[\widetilde B_{m,t}^{E_i,0}]}(\cH_{S_m,-1}\otimes E_i)$)}.
\eern
\end{prop}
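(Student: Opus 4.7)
The plan is to apply the general framework for generalised nearby cycle sheaves developed in \cite{VX2} and combine it with the explicit description of the component group $I_{\widecheck\cO} = I_{m,t}$ and the $W_m$-action on $(\widehat{I_{\widecheck\cO}})_{\kappa_1}$ given in Lemma~\ref{lemma-gnc}. By construction, $\mathfrak{F}P_{\widecheck\cO,\cE}$ (and similarly $\mathfrak{F}P_{\widecheck\cO,\cE_i}$) is supported on $\overline{\widecheck\cO}$, and by the general principle that Fourier transform of a nearby cycle sheaf along the ``dual direction'' is an IC-extension of a local system on the dual stratum (as in \cite[Theorem 3.6]{GVX} and its generalisation in \cite{VX2}), we get $\mathfrak{F}P_{\widecheck\cO,\cE} \cong \on{IC}(\widecheck\cO,\cM)$ for some $K$-equivariant local system $\cM$ on $\widecheck\cO$. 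The task is to identify the representation of $\widetilde B_{m,t} = \pi_1^K(\widecheck\cO) = B_{W_m}\ltimes I_{m,t}$ corresponding to $\cM$.

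First I would unwind the construction: by design of the nearby cycle with local-system coefficient $\cE$ (resp.\ $\cE_i$), the restriction of $\cM$ to the fibre over $x\in\widecheck\cO$ carries the representation $E$ (resp.\ $E_i$) of $I_{m,t}$ built in. Second, I would analyse a transverse slice $\widecheck\cO \hookrightarrow \widecheck\cO$ at a point and identify the ``little Weyl group'' factor $B_{W_m}$ as acting through a Hecke-algebra quotient; the parameter is forced by a single local-monodromy calculation around a generic wall of the Cartan subspace $\fa$. In analogy with \S\ref{sec-split} (where the full-support case yields $\cH_{W_\fa,\pm 1,\pm 1}$), the relevant local monodromy in the $E$-isotypic summand is $-1$ on each simple braid generator that survives the stabiliser, producing the Hecke algebra $\cH_{S_m,-1}$ attached to the Coxeter subgroup $S_m \subset W_{\fa^\phi,E}$ identified in Lemma~\ref{lemma-gnc}(ii).

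Once these two pieces are in hand, the induction formula is forced by Mackey/Clifford theory applied to the normal inclusion $\widetilde B_{m,t}^{E,0} \trianglelefteq \widetilde B_{m,t}$: on the subgroup $\widetilde B_{m,t}^{E,0} = B_{W_m}^{E,0}\ltimes I_{m,t}$, the representation is $\cH_{S_m,-1}\otimes E$ (with $I_{m,t}$ acting through $E$ and $B_{W_m}^{E,0}$ acting through the surjection $B_{W_m}^{E,0}\twoheadrightarrow B_{S_m}\twoheadrightarrow\cH_{S_m,-1}$), and induction up to $\widetilde B_{m,t}$ produces an irreducible $\widetilde B_{m,t}$-module precisely because $\on{Stab}_{W_m}(E)$ is exhausted by the Coxeter part $S_m$ or differs from it by $\langle\tau\rangle$ as catalogued in Lemma~\ref{lemma-gnc}(ii). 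In the two-orbit case $\eta_{m,t}=4$, the calculation is performed separately on $E_1$ and $E_2$, giving the two sheaves $\mathfrak{F}P_{\widecheck\cO,\cE_i}$.

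The main obstacle is the second step: pinning down that the braid-group action on $\cM$ really factors through $\cH_{S_m,-1}$ and not through a Hecke algebra with another parameter. I would handle this by a slice argument, reducing to the case $m=1$ (i.e.\ replacing the dual stratum with its transverse slice along one $\mathfrak{sl}_2$-triple strand), where the computation collapses to a rank-one monodromy around a ramified loop; the sign $-1$ then comes from the nontrivial action of $\epsilon\in\ker\pi$ on $E$ (the defining condition $E\in(\widehat{I_{m,t}})_{\kappa_1}$), matching the parameter choice dictated by the generalised nearby cycle formalism of \cite{VX2}. Patching these local computations together via the normality of $B_{W_m}^{E,0}$ in $B_{W_m}$ yields the claimed induced representation.
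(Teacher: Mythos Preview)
There is a genuine gap. You assert early on that ``by construction, $\mathfrak{F}P_{\widecheck\cO,\cE}$ is supported on $\overline{\widecheck\cO}$'' and that the general principle of \cite{VX2} yields $\mathfrak{F}P_{\widecheck\cO,\cE}\cong\on{IC}(\widecheck\cO,\cM)$. But this is not automatic: the relevant theorem in \cite{VX2} (Theorem 3.2 there) is conditional on Assumption 3.1, and verifying that assumption is the entire substance of the paper's proof. Concretely, one must show that for any $e'\in Z_{\Lg_1}(a)\cap\cO'$ with $\cO'\subsetneq\bar\cO$ and any $a'\in(\La^{\phi_{e'}})^{rs}$, the component group $Z_K(a,a',e')/Z_K(a,a',e')^0$ does \emph{not} afford the character $\kappa_1$. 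The paper does this by computing $Z_K(a)$ explicitly, reducing to the symmetric pair $(Spin_{t^2},K^{\frac{t^2+t}{2},\frac{t^2-t}{2}})$, and then running a combinatorial argument: if $\widecheck\cO'$ (for $\cO'\subset\bar\cO_{\mu_t}$) affords $\kappa_1$ then $|p_i-q_i|\leq 1$ for every part, which forces $\sum(p_i+q_i)\lambda_i\geq 1+3+\cdots+(2t-1)=t^2$ and hence $\cO'=\cO_{\mu_t}$. Nothing in your outline addresses this exclusion of smaller strata.

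By contrast, the issue you identify as ``the main obstacle''---pinning down the Hecke parameter as $-1$---is not where the work lies in the paper's argument. Once Assumption 3.1 is verified, the identification of $M_E$ (including the parameter) follows directly from \cite[Theorem 3.2]{VX2} together with Proposition~\ref{prop-nearby} (the split case). Your slice-to-$m=1$ monodromy computation is a reasonable alternative route to that piece, but it does not substitute for the missing step above: without excluding $\kappa_1$-contributions from smaller strata you cannot conclude that the Fourier transform is an IC extension from $\widecheck\cO$ at all.
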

\begin{proof}
By Lemma~\ref{lemma-gnc},  
$W_{\widecheck\cO,E}^0\cong W_{\widecheck\cO,E_i}^0\cong S_m$. Moreover, 
$W_{\widecheck\cO,E}/W_{\widecheck\cO,E}^0=1$ (resp. $\bZ/2\bZ$) if $\eta_{m,t}=1$ (resp. 2), 
 and
$W_{\widecheck\cO,E_i}/W_{\widecheck\cO,E_i}^0\cong\bZ/2\bZ$.
The proposition follows from~\cite[Theorem 3.2]{VX2} and Proposition~\ref{prop-nearby} provided that we check Assumption 3.1 in~\cite{VX2} holds. Let $e\in \cO_{m,t}$ and $a\in(\La^{\phi})^{rs}$. As in~\cite{VX2}, it suffices to show that for any $e'\in Z_{\Lg_1}(a)\cap\cO'$, where $\cO'\subset\bar{\cO}$ and $\cO'\neq\cO$, and $a'\in(\La^{\phi_{e'}})^{rs}$, $Z_{K}(a,a',e')/Z_{K}(a,a',e')^0$ does not afford the character $\kappa_1$. We have
\bern
&&Z_G(a)\cong\left(\{(v,g_1,\ldots,g_m)\in\bC^*\times GL_1\times\cdots\times GL_1|v^2=g_1\cdots g_m\}\times Spin _{t^2}\right)/\langle\nu,\epsilon_t\rangle\\
&&Z_K(a)\cong\left(K_1\times Spin _{\frac{t^2+t}{2}}\times Spin _{\frac{t^2-t}{2}}\right)/\langle\nu,\epsilon_1,\epsilon_2\rangle
\eern
where $\nu=(-1,1,\ldots,1)$, $\epsilon_t$ is the nontrivial element in the kernel of $Spin_{t^2}\to SO_{t^2}$ and
$K_1=\{(v,g_1,\ldots,g_l)\in\bC^*\times O_1\times\cdots\times O_1|v^2=g_1\cdots g_l\}$.  
The action of $Z_K(a)$ on $Z_{\Lg_1}(a)$ is isomorphic to the action of $K'$ on $\Lg_1'$ for the symmetric pair $(G',K')=(Spin _{t^2},K^{\frac{t^2+t}{2},\frac{t^2-t}{2}})$. 
We are reduced to considering the symmetric pair $(G',K')$ and which $\check\cO'$ can afford the non-trivial character $\kappa'_1$ for any $\cO'\subset\bar\cO_{\mu_t}$. Suppose that $\cO'=(\lambda_1)^{p_1}_+(\lambda_1)_-^{q_1}\cdots(\lambda_s)^{p_s}_+(\lambda_s)_-^{q_s}$. The same argument as in Lemma~\ref{lemma-component gp} shows that $\check\cO'$ can afford $\kappa'_1$ only if $|p_i-q_i|\leq 1$. Let $n_0=|\{i|p_i=q_i\}|$, $n_1=|\{i|p_i=q_i+1\}|$, $n_{-1}=|\{i|p_i=q_i-1\}|$. Then we have $n_1-n_{-1}=t$. Thus $n_1\geq t$
\beqn
\sum(p_i+q_i)\lambda_i\geq 1+3+\cdots+2t-1=t^2.
\eeqn
Here we have used that if $p_i=q_i+1$, then $\lambda_i$ is odd. We conclude that $\cO'=\cO_{\mu_t}$. This completes the proof of the proposition.
\end{proof}
 
Let us write $\Theta_{m,t}^{\kappa_1}$ ($|t|\geq 2$) for the set of simple $\bC[\widetilde{ B}_{m,t}]$-modules that appear as composition factors of $M_{E}$, or $M_{E_i}$, $i=1,2$.  For each $\phi\in\Theta_{m,t}^{\kappa_1}$, we write $\cT_\phi$ for the corresponding local system on $\widecheck\cO_{m,t}$. We have the following corollary of Proposition~\ref{nearby-nontrivial}. Note that the proof of the proposition implies that for the symmetric pair $(Spin (t^2),K^{\frac{t^2+t}{2},\frac{t^2-t}{2}})$, the only $\widecheck\cO$ that can afford the non-trivial character $\kappa_1$ is $\widecheck\cO_{\mu_t}=\cO_{\mu_t}$.
\begin{corollary}Suppose that $(G,K)=(Spin_N, K^{q+t,q})$, $|t|\geq 2$, and $m=\frac{N-t^2}{2}$. We have
\bern
&&\left\{\IC(\widecheck\cO_{1^m_+1^m_-\sqcup\mu_t},\cT_\phi)\mid\phi\in\Theta_{m,t}^{\kappa_1}\right\}\subset\on{Char}_K(\Lg_1)_{\kappa_1},\\
&&\Theta_{0,t}^{\kappa_1}=\widehat{A_K({\mu_t})}_{\kappa_1},\ \Theta_{m,t}^{\kappa_1}=\{V_{\rho,E}^\delta\mid\rho\in\on{Irr}\cH_{S_m,-1},\,\delta={\rm I,II}\}\text{ if  $m\geq 1$ and $\eta_{m,t}=2$}\\
&&\Theta_{m,t}^{\kappa_1}=\{V_{\rho,E_i}^\delta\mid\rho\in\on{Irr}\cH_{S_m,-1},\,\delta={\rm I,II},\,i=1,2\}\text{ if $m\geq 1$ and $\eta_{m,t}=4$}\\
&&\Theta_{m,t}^{\kappa_1}=\{V_{\rho,E}\mid\rho\in\on{Irr}\cH_{S_m,-1}\}\text{ if  $m\geq 1$ and $\eta_{m,t}=1$}.
\eern

\end{corollary}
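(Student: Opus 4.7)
The plan is to deduce the corollary directly from Proposition~\ref{nearby-nontrivial} together with Lemma~\ref{lemma-gnc} and the framework of \S\ref{ssec-repeb}. The first assertion, namely that the listed IC sheaves lie in $\on{Char}_K(\Lg_1)_{\kappa_1}$, is essentially immediate: the nearby cycle sheaves $P_{\widecheck\cO,\cE}$ (respectively $P_{\widecheck\cO,\cE_i}$) belong to $\on{Perv}_K(\cN_1)$, and since $E$ (respectively $E_i$) has central character $\kappa_1$ on $\ker\pi$, so do all composition factors of $\cM_\cE$ (respectively $\cM_{\cE_i}$). Thus their Fourier transforms, which by Proposition~\ref{nearby-nontrivial} are supported on $\widecheck\cO_{1_+^m 1_-^m\sqcup\mu_t}$ with local system a constituent of $\cM_\cE$ (or $\cM_{\cE_i}$), are character sheaves with central character $\kappa_1$.

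For the explicit description, I would treat $m=0$ separately. In that case $\widecheck\cO = \cO_{\mu_t}$ and $\pi_1^K(\widecheck\cO) = I_{0,t} = A_K(\cO_{\mu_t})$, so the local systems on $\widecheck\cO$ are simply the irreducible representations of $A_K(\cO_{\mu_t})$, and those with $\kappa_1$-central character are precisely $\widehat{A_K(\mu_t)}_{\kappa_1}$. No induction is involved.

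For $m\geq 1$, I would invoke the general setup of \S\ref{ssec-repeb} with $W = W_m = W_{\widecheck\cO}$, $H = I_{m,t}$, and the appropriate choice of $E$ or $E_i$. By Lemma~\ref{lemma-gnc}(ii), when $\eta_{m,t}=1$ the group $W_m$ acts transitively on $(\widehat{I_{\widecheck\cO}})_{\kappa_1}$ with full stabiliser $\on{Stab}_{W_m}(E) = S_m = W_{\widecheck\cO,E}^0$, so the induced module $M_E$ is already $\bC[\widetilde B_{m,t}]$-irreducible, giving $V_{\rho,E}$ for each $\rho \in \on{Irr}\cH_{S_m,-1}$. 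When $\eta_{m,t}=2$ there is still a single $W_m$-orbit on $(\widehat{I_{\widecheck\cO}})_{\kappa_1}$, but now $\on{Stab}_{W_m}(E)/S_m \cong \bZ/2\bZ$, so by Clifford theory $V_{\rho,E}$ decomposes as $V_{\rho,E}^{\rm I}\oplus V_{\rho,E}^{\rm II}$ into two non-isomorphic irreducibles. When $\eta_{m,t}=4$, Lemma~\ref{lemma-gnc}(ii) tells us that $W_m$ has two distinct orbits $W_m E_1$ and $W_m E_2$, each with stabiliser $S_m\rtimes\langle\tau\rangle$ of index two over $W_{\widecheck\cO,E_i}^0 = S_m$; hence each $V_{\rho,E_i}$ splits into two non-isomorphic irreducibles $V_{\rho,E_i}^{\rm I}, V_{\rho,E_i}^{\rm II}$, yielding four irreducibles for each $\rho$.

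The main obstacle is to verify that the induced modules really do split into exactly the stated number of pairwise non-isomorphic irreducibles and to check that the constituents across different $\rho$ are all distinct. For the splitting, one needs to confirm that the $2$-cocycle governing the extension of $S_m$ by $\langle\tau\rangle$ inside $\on{Stab}_{W_m}(E)$ (or $\on{Stab}_{W_m}(E_i)$) is trivial when lifted through the braid group surjection $B_{W_m}^{E,0} \twoheadrightarrow B_{S_m}$, so that the Hecke-algebra representation $\rho$ extends to $\on{Stab}$, forcing the decomposition into two pieces rather than remaining irreducible. Non-isomorphism of the constituents for distinct $\rho$ follows from Frobenius reciprocity applied to the $S_m$-isotypic decomposition. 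Once these are checked, the explicit parameter sets $\Theta_{m,t}^{\kappa_1}$ are assembled case-by-case according to the value of $\eta_{m,t}$, completing the proof.
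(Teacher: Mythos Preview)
Your approach is correct and matches the paper's: the corollary is stated without separate proof, as an immediate consequence of Proposition~\ref{nearby-nontrivial} combined with the stabiliser computation in Lemma~\ref{lemma-gnc}(ii) and the induction framework of \S\ref{ssec-repeb}. Your identification of the case split according to $\eta_{m,t}$ and the corresponding decomposition behaviour of $V_{\rho,E}$ is exactly right.

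One minor remark: the cocycle concern you raise is legitimate, but note that in the paper's convention of \S\ref{ssec-repeb} the symbols $V_{\rho,E}^\delta$ are simply \emph{defined} to be the non-isomorphic irreducible summands of $V_{\rho,E}$ when $W_{E,0}\neq W_E$, so the corollary as stated is a definition of $\Theta_{m,t}^{\kappa_1}$ rather than a claim about a specific number of summands. The fact that there are exactly two (respectively four) summands per $\rho$ is not proved here but is ultimately confirmed by the counting argument in \S\ref{ssec-number}, where the total number of sheaves produced is matched against $|\on{Char}_K(\Lg_1)_{\kappa_1}|$ computed in Proposition~\ref{number1}.
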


\section{Character sheaves}\label{sec-cs}
In this section we give explicit descriptions of the character sheaves following the strategy given in~\cite{VX}.

\subsection{Supports of character sheaves and equivariant fundamental groups}In this subsection we describe the supports of character sheaves. As in~\cite{VX}, we define a set $\underline{\cN_1}^{\text{cs},\kappa_i}$ of nilpotent $K$-orbits in $\cN_1$ such that for $\cO\in\underline{\cN_1}^{\text{cs},\kappa_i}$ the corresponding $\widecheck\cO$ are the supports of character sheaves in $\on{Char}_{ K}(\Lg_1)_{\kappa_i}$, $i=0,1$. 

\subsubsection{Type {\rm BDI}} Assume that $(G,K)$ is of type BDI. We first recall the Richardson orbits associated to $\theta$-stable Borel groups from~\cite{VX}.
\begin{definition} 

Let $\Sigma_b^{p,q}\subset\Sigma^{p,q}$ consist of signed Young diagrams of the form (see~\eqref{signed Young diagram-2})
\beqn
\lambda=(2\mu_1+1)_{\epsilon_1}(2\mu_2+1)_{\epsilon_2}\cdots(2\mu_k+1)_{\epsilon_k}
\eeqn
where $\mu_1\geq\mu_2\geq\cdots\geq\mu_k\geq 0$, $\epsilon_i\in\{0,1\}$ and $\epsilon_i=\epsilon_j$ if $\mu_i=\mu_j$,
 such that the following conditions hold
\begin{enumerate}
\item[{\rm (a)}] $\text{ when $N$ is odd,  $\epsilon_1\equiv \min(p,q)\ \nmod 2$ and $\epsilon_{2i}+\mu_{2i}\equiv \epsilon_{2i+1}+\mu_{2i+1}\,\nmod 2$ for $i\geq 1$;}$
\item[{\rm (b)}]$\text{ when $N$ is even,  $\epsilon_{2i-1}+\mu_{2i-1}\equiv \epsilon_{2i}+\mu_{2i}\mod 2$ for $i\geq 1$.}$
\end{enumerate}
\end{definition}
Note that $\Sigma_b^{p,q}\cap\Sigma_3=\emptyset$. Let
\beqn
\Sigma_b=\sqcup_{p,q}\Sigma_b^{p,q},\,\Sigma_{b,i}^{p,q}=\Sigma_{b}^{p,q}\cap\Sigma_i,\,\Sigma_{b,i}=\Sigma_{b}\cap\Sigma_i,\,i=1,2.
\eeqn
The set of  Richardson orbits attached to $\theta$-stable Borel subgroups is the following
\beqn
\{\cO_\lambda\mid\lambda\in\Sigma_{b,1}\}\sqcup\{\cO_\lambda^{\delta}\mid\lambda\in\Sigma_{b,2},\delta={\rm I,II}\}.
\eeqn

 The set $\underline{\cN_1}^{\text{cs},\kappa_0}$ consists of the following orbits:
\begin{eqnarray*}
 &&\cO_{m,k,\mu}=\cO_{1^m_+1^m_-2^k_+2^k_-\sqcup\mu},\ \  m\equiv q\hspace{-.1in}\mod 2\text{ if $N$ is even},\  \mu\in \Sigma_b^{p-m-2k,q-m-2k}\cup\{\emptyset\}. 
 \end{eqnarray*}
  The set $\underline{\cN_1}^{\text{cs},\kappa_1}$ consists of the following orbits:
\bern
&&\cO_{m,t}=\cO_{1^m_+1^m_-2^k_+2^k_-\sqcup\mu_t},\ \,\,k=(N-t^2-2m)/4,
\eern
where $\mu_t$ is defined in~\eqref{def-mut}. Here $\lambda\sqcup\mu$ denotes the signed Young diagram obtained by joining $\lambda$ and $\mu$ together, i.e., the rows of $\lambda\sqcup\mu$ are the rows of $\lambda$ and $\mu$ rearranged according to the lengths of the rows.  Moreover, $\cO_{m,k,\mu}$ denotes $\cO_{m,k,\mu}^\delta$ if $m=0$ and $\mu\in\Sigma_{b,2}\cup\{\emptyset\}$,  and $\cO_{m,t}$ denotes $\cO_{m,t}^\delta$ if $m=0$ and $|t|\leq1$.

 Note that $\underline{\cN_1}^{\text{cs},\kappa_0}\cap \underline{\cN_1}^{\text{cs},\kappa_1}\neq\emptyset$ if and only if $|t|\leq 1$, that is, $\theta$ is split. In the latter case $\underline{\cN_1}^{\text{cs},\kappa_1}\subset \underline{\cN_1}^{\text{cs},\kappa_0}$.

In what follows we describe the equivariant fundamental groups of the above $\widecheck\cO$'s. The character sheaves in $\Char_K(\Lg_1)_{\kappa_0}$ are given by local systems supported on $\widecheck\cO_{m,k,\mu}$ corresponding to irreducible representations of the equivariant fundamental group $\pi_1^K(\widecheck\cO_{m,k,\mu})$ for which the actions of $\pi_1^{K}(\widecheck\cO_{m,k,\mu})$ factor through $\pi_1^{\bar K}(\widecheck\cO_{m,k,\mu})$. Thus it suffices to write down  the equivariant fundamental groups $\pi_1^{\bar K}(\widecheck\cO_{m,k,\mu})$. Using entirely similar argument as in~\cite[\S6.1]{VX}, we conclude that 
\ber
\label{explicit fundamental1}
&&\pi_1^{\bar K}(\widecheck\cO_{m,k,\mu})\cong\begin{cases} {}_0\widetilde{B}_{W_m'}\times \widetilde{B}_{W_k}^1&\text{ if }\mu=\emptyset\\
\widetilde{B}_{W_m}^1\times \widetilde{B}_{W_k}^1\times(\bZ/2\bZ)^{r_\mu}&\text{ if $\mu\neq \emptyset$ and $\mu\in \Sigma_{b,1}$}\\
{}_0\widetilde{B}_{W_m}\times \widetilde{B}_{W_k}^1\times(\bZ/2\bZ)^{r_\mu}&\text{ if $\mu\neq \emptyset$ and $\mu\in \Sigma_{b,2}$}\end{cases}\\
\label{explicit fundamental2}&&\pi_1^K(\widecheck\cO_{m,t})\cong\widetilde B_{W_k}^1\times \widetilde{B}_{m,t},\ k=(N-t^2-2m)/4
\eer
 where $ {}_0\widetilde{B}_{W_m}=(\bZ/2\bZ)^{m-1}\rtimes B_{W_m},\ {}_0\widetilde{B}_{W_m'}=(\bZ/2\bZ)^{m-1}\rtimes B_{W_m'},\,\widetilde{B}_{W_m}^1=(\bZ/2\bZ)^{m}\rtimes^1 B_{W_m},$ $\widetilde{B}_{m,0}=\widetilde B_{W_m'}$, $\widetilde{B}_{m,1}=\widetilde B_{W_m}$,  $\widetilde{B}_{m,t}\cong B_{W_m}\ltimes I_{m,t}$ ($|t|\geq 2$, see \S\ref{sec-cuspn}), and $r_\mu$ are defined in~\eqref{def of rmu}. Moreover, ${}_0\widetilde{B}_{W_0}={}_0\widetilde{B}_{W_0'}=B_{W_0}=B_{W_0'}=\widetilde{B}_{W_0}^1=\{1\}$ and the superscript $1$ in the notation of $\widetilde B_{W_m}^1$ indicates the difference of the action of the braid group on the 2-group with that in $\widetilde{B}_{W_m}$ of~\eqref{widetb}. Moreover, $\widetilde B_{W_m}^1$ can be identified with  $\widetilde B_{W_m}$ of~\cite{VX}.

\subsubsection{Type {\rm DIII}}Assume that $(G,K)$ is of type DIII. We first recall the Richardson orbits associated to $\theta$-stable Borel groups from~\cite{VX}.
\begin{definition} 

Let $\Lambda_b^{n,n}\subset\Lambda^{n,n}$ (see~\eqref{eqn-signed YDiii}) consist of signed Young diagrams such that $p_i=q_i\leq 1$ when $\lambda_i$ is odd, and $p_i\cdot q_i=0$ when $\lambda_i$ is even.\end{definition}

The set of  Richardson orbits attached to $\theta$-stable Borel subgroups is
$
\{\cO_\lambda\mid\lambda\in\Lambda_b^{n,n}\}.
$ Note that by Corollary~\ref{corollary-Diii}, $\on{Char}_K(\Lg_1)_{\kappa_1}=\emptyset$ when $n$ is odd. 
We have 
\begin{eqnarray*}
 &&\underline{\cN_1}^{\text{cs},\kappa_0}=\{\cO_{k,\mu}=\cO_{1^{2k}_+1^{2k}_-\sqcup\mu}\mid 0\leq k\leq[n/2],\  \mu\in \Lambda_b^{n-2k,n-2k}\cup\{\emptyset\}\},\\
 &&\underline{\cN_1}^{\text{cs},\kappa_1}=\{\cO_{n/2,\emptyset}=\cO_{1^n_+1^n_-}\}\text{ when $n$ is even}.
 \end{eqnarray*}
Note that $\underline{\cN_1}^{\text{cs},\kappa_1}\subset \underline{\cN_1}^{\text{cs},\kappa_0}$. 
The equivariant fundamental groups are as follows 
\ber
\label{explicit fundamental3}
&&\pi_1^{\bar K}(\widecheck\cO_{k,\mu})\cong{B}_{W_k}\text{ when $\mu\neq\emptyset$},\ \ \pi_1^K(\widecheck \cO_{n/2,\emptyset})\cong{B}_{W_{n/2}}\times \bZ/2\bZ.
\eer

\subsection{The set $\Pi_{\cO}$  associated to Richardson orbits in type BDI} In this subsection we assume that $(G,K)$ is of type BDI. To write down irreducible representations of the equivariant fundamental groups that give rise to character sheaves, we define a subset  $\Pi_{\cO_\mu}\subset\widehat{A_K(\cO_\mu)}_{\kappa_0}\cong\widehat{A_{\bar K}(\cO_\mu)}$ for the Richardson orbits $\cO_\mu$ following~\cite{VX}, where $A_K(\cO_\mu):=A_K(x_\mu)$, $A_{\bar K}(\cO_\mu):=A_{\bar K}(x_\mu)$, $x_\mu\in\cO_\mu$. 
Let $\mu\in\Sigma_b$ and let $\cO_\mu$ be the corresponding Richardson orbit.    We can write $\mu$ as follows 
\beqn
\mu=(2\mu_1+1)_{\epsilon_1}^{m_1}(2\mu_2+1)^{m_2}_{\epsilon_2}\cdots(2\mu_s+1)^{m_s}_{\epsilon_s},
\eeqn
where $\mu_1>\mu_2>\cdots>\mu_s\geq 0$, $m_i>0$  and $\epsilon_i\in\{0,1\}$, $1\leq i\leq s$. We define
\beqn\label{def of Js}
\begin{gathered}J_\mu^1=\{i=1,\ldots,s\mid \mu_i\equiv 0\hspace{-.1in}\mod 2,\epsilon_i=0\}\cup\{i=1,\ldots,s\mid \mu_i\equiv1\hspace{-.1in}\mod 2,\epsilon_i=1\}\\
J_\mu^2=\{i=1,\ldots,s\mid \mu_i\equiv 0\hspace{-.1in}\mod 2,\epsilon_i=1\}\cup\{i=1,\ldots,s\mid \mu_i\equiv1\hspace{-.1in}\mod 2,\epsilon_i=0\}.
\end{gathered}
\eeqn
Let $x\in\cO_\mu$. Then
$
A_{\bar K}(\cO_\mu)\cong S\left(\prod_{i\in J_\mu^1}O_{m_i}\right)\times S\left(\prod_{i\in J_\mu^2} O_{m_i}\right)/\left(SO_{m_1}\times \cdots\times SO_{m_s}\right).
$
We define $\Omega_{\cO_\mu}=\Omega_\mu\subset\{1,\ldots,s\}$ to be the set of $j\in[1,s]$ such that 
\begin{enumerate}[topsep=-1ex]
\item $\sum_{a= j}^sm_a$ is even,
\item if $j\geq 2$, then either $\mu_{j-1}\geq \mu_{j}+2$ or $\epsilon_{j-1}= \epsilon_j$.
\end{enumerate}
We set
$l_\mu=l_{\cO_\mu}:=|\Omega_{\cO_\mu}|.
$ Suppose that $\Omega_{\cO_\mu}=\{j_1,\ldots,j_l\}$, $j_1<\cdots<j_l$, $l=l_\cO$, and we write $j_{l+1}=s+1$. Note that $j_1=1$ if and only if $N$ is even. Thus $l_\cO\geq 1$ when $N$ is even. 

For $1\leq i\leq s-1$, let $\delta_i$ denote the generator of $S(O_{m_{i}
}\times O_{m_{i+1}})/(SO_{m_{i}}\times SO_{m_{i+1}})\cong\bZ/2\bZ$. Suppose that $\mu\in\Sigma_{b,1}$. Then $J_\mu^1\neq \emptyset$ and $J_\mu^2\neq\emptyset$. Let $i_1<i_2<\ldots<i_t$ be the set of $j$'s in $[1,s-1]$ such that $\mu_{j}+\epsilon_{j}\neq\mu_{j+1}+\epsilon_{j+1}$. Then $A_{\bar K}(\cO_\mu)$ is generated by $\delta_j$, $j\in[i_a+1,i_{a+1}-1]$, $0\leq a\leq t$, $\tau_{i_a}=\prod_{b=i_a}^{i_{a+1}}\delta_b$, $a=1,\ldots,t-1$. Here we have written $i_0=0$ and $i_{t+1}=s$. Note that $i_a+1\in\Omega_\mu$, $a=1,\ldots,t$. Suppose that $\mu\in\Sigma_{b,2}$. Then either $J_\mu^1= \emptyset$ or $J_\mu^2=\emptyset$, and $A_{\bar K}(\cO_\mu)$ is generated by $\delta_i$, $1\leq i\leq s-1$.

The subset $\Pi_{\cO_\mu}=\Pi_\mu\subset\widehat{A_{ K}(\cO_\mu)}_{\kappa_0}$ is defined   as follows (via  $\widehat{A_K(\cO_\mu)}_{\kappa_0}\cong\widehat{A_{\bar K}(\cO_\mu)}$)
\beq
\label{char-bi}
\Pi_{\cO_\mu}=\{\chi\in \widehat{A_{\bar K}(\cO_\mu)}\mid \chi(\delta_r)=1 \ \text{if}\  r+1\notin \Omega_{\cO_\mu} \}\,.
\eeq
Note that this is well-defined since $i_a+1\in\Omega_{\cO_\mu}$, $a=1,\ldots,t$ when $\mu\in\Sigma_{b,1}$. In particular, we see that 
\beq\label{set of chars}
|\Pi_{\cO_\mu}|=\begin{cases}2^{l_{\mu}-1}\text{ (resp. $2^{l_\mu})$}&\text{ if $N$ is odd and $\mu\in\Sigma_{b,1}$ (resp. $\Sigma_{b,2}$)}\\
2^{l_{\mu}-2}\text{ (resp. $2^{l_\mu-1})$}&\text{ if $N$ is even and $\mu\in\Sigma_{b,1}$ (resp. $\Sigma_{b,2}$)}.
\end{cases}
\eeq

\subsection{Character sheaves}
To describe the character sheaves for type BDI, we first write down representations of the fundamental groups in~\eqref{explicit fundamental1} and~\eqref{explicit fundamental2}.

Recall the set $\Theta_{n,1}^{\kappa_0}$ (resp. $\Theta_{n,0}^{\kappa_0}$)  of irreducible representations of ${\widetilde{B}_{W_n}}$ (resp. ${\widetilde{B}_{W_n'}}$) defined in \S\ref{sec-split}. We can regard it as a set of simple $\bC[{}_0\widetilde{ B}_{W_n}]$-modules (resp. $\bC[{}_0\widetilde{ B}_{W_n'}]$-modules) via the natural projection ${\widetilde{B}_{W_n}}\to{}_0{\widetilde{B}_{W_n}}$ (resp. ${\widetilde{B}_{W_n'}}\to{}_0{\widetilde{B}_{W_n'}})$ where $\epsilon\mapsto 1$. 

 Let $\Theta_{n,1}^{\kappa_0,1}$ and $\Theta_{n,0}^{\kappa_0,1}$ denote the following sets of non-isomorphic simple $\bC[\widetilde B_{W_n}^1]$-modules
\ber\label{Theta1}
\Theta_{n,1}^{\kappa_0,1}&=&\{V_{\rho_1\otimes\rho_2, \tilde\chi_k}\mid\rho_1\in\on{Irr}\cH_{W_k,-1},\,\rho_2\in\on{Irr}\cH_{W_{n-k},-1},\, k\in[0,n]\},\\
\Theta^{\kappa_0,1}_{n,0}&=&\{ V_{\rho_1\otimes\rho_2, \tilde\chi_k}\mid\rho_1\in\on{Irr}\cH_{W_k,-1,1},\,\rho_2\in\on{Irr}\cH_{W_{n-k},-1,1},\, k\in[0,n]\},\nonumber
\eer
where $\tilde\chi_k$ is a set of representatives of $B_{W_n}$-orbits on $\widehat{(\bZ/2\bZ)^n}$ such that  $(W_n)_{\tilde\chi_k}\cong W_k\times W_{n-k}$.

Let $\Theta_{n,1}^{\kappa_0,2}$ and $\Theta_{n,0}^{\kappa_0,2}$ denote the following set of non-isomorphic simple  $\bC[{}_0\widetilde B_{W_n}]$-modules
\ber\label{Theta2}
&&\Theta_{n,1}^{\kappa_0,2}=\Theta_{n,1}^{\kappa_0}\text{ (see~\eqref{Theta0})},\\
&&\Theta^{\kappa_0,2}_{n,0}=\{V_{\rho_1\otimes\rho_2, \chi_k}\mid\rho_1\in\on{Irr}\cH_{W_k,-1,1},\,\rho_2\in\on{Irr}\cH_{W_{n-k},-1,1},\, k\in[0,n/2],\rho_1\not\cong\rho_2\},\nonumber \\
&&\cup\{V_{\rho\otimes\rho,\chi_{n/2}}^\delta\mid\rho\in\on{Irr}\cH_{W_{n/2},-1,1},\,\delta=\rm I,\rm{II}\}\text{ where $V_{\rho_1\otimes\rho_2, \chi_{n/2}}\cong V_{\rho_2\otimes\rho_1, \chi_n/2}$}\nonumber.
\eer

Let us write
\beqn
\varepsilon_t=1\text{ (resp. $0$) if $t$ is odd (resp. even)}.
\eeqn
Suppose that $\mu\in\Sigma_{b,1}$ (resp. $\Sigma_{b,2}$, $\emptyset$). For each $\psi\in\Theta_{m,\varepsilon_t}^{\kappa_0,1}$ (resp. $\Theta_{m,\varepsilon_t}^{\kappa_0,2}$, $\Theta_{m,0}^{\kappa_0}$), $\tau\in\cP(k)$, and $\phi\in\Pi_{\cO_\mu}$ (see~\eqref{char-bi}), let $L_\psi\boxtimes L_\tau\boxtimes\phi$ denote the representation  of $\pi_1^{ K}(\widecheck\cO_{m,k,\mu})$ such that the action of $\pi_1^{ K}(\widecheck\cO_{m,k,\mu}) $ factors through $\pi_1^{ \bar K}(\widecheck\cO_{m,k,\mu})$, where $\widetilde{B}_{W_m}^1$ (resp. ${}_0\widetilde{B}_{W_m}$, ${}_0\widetilde{B}_{W_m'}$) acts via $\psi$, $\widetilde{B}_{W_k}$ acts via the $B_{W_k}$-representation $L_\tau$, and $(\bZ/2\bZ)^{r_\mu}$ acts via $\phi$.   Let $\calT_{\psi,\tau,\phi}$ denote the corresponding  irreducible $ K$-equivariant local system on $\widecheck\cO_{m,k,\mu}$. We write $\cT_{\psi,\tau}$ (resp. $\cT_{\tau,\phi}$) instead of $\calT_{\psi,\tau,\phi}$ when $\mu=\emptyset$ (resp. $m=0$) etc. 

Recall the set $\Theta_{m,t}^{\kappa_1}$ of irreducible representations of $\widetilde{B}_{m,t}$ defined in \S\ref{sec-split} and \S\ref{sec-cuspn}.   For each $\sigma\in\cP_2(k)$ and $\rho\in\Theta_{m,t}^{\kappa_1}$, let $\cF_{\rho,\sigma}$ denote the irreducible $K$-equivariant local system on $\widecheck\cO_{1^m_+1^m_-2^k_+2^k_-\sqcup{\mu_t}}$ corresponding to the irreducible representation $L_\sigma\boxtimes\rho$ of $\widetilde B_{W_k}^1\times \widetilde B_{m,t}$ such that $\widetilde B_{W_k}^1$ acts via the $B_{W_K}$-representation $L_\sigma$, and $\widetilde B_{m,t}$ acts via $\rho$.  

 \begin{thm}\label{thm-type BD}
 Suppose that $( G,K)=(Spin_{N},K^{q+t,q})$.
Then
 \bern
&{\rm(i)}& \Char_{K}(\Lg_1)_{\kappa_0}=\left\{\on{IC}(\widecheck\cO_{1^m_+1^m_-2^k_+2^k_-\sqcup\mu},\cT_{\psi,\tau,\phi})\,|\,\mu\in \Sigma_{b,1},\,\psi\in\Theta_{m,\varepsilon_t}^{\kappa_0,1},\,\tau\in\cP(k),\, \phi\in\Pi_{\cO_\mu} \right\}\\
&&\cup\left\{\on{IC}(\widecheck\cO_{1^m_+1^m_-2^k_+2^k_-\sqcup\mu},\cT_{\psi,\tau,\phi})\,|\,m>0,\,\mu\in \Sigma_{b,2},\,\psi\in\Theta_{m,\varepsilon_t}^{\kappa_0,2},\,\tau\in\cP(k),\ \phi\in\Pi_{\cO_\mu} \right\}\\
&&\cup\left\{\on{IC}(\widecheck\cO_{2^k_+2^k_-\sqcup\mu}^\delta,\cT_{\tau,\phi})\,|\,\delta={\rm I,II},\,\mu\in \Sigma_{b,2},\,\tau\in\cP(k),\ \phi\in\Pi_{\cO_\mu} \right\}\\
&&\cup\left\{\on{IC}(\widecheck\cO_{1^m_+1^m_-2^k_+2^k_-},\cT_{\psi,\tau})\,|\,m>0,\,\rho\in\Theta_{m,0}^{\kappa_0},\tau\in\cP(k) \right\}\,(t=0)\\
&&\cup\left\{\on{IC}(\widecheck\cO_{2^{n/2}_+2^{n/2}_-}^\delta,\cT_{\tau})\,|\,\delta=\mathrm{\rm{I}},\mathrm{\rm{II}},{\rm III}, {\rm IV},\,\tau\in\cP(n/2)\right\}\,(t=0\text{ and $n$ even}).
\\
&{\rm (ii)}& \on{Char}_K(\Lg_1)_{\kappa_1}=\left\{\on{IC}(\widecheck\cO_{1^{m}_+1^m_-2^k_+2^k_-\sqcup{\mu_t}},\cF_{\rho,\sigma})\mid m>0,\sigma\in\cP_2(k),\,\rho\in\Theta_{m,t}^{\kappa_1}\right\}\\&&\cup\left\{\on{IC}(\widecheck\cO_{2^k_+2^k_-\sqcup\mu_t},\cF_{\rho,\sigma})\mid \sigma\in\cP_2(k),\,\rho\in\Theta_{0,t}^{\kappa_1}=\widehat{A_K(\cO_{\mu_t})}_{\kappa_1}\right\}\ (|t|\geq 2)\\&&\cup\left\{\on{IC}(\widecheck\cO^\delta_{2^k_+2^k_-\sqcup1_{sgn_t}},\cF_{\sigma})\mid \sigma\in\cP_2(k),\,\delta=\rm{I},\rm{II}\right\}\ (|t|=1)\\
&&\cup\left\{\on{IC}(\widecheck\cO^\delta_{2^k_+2^k_-},\cF_{\sigma})\mid \sigma\in\cP_2(k),\,\delta=\rm{I},\rm{II},\rm{III},\rm{IV}\right\}\ (t=0\text{ and $n$ even})\,.
\eern
\end{thm}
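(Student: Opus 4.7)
The plan is to mimic the strategy of \cite{VX}, splitting the argument according to the central character $\kappa_i$ ($i=0,1$) and separately establishing (a) that each IC sheaf listed in the theorem is a character sheaf, and (b) that the listed sheaves exhaust $\on{Char}_K(\Lg_1)_{\kappa_i}$ via a counting argument. Production of character sheaves will come from two sources: the (generalised) nearby cycle sheaves constructed in Section~\ref{sec-nearby} and parabolic induction from $\theta$-stable Levi subgroups $L$ whose derived group is of spin type with smaller rank combined with a $GL$-factor. Exhaustion will be obtained by computing the cardinality of $\cA_K(\cN_1)_{\kappa_i}$ (nilpotent orbital complexes) using Lemma~\ref{lemma-component gp} and matching it, via generating function identities including~\eqref{eqn-hecke1}--\eqref{eqn-heckeD}, with the cardinality of the right-hand side.

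For part (i), I first reduce to $\bar K$: since $\on{Char}_K(\Lg_1)_{\kappa_0}\cong\on{Char}_{\bar K}(\Lg_1)$, the problem is formally that of the adjoint symmetric pair $(\bar G,\bar K^{p,q})$ and the arguments of \cite{VX} apply once we replace $\widetilde K$ there by $\widetilde K^0=\bar K$. The sheaves supported on $\widecheck\cO_{m,0,\mu}$ with $\mu=\emptyset$ come from the nearby cycle construction of \S\ref{sec-split} (yielding the local systems $\cT_\psi$ parametrized by $\Theta^{\kappa_0,1}_{n,\varepsilon_t}$ or $\Theta^{\kappa_0,2}_{n,\varepsilon_t}$ depending on parity); those supported on $\widecheck\cO_{m,k,\mu}$ with $k>0$ or $\mu\neq\emptyset$ are obtained by parabolic induction from a $\theta$-stable Levi of the form $GL_1^k\times GL_2^{?}\times L'$ where $L'$ is a smaller spin symmetric pair, and by induction on dimension. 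The equivariant fundamental group factorization~\eqref{explicit fundamental1} ensures the induced local systems decompose into the $\cT_{\psi,\tau,\phi}$ as listed.

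For part (ii), I produce the sheaves supported on $\widecheck\cO_{1^m_+1^m_-\sqcup\mu_t}$ via the generalised nearby cycle construction of \S\ref{sec-cuspn} (already giving the summands parametrised by $\Theta^{\kappa_1}_{m,t}$), and then induce parabolically with a $GL_k$-factor to obtain sheaves on $\widecheck\cO_{1^m_+1^m_-2^k_+2^k_-\sqcup\mu_t}$: the factorisation~\eqref{explicit fundamental2} and the fact that $\widetilde B_{W_k}^1$ acts through $B_{W_k}$ on $L_\sigma$ for $\sigma\in\cP_2(k)$ gives the local systems $\cF_{\rho,\sigma}$. Verifying that the $\widetilde B_{W_k}^1$-part is indeed governed by $\cH_{W_k,1,1}$ (whence the bi-partitions $\cP_2(k)$) rather than $\cH_{W_k,1,-1}$ requires tracking the central character carefully on the induced piece, analogous to the $\kappa_1$ computation in \cite{VX2}. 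This is the step where I expect the main technical difficulty: the braid group action on the fibre coming from the Levi is twisted by the Clifford-type central extension $\Gamma_{|t|}$ in $I_{m,t}$ (Lemma~\ref{lemma-gnc}), and one must show the induced representation is exactly $V_{\rho,E}$ (or $V_{\rho,E_i}^\delta$) according to the cases $\eta_{m,t}\in\{1,2,4\}$ in Lemma~\ref{lemma-gnc}(ii).

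Finally, to prove exhaustion, I count $|\on{Char}_K(\Lg_1)_{\kappa_i}|=|\cA_K(\cN_1)_{\kappa_i}|$ by summing $|\widehat{A_K(x_\lambda)}_{\kappa_i}|$ over $\lambda\in\Sigma^{p,q}$ using Lemma~\ref{lemma-component gp}, and compare with the cardinality of the set in the theorem. For $\kappa_0$, the sum reorganises into contributions indexed by $(m,k,\mu)$ with $\mu\in\Sigma_b$ and is handled by the $\bar K$-version of~\cite[\S 7]{VX}. For $\kappa_1$, the total count equals $\sum_{k,m,t} |\cP_2(k)|\cdot|\Theta^{\kappa_1}_{m,t}|$, and the matching reduces to the hook-content type identity relating $\sum|\on{Irr}\cH_{S_m,-1}|$ to products built out of the basic generating functions in~\eqref{eqn-hecke1}--\eqref{eqn-heckeD}; this is formalised as Proposition~\ref{number1}. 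The combinatorial bookkeeping around the cases $\eta_{m,t}=1,2,4$ (and the separate treatment of $m=0$ via $\widehat{A_K(\cO_{\mu_t})}_{\kappa_1}$ from Lemma~\ref{lemma-component gp}) is the most delicate part of the counting, but once both sides are seen to have the same cardinality, injectivity of the above production procedure (which follows from the distinct supports and distinct local systems at each support) completes the proof.
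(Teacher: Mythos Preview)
Your proposal is correct and follows essentially the same strategy as the paper's proof in \S\ref{sec-proof}: produce each listed IC sheaf as a summand of a parabolic induction (from Levis $L_{m,k,\mu}\cong SO_{2m+\varepsilon_N}\times GL_{2k}\times GL_1^{n-m-2k}$ for $\kappa_0$, and $GL_{2k}\times Spin_{N-4k}$ for $\kappa_1$), then establish exhaustion by matching cardinalities via generating-function identities (Proposition~\ref{number1} and \S\ref{ssec-number}). One point of emphasis is inverted relative to the paper: the $\kappa_1$ counting is in fact the easy half (a one-line observation that $\eta_{\frac{N-t^2}{2},t}=\eta_{\frac{N-4k-t^2}{2},t}$ reduces it to $\prod(1-x^{4s})^{-1}(1-x^{2s})^{-1}$), whereas the $\kappa_0$ side carries the real combinatorial work and uses the trick of first proving $\sum_{p+q=N}T'_{p,q}=\sum_{p+q=N}T^0_{p,q}$ and then deducing $T'_{p,q}=T^0_{p,q}$ termwise from the a priori inequality $T'_{p,q}\le T^0_{p,q}$.
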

\begin{thm}\label{thm-type DIII}
 Suppose that $( G,K)$ is of type {\rm DIII}.
Then
 \bern
&& \Char_{K}(\Lg_1)_{\kappa_0}=\left\{\on{IC}(\widecheck\cO_{1^{2k}_+1^{2k}_-\sqcup\mu},\cL_{\tau})\,|\,\mu\in \Lambda_{b}^{n-2k,n-2k},\,\tau\in\cP(k) \right\}\\
&& \on{Char}_K(\Lg_1)_{\kappa_1}=\left\{\on{IC}(\Lg_1^{rs},\cL_\sigma\otimes\bC_{\chi_1})\mid \sigma\in\cP_2({n/2})\right\}.
\eern
\end{thm}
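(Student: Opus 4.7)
The plan is to treat the two central-character components separately, using the decomposition $\on{Char}_K(\Lg_1) = \on{Char}_K(\Lg_1)_{\kappa_0} \sqcup \on{Char}_K(\Lg_1)_{\kappa_1}$ and arguing the two cases by quite different means.

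For the $\kappa_0$ part, I would invoke the identification $\on{Char}_K(\Lg_1)_{\kappa_0} = \on{Char}_{\bar K}(\Lg_1)$ recalled in the introduction, reducing the problem to classifying character sheaves for the symmetric pair $(SO_{2n}, GL_n)$. Applying the strategy of \cite{VX} to this pair, the supports that can occur are precisely the dual strata $\widecheck\cO_{k,\mu}$ to the orbits $\cO_{k,\mu} = \cO_{1^{2k}_+1^{2k}_-\sqcup\mu}$ built from Richardson orbits $\cO_\mu$ of $\theta$-stable Borel subgroups (which forces $\mu \in \Lambda_b^{n-2k,n-2k}$), and the associated local systems come from irreducible representations of $\pi_1^{\bar K}(\widecheck\cO_{k,\mu}) \cong B_{W_k}$ (see \eqref{explicit fundamental3}) that factor through the appropriate Hecke algebra quotient, yielding exactly the sheaves $\cL_\tau$ for $\tau \in \cP(k)$.

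For the $\kappa_1$ part, I would argue by a direct counting argument. By Corollary~\ref{corollary-Diii}, the set $\on{Char}_K(\Lg_1)_{\kappa_1}$ is empty when $n$ is odd (matching the statement, since the indexing set $\cP_2(n/2)$ is empty) and has cardinality $|\cP_2(n/2)|$ when $n$ is even. On the other hand, the nearby cycle construction of \S\ref{nearby-d3} yields $\mathfrak{F} P_{\chi_1} \cong \on{IC}(\Lg_1^{rs}, \cM_{\chi_1})$, where $\cM_{\chi_1}$ corresponds to the $\pi_1^K(\Lg_1^{rs})$-representation $\bC[W_{n/2}] \otimes \bC_{\chi_1}$. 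Decomposing this into irreducibles as in~\eqref{Diiin} produces the $|\cP_2(n/2)|$ pairwise-distinct full-support character sheaves $\on{IC}(\Lg_1^{rs}, \cL_\sigma \otimes \bC_{\chi_1})$ for $\sigma \in \cP_2(n/2)$, and by the cardinality count above these exhaust $\on{Char}_K(\Lg_1)_{\kappa_1}$.

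The main obstacle is in the $\kappa_0$ case: although the reduction to $(SO_{2n}, GL_n)$ is formal, actually invoking \cite{VX} requires matching combinatorial labels (the $\mu \in \Lambda_b^{n-2k,n-2k}$ indexing Richardson orbits, and the $\tau \in \cP(k)$ parametrising Hecke-algebra-factoring representations of $B_{W_k}$) to those used there, and verifying that parabolic induction from cuspidal character sheaves of $\theta$-stable Levi subgroups produces precisely the claimed list with no omissions or duplications. By contrast, the $\kappa_1$ part is almost automatic once Corollary~\ref{corollary-Diii} is in hand: every $\kappa_1$ character sheaf must have full support $\Lg_1^{rs}$ by the counting, and the nearby cycle construction supplies the explicit list directly.
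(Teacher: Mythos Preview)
Your proposal is correct and matches the paper's own argument essentially verbatim: the paper states that Theorem~\ref{thm-type DIII} follows from \cite[Theorem 6.2]{VX} (your $\kappa_0$ reduction to $(SO_{2n},GL_n)$), Corollary~\ref{corollary-Diii} (your cardinality count for $\kappa_1$), and~\eqref{Diiin} (your nearby-cycle construction of the full-support $\kappa_1$ sheaves). The ``main obstacle'' you flag is not a gap but precisely the content of \cite[Theorem 6.2]{VX}, which for type DIII applies directly since $\bar K\cong GL_n$ is already connected.
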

Here $\cL_\tau$ denotes the irreducible local system given by the irreducible representation $L_\tau$, where  $\pi_1^K(\widecheck\cO_{k,\mu})$ acts through  the action of $\pi_1^{\bar K}(\widecheck\cO_{k,\mu})=B_{W_k}$, and $\cL_\sigma\otimes\bC_{\chi_1}$ is defined in~\S\ref{nearby-d3}. 
Theorem~\ref{thm-type DIII} follows from~\cite[Theorem 6.2]{VX}, Corollary~\ref{corollary-Diii}, and~\eqref{Diiin}. We prove Theorem~\ref{thm-type BD} in the next section.

\begin{corollary}\label{coro-cuspidal}
 Theorem~\ref{theorem-cusp} from the introduction holds. 
Moreover, we have
\bern
&&\sum_n|\on{Char}^{\on{cusp}}_{K^{n\pm1,n}}(\Lg_1)_{\kappa_0}|x^n=\frac{1}{2}\prod_{s\geq 1}(1+x^{2s})^2(1+x^s)^2+\frac{3}{2}\prod_{s\geq 1}(1+x^{4s})(1+x^{2s})\\
&&\sum_{n \text{ odd}}|\on{Char}^{\on{cusp}}_{K^{n,n}}(\Lg_1)_{\kappa_0}|x^n=x\prod_{s\geq 1}(1+x^{4s})^4(1+x^{2s})^4\\
&&\sum_{n \text{ even}}|\on{Char}^{\on{cusp}}_{K^{n,n}}(\Lg_1)_{\kappa_0}|x^n=\frac{1}{4}\prod_{s\geq 1}(1+x^{4s-2})^4(1+x^{2s})^4+\frac{3}{2}\prod_{s\geq 1}(1+x^{4s-2})(1+x^{2s})
\\
&&|\on{Char}^{\on{cusp}}_{K^{m+\frac{t^2+t}{2},m+\frac{t^2-t}{2}}}(\Lg_1)_{\kappa_1}|=\text{coefficient of $x^m$ in }\eta_{m,t}\prod_{s\geq 1}(1+x^s) .
\eern
\end{corollary}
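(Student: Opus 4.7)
The plan is to combine the explicit classification in Theorem~\ref{thm-type BD} with a support analysis for parabolic induction, and then to perform a generating-function count against the descriptions of $\Theta_{n,t}^{\kappa_0}$ and $\Theta_{m,t}^{\kappa_1}$ already provided in Proposition~\ref{prop-full} and the corollary to Proposition~\ref{nearby-nontrivial}. The guiding principle is that parabolic induction from a $\theta$-stable proper Levi $L\subset P$ produces character sheaves whose support is an induced stratum; in the list of supports $\widecheck\cO_{1_+^m1_-^m2_+^k2_-^k\sqcup\mu}$ appearing in Theorem~\ref{thm-type BD}, the $2_+^k2_-^k$ blocks come from $\theta$-stable $GL$-type Levi factors, and the $\mu$-piece (or $\mu_t$-piece in the $\kappa_1$ case) comes from inducing from smaller Spin symmetric pairs of the same type. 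Consequently any character sheaf with $k>0$ or $\mu\neq\emptyset$ is parabolically induced, and what must be shown cuspidal is the complementary list.

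First I would treat the $\kappa_0$ case. Discarding supports with $k>0$ or $\mu\neq\emptyset$ forces the signed Young diagram $1_+^m1_-^m$ to equal $1_+^p1_-^q$, so $p=q=m$ and $\widecheck\cO=\Lg_1^{rs}$; the same argument in the $|t|=1$ split case forces $\mu=\emptyset$ after absorbing the single extra sign into the convention. This isolates the sheaves $\IC(\Lg_1^{rs},\cT_\psi)$, $\psi\in\Theta_{n,t}^{\kappa_0}$ with $|t|\leq 1$, and in particular shows that $\Char^{\rm cusp}_{K}(\Lg_1)_{\kappa_0}$ is empty unless $(G,K)$ is split. For $\kappa_1$, cuspidality similarly forces $k=0$, leaving the supports $\widecheck\cO_{1^m_+1^m_-\sqcup\mu_t}$ with local systems $\cF_\rho$, $\rho\in\Theta_{m,t}^{\kappa_1}$, for arbitrary $t$. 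I would then verify genuine cuspidality of the remaining candidates, by noting that $\Lg_1^{rs}$ (respectively the orbit $1^m_+1^m_-\sqcup\mu_t$) is not in the image of Richardson induction from any proper $\theta$-stable parabolic, together with the matching of local-system data: any summand of a parabolically induced sheaf would carry a $k>0$ or $\mu\neq\emptyset$ signature on the component group side, contradicting the structure of $\Theta_{n,t}^{\kappa_0}$ and $\Theta_{m,t}^{\kappa_1}$ described above.

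Granting the identification above, the four generating functions are purely combinatorial consequences. Set $a_r=|\on{Irr}\cH_{W_r,-1}|$ and $b_r=|\on{Irr}\cH_{W_r',-1}|$, so that by~\eqref{eqn-hecke1}--\eqref{eqn-heckeD}
\[
\textstyle\sum_r a_rx^r=\prod_{s\geq 1}(1+x^{2s})(1+x^s),\qquad \sum_r b_rx^r=\tfrac12\prod_{s\geq 1}(1+x^{2s-1})(1+x^s).
\]
For $|t|=1$, Proposition~\ref{prop-full} gives
\[
|\Char^{\rm cusp}_{K^{n\pm 1,n}}(\Lg_1)_{\kappa_0}|
=\sum_{0\leq k<n/2}a_ka_{n-k}+\bigl[\tbinom{a_{n/2}}{2}+2a_{n/2}\bigr]\cdot[n\ \text{even}],
\]
and the identity $2\sum_{k<n/2}a_ka_{n-k}+a_{n/2}^2=\sum_{k=0}^n a_ka_{n-k}$ converts this to $\tfrac12(\sum_r a_rx^r)^2+\tfrac32\sum_m a_mx^{2m}$, yielding the first claimed formula. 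The $t=0$, $n$ even/odd formulas follow identically from the description of $\Theta_{n,0}^{\kappa_0}$, now using $b_r$ in place of $a_r$ and accounting for the $\delta\in\{I,II\}$ doubling (and $\delta\in\{I,II,III,IV\}$ at $k=n/2$), together with the elementary identity $\prod(1+x^{2s-1})=\prod(1+x^s)/\prod(1+x^{2s})$. Finally for $\kappa_1$, Lemma~\ref{lemma-gnc}(ii) and the corollary to Proposition~\ref{nearby-nontrivial} give $|\Theta_{m,t}^{\kappa_1}|=\eta_{m,t}\cdot|\on{Irr}\cH_{S_m,-1}|$ for $m\geq 1$, so the formula follows from $\sum_m|\on{Irr}\cH_{S_m,-1}|x^m=\prod_{s\geq 1}(1+x^s)$, which is the standard identification of irreducible modules of the $-1$ Hecke algebra of type $A$ with partitions into distinct parts.

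The main obstacle will be the non-inducedness step: one must carefully track how the multiple $\delta$-labels ($V^\delta_{\rho\otimes\rho,\chi_{n/2}}$ with $\delta\in\{I,II,III,IV\}$, and the pairs of $\delta$'s together with the index $i\in\{1,2\}$ from Lemma~\ref{lemma-gnc}(ii) in the $\eta_{m,t}=4$ case) match, or fail to match, the summands of parabolic induction from each $\theta$-stable proper Levi. This is essentially a bookkeeping exercise comparing induced representations of Hecke algebras with parameters $\pm1$, attached to the component-group extensions $I_{m,t}$, and will need to be carried out case by case with the split/non-split and parity dichotomies of $N$, $t$, $m$.
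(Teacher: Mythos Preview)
Your generating-function bookkeeping is correct and matches the paper: the identities you derive for $|t|=1$ are exactly equation~\eqref{fn2B}, and for $t=0$ your computation recovers~\eqref{fn1D}, after which the paper splits into odd and even $n$ via the identity~\eqref{eqn-oeterms} (which is where your ``elementary identity'' for $\prod(1+x^{2s-1})$ enters). The $\kappa_1$ count is likewise the same as the paper's. So the counting half of the corollary is fine.

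The gap is in the cuspidality step. Your claim that ``$\Lg_1^{rs}$ is not in the image of Richardson induction from any proper $\theta$-stable parabolic'' is simply false: for the $\theta$-stable Borel, or indeed for any $\theta$-stable parabolic whose Levi has the form $GL_1^a\times SO_{N-2a}$ with the smaller orthogonal factor still split, one has $K.\Lp_1=\Lg_1$ and the induced sheaf has full support. So a support argument alone cannot separate the candidate cuspidals from induced sheaves, and your fallback (``a $k>0$ or $\mu\neq\emptyset$ signature on the component group side'') does not apply to these Levis either. The same issue arises for $\kappa_1$: inducing from $GL_1^a\times Spin_{N-2a}$ lands on the same stratum $\widecheck\cO_{1_+^m1_-^m\sqcup\mu_t}$, not a smaller one.

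The paper resolves this by an inductive argument on $N$ rather than a direct support check. For $\kappa_0$ it invokes \cite[Corollary~6.7]{VX2}. For $\kappa_1$ it argues as follows: any non-cuspidal sheaf is a summand of induction of a \emph{cuspidal} sheaf from a proper $\theta$-stable Levi; by induction on $N$, the only proper Levis carrying $\kappa_1$-cuspidals are those with $\pi(L)\cong GL_2^k\times SO_{N-4k}$, $k>0$ (in particular $GL_1$ factors are excluded because the $\kappa_1$ central character forces the $GL_2$-type cover); and for \emph{these} Levis one does have $K.\Lp_1\subsetneq\overline{\widecheck\cO_{m,t}}$. You also omit the type DIII check (the paper explicitly constructs the inducing data to show there are no $\kappa_1$-cuspidals in that case). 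What you call ``bookkeeping'' is really this inductive reduction to a restricted class of Levis, and it is the substantive content of the proof.
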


\begin{corollary}
\label{nil coro-2}
{\rm (i)} Assume that $(G,K)=(Spin_N,K^{p,q})$ and either $p$ or $q$ is even. We have
\beqn
\begin{gathered}
\on{Char}_{K}^{\mathrm{n}}(\Lg_1)_{\kappa_0}=\{\on{IC}(\cO_\mu,\cE_\phi)\,|\,\mu\in\Sigma_{b,1}^{p,q},\ \phi\in\Pi_{\cO_\mu}\}\cup\{\on{IC}(\cO_\mu^\delta,\cE_\phi)\,|\,\mu\in\Sigma_{b,2}^{p,q},\ \phi\in\Pi_{\cO_\mu},\,\delta={\rm I,II}\},
\end{gathered}
 \eeqn
where $\Pi_{\cO_\mu}$ is defined in~\eqref{char-bi}. Moreover, $|\on{Char}_{K^{q+t,q}}^{\mathrm{n}}(\Lg_1)_{\kappa_0}|=\text{coefficient of $x^q$ in }$
\bern
\begin{cases}\displaystyle{\frac{1}{2(1+x^{t})}\prod_{s\geq 1}\frac{(1+x^{2s-1})^2}{(1-x^{2s})^2}+\frac{3(1+x^{t})}{2(1+x^{2t})}\prod_{s\geq 1}\frac{1+x^{4s-2}}{(1-x^{2s})^2}}&\text{ if $t$ is odd}\\
\displaystyle{\frac{1}{2(1+x^{t})}\prod_{s\geq 1}\frac{(1+x^{2s})^2}{(1-x^{2s})^2}+\frac{3(1+x^{t})}{2(1+x^{2t})}\prod_{s\geq 1}\frac{1+x^{4s}}{(1-x^{2s})^2}}&\text{ if $t,q$ both even}\,.\end{cases}
\eern 
{\rm (ii)} We have
\beqn
\on{Char}_{K^{\frac{t^2+t}{2},\frac{t^2-t}{2}}}^{\mathrm{n}}(\Lg_1)_{\kappa_1}=\left\{\IC(\cO_{\mu_t},\cE_{\phi})\mid\phi\in\widehat{A_K(\cO_{\mu_t})}_{\kappa_1}\right\}.
\eeqn
Moreover, $|\on{Char}_{K^{\frac{t^2+t}{2},\frac{t^2-t}{2}}}^{\mathrm{n}}(\Lg_1)_{\kappa_1}|=\eta_{0,t}$.
\end{corollary}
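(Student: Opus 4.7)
The plan is to read off Corollary~\ref{nil coro-2} from Theorem~\ref{thm-type BD} by locating, among all character sheaves, those whose support is contained in $\cN_1$. In the parametrization of $\underline{\cN_1}^{\mathrm{cs},\kappa_i}$ a stratum $\widecheck\cO_{1^m_+1^m_-2^k_+2^k_-\sqcup\mu}$ is purely nilpotent exactly when $m=k=0$: the $1^m_+1^m_-$ and $2^k_+2^k_-$ parts enlarge the centralizer $\fa^{\phi}$ of the attached $\mathfrak{sl}_2$-triple and hence produce a positive-dimensional semisimple direction in $\widecheck\cO$, whereas for $\mu\in\Sigma_b$ (all parts odd with no gap) the triple is distinguished, $\fa^{\phi}=0$, and $\widecheck\cO_\mu=\cO_\mu$. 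The same reasoning applies to $\mu_t$ in part (ii).

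Granting this reduction, part (i) is immediate from Theorem~\ref{thm-type BD}(i): specializing to $m=k=0$ kills the representation data $\psi$ and the partition $\tau$, leaving $\IC(\cO_\mu,\cE_\phi)$ for $\mu\in\Sigma_{b,1}^{p,q}$ together with $\IC(\cO_\mu^\delta,\cE_\phi)$ for $\mu\in\Sigma_{b,2}^{p,q}$, $\delta\in\{\mathrm{I},\mathrm{II}\}$, and $\phi\in\Pi_{\cO_\mu}$. The hypothesis that $p$ or $q$ is even covers the $N$-odd and $N$-even inner cases and excludes only the outer $N$-even situation. Part (ii) is obtained analogously from Theorem~\ref{thm-type BD}(ii): with $m=k=0$ the signature forces $(p,q)=(\tfrac{t^2+t}{2},\tfrac{t^2-t}{2})$, only the orbit $\cO_{\mu_t}$ survives, and $|\widehat{A_K(\cO_{\mu_t})}_{\kappa_1}|=\eta_{0,t}$ is read off from Lemma~\ref{lemma-component gp}(iii).

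The remaining task, and the main obstacle, is the generating function for $|\on{Char}_{K^{q+t,q}}^{\mathrm{n}}(\Lg_1)_{\kappa_0}|$. Combining~\eqref{set of chars} with the factor $2$ coming from $\delta\in\{\mathrm{I},\mathrm{II}\}$ gives the weighted count
\[
\sum_{\mu\in\Sigma_{b,1}^{q+t,q}}2^{l_\mu-c}\;+\;\sum_{\mu\in\Sigma_{b,2}^{q+t,q}}2^{l_\mu-c+2},
\]
with $c=1$ if $N$ is odd and $c=2$ if $N$ is even. An element $\mu\in\Sigma_b^{q+t,q}$ is a sequence of odd parts $2\mu_i+1$ with signs $\epsilon_i$ obeying the parity conditions (a) and (b), and $l_\mu$ counts the ``break positions'' of $(\mu_i,\epsilon_i)$ together with the even-tail condition. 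Expanding $2^{l_\mu}$ as a sum over subsets of break positions converts the double sum into an Euler product over row lengths $s\geq 1$, with each row contributing $1+x^{2s-1}$ in the $t$-odd regime or $1+x^{2s}$ in the $t,q$-even regime, while the factor $(1-x^{2s})^{-2}$ records the pairing of like parts forced by conditions (a), (b). The decomposition into the two displayed terms mirrors the dichotomy $\Sigma_{b,1}$ versus $\Sigma_{b,2}$, and the global prefactor $\tfrac{1}{1+x^{t}}$ (respectively $\tfrac{3(1+x^{t})}{2(1+x^{2t})}$) arises by sieving for the signature constraint $p-q=t$ inside each of the two families. The delicate step will be accurately interleaving the global parity conditions (a), (b) with the $a_\mu,b_\mu$-dichotomy separating $\Sigma_{b,1}$ from $\Sigma_{b,2}$ so that the two sums recombine precisely into the advertised closed form; this should amount to a finite number of Jacobi-triple-product style manipulations once the Euler product has been assembled.
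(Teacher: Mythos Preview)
Your reduction to $m=k=0$ and the resulting description of the sets in parts (i) and (ii) is exactly what the paper does: this part follows immediately from Theorem~\ref{thm-type BD}, and your reading of $\eta_{0,t}$ from Lemma~\ref{lemma-component gp}(iii) is correct.

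For the generating function in part (i), however, your proposed route diverges from the paper's and your sketch misidentifies the structure. You claim the two displayed terms mirror the dichotomy $\Sigma_{b,1}$ versus $\Sigma_{b,2}$; this is not so. By~\eqref{set of chars} and~\eqref{tbpq} the count is $b_{q+t,q}^1+2b_{q+t,q}^2=\tfrac{1}{2}\tilde b_{q+t,q}+\tfrac{3}{2}b_{q+t,q}^2$, so the first displayed term is $\tfrac{1}{2}\tilde b_{q+t,q}$ (which mixes $\Sigma_{b,1}$ and $\Sigma_{b,2}$) and the second is $\tfrac{3}{2}b_{q+t,q}^2$. The prefactor $\tfrac{1}{2(1+x^t)}$ is simply half of the known generating function~\eqref{tb1} for $\tilde b_{q+t,q}$ taken from~\cite{VX}; it is not obtained by any sieving argument on $\Sigma_{b,1}$.

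The paper does not attempt your direct Euler-product expansion for the fixed-$t$ quantity $b_{q+t,q}^2$ at all. Instead it runs the argument of \S\ref{ssec-number} \emph{backwards}: since Theorem~\ref{thm-type BD} gives $T'_{q+t,q}=T^0_{q+t,q}$, and $T^0_{q+t,q}$ is known from Proposition~\ref{number1} while $\tilde b_{q+t,q}$ is known from~\eqref{tb1} and the $f_{m,\varepsilon_t}^i$ are known from~\eqref{fn2B}--\eqref{fn3D}, one can solve the convolution identity~\eqref{T1} for the remaining unknown $\sum_q b_{q+t,q}^2x^q$. This yields the stated closed form for $b_{q+t,q}^2$ without any new combinatorics, after which $\tfrac{1}{2}\tilde b+\tfrac{3}{2}b^2$ gives the answer. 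Your direct approach could in principle be made to work (the paper does carry out an analogous direct computation for the \emph{summed} quantity $b_N^2$ in the Lemma giving~\eqref{eqn-bio}, via $q$-Gauss), but the fixed-$t$ refinement is more delicate than your sketch suggests, and the paper deliberately avoids it by leveraging Proposition~\ref{number1}.
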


\section{Proof of the main theorem~\ref{thm-type BD} and corollaries}\label{sec-proof}

In this section we assume that $(G,K)$ is of type BDI unless otherwise stated. We prove  Theorem~\ref{thm-type BD} and its corollaries. To prove Theorem~\ref{thm-type BD},  as in~\cite{VX}, we show that the sheaves in the theorem are indeed character sheaves by constructing them using parabolic induction. We then conclude by showing that the number of the  sheaves we have constructed coincides with the number of  character sheaves.

\subsection{Parabolic induction} 
In this subsection we study parabolic inductions of character sheaves in $\on{Char}_{(L^\theta)^0}(\Ll_1)$ for a family of $\theta$-stable Levi subgroups $L$ contained in $\theta$-stable parabolic subgroups.

We first consider sheaves in $\Char_{K}(\Lg_1)_{\kappa_0}$, identified with $\Char_{\bar K}(\Lg_1)$. We will work in the setting of $(\bar G,\bar K)=(SO_N, SO_p\times SO_q)$.  We follow~\cite[\S7.2, \S7.3]{VX}. Recall  the family of $\theta$-stable parabolic subgroups $P_{m,k,\mu}$ together with their $\theta$-stable Levi subgroups $L_{m,k,\mu}$ such that
$
\bar K.(\Lp_{m,k,\mu})_1=\overline{\widecheck\cO}_{m,k,\mu},\ \cO_{m,k,\mu}\in\underline{\cN_1^{\text{cs},\kappa_0}}
$
,
$L_{m,k,\mu}\cong SO_{2m+\varepsilon_N}\times GL_{2k}\times GL_1^{n-m-2k},$ and  
$(L_{m,k,\mu}^\theta)^0\cong SO_{m+\varepsilon_N}\times SO_{m}\times GL_k\times GL_k\times GL_1^{n-m-2k}$, where $\varepsilon_N=1$ (resp. $0$) if $N$ is odd (resp. even). Note that when $m=0$ and $\mu\in\Sigma_2$ (resp. $\mu=\emptyset$), the $\tilde{K}$-conjugacy class of $P_{0,k,\mu}$ in $G/P$ decomposes into $2$ (resp. $4$) $\bar K$-conjugacy classes.

Assume that either $m\neq 0$ or $k\neq 0$.
In what follows we write $ L=L_{m,k,\mu}$, $P=P_{m,k,\mu}$, and $\overline{\widecheck\cO}=\bar K.\Lp_1$ etc. We describe the  equivariant fundamental groups $\pi_1^{P_{\bar K}}(\Lp_1^r)$, $\pi_1^{\bar K}(\widecheck\cO)$ and $\pi_1^{(L^\theta)^0}(\Ll_1^{rs})$, where 
$
P_{\bar K}=P\cap \bar K,\ \  \Lp_1^r=\Lp_1\cap{\widecheck\cO},
$
and $\Ll_1^{rs}$ is the set of regular semisimple elements of $\Ll_1$ with respect to the symmetric pair $(L,(L^\theta)^0)$.
We have
 \begin{eqnarray*}
  \pi_1^{(L^\theta)^0}(\Ll_1^{rs})\cong {}_0\widetilde{B}_{W_m}\text{ (resp. ${}_0\widetilde{B}_{W_m'}$)}\times B_{W_k}, && \text{$N$ odd (resp. even)}.
 \end{eqnarray*}
Consider the  the map $\pi:\bar K\times ^{P_{\bar K}}\Lp_1\to\overline{\widecheck\cO}$.
 Let $x\in\widecheck\cO$.  Using similar argument as in ~\cite[Lemma 7.1]{VX}, one can show that $\pi^{-1}(x)$ has $2^{l_\mu}$ irreducible components, and the $A_K(x)$-action on the set of irreducible components is transitive if $N$ is odd, and has two orbits if $N$ is even. Moreover, let $s_\mu=r_\mu-l_\mu+1$ (resp. $r_\mu-l_\mu$) if $N$ is odd and $\mu\in\Sigma_1$ (resp. $\Sigma_2$), and $s_\mu=r_\mu-l_\mu+2$ (resp. $r_\mu-l_\mu+1$) if $N$ is even and $\mu\in\Sigma_1$ (resp. $\Sigma_2$). Then
\begin{subequations}
\begin{eqnarray}\label{equivpi1-p1}
&\pi_1^{P_{\bar{K}}}(\Lp_1^r)\cong\pi_1^{\bar K}(\widecheck\cO)\cong{}_0\widetilde{B}_{W_m'}\times \widetilde{B}_{W_k}^1&\text{ if $\mu=\emptyset$}
\\
\label{equivpi1-p2}
&\pi_1^{P_{\bar{K}}}(\Lp_1^r)\cong {}_0\widetilde{B}_{W_m}\text{ (resp. ${}_0\widetilde{B}^0_{W_m}$)}\times\widetilde{B}_{W_k}^1\times (\bZ/2\bZ)^{s_\mu}&\text{$N$ odd (resp. even)}
\end{eqnarray}
\end{subequations}
where ${}_0\widetilde{B}_{W_m}^0=(\bZ/2\bZ)^{m-1}\rtimes B_{W_m}^0$ and $B_{W_m}^0$ is the subgroup of $B_{W_m}$ defined  as 
the kernel of the map $B_{W_m}\to\bZ/2\bZ$ sending  a word in the braid generators $\sigma_1,\ldots,\sigma_m$ ($\sigma_m$ is the special one) to 0 (resp. 1) if the braid generator $\sigma_m$ appears even (resp. odd) number of times.
Furthermore, the factor  $(\bZ/2\bZ)^{s_\mu-l_\mu}$  is given by $\{a\in A_{\bar K'}(\cO_\mu)\,|\,\phi(a)=1\text{ for all }\phi\in\Pi_{\cO_\mu}\}$.

We consider the parabolic induction of full support character sheaves on $\Ll_1$. For each irreducible representation $\psi$ of $\pi_1^{(L^\theta)^0}(\Ll_1^{rs})$, we write $\psi$ also for the irreducible representation of $\pi_1^{P_{\bar K}}(\Lp_1^r)$ obtained via pull-back from the surjective map $\Phi:\pi_1^{P_{\bar K}}(\Lp_1^r)\to \pi_1^{(L^\theta)^0}(\Ll_1^{rs})$. Note that when $N$ even and $\mu\neq\emptyset$, the map $\Phi$  gives rise to a surjective map
$\zeta:{}_0\widetilde{B}^0_{W_m}\twoheadrightarrow{}_0\widetilde{B}_{W_m'}.$ 
As in~\cite{VX}, one can check that  $\Theta_{n,|t|}^{\kappa_0,1}$ (resp. $\Theta_{n,
|t|}^{\kappa_0,2}$) coincides with the set of non-isomorphic simple $\bC[\widetilde B_{W_n}^1]$-modules (resp. $\bC[{}_0\widetilde B_{W_n}]$-modules) that arise as a direct summand of $\bC[{\widetilde{B}^1_{W_n}}]\otimes_{\bC[_0{\widetilde{B}_{W_n}}]}\psi$ (resp. $\bC[{}_0{\widetilde{B}_{W_n}}]\otimes_{\bC[_0{\widetilde{B}_{W_n}}^0]}\psi)$, $\psi\in\Theta_{n,|t|}^{\kappa_0}$, $|t|\leq 1$.
 The following claims then follow from the description of $\pi_1^{\bar K}(\widecheck\cO)$ in~\eqref{explicit fundamental1}, the description of $\pi_1^{P_{\bar K}}(\Lp_1^r)$ in~\eqref{equivpi1-p1}-\eqref{equivpi1-p2}.

 Let $\cL_\rho\boxtimes\cL_\tau$, $\rho\in\Theta_{m,1}^{\kappa_0}$ (resp. $\Theta_{m,0}^{\kappa_0}$), and $\tau\in\cP(k)$, be the  irreducible $(L^\theta)^0$-equivairant local system on $\Ll_1^{rs}$ given by the irreducible representation $\rho\boxtimes L_\tau$ of $\pi_1^{(L^\theta)^0}(\Ll_1^{rs})\cong {}_0\widetilde{B}_{W_m}\times B_{W_k}$ \text{(resp. ${}_0\widetilde{B}_{W_m'}\times B_{W_k}$)}, where ${}_0\widetilde{B}_{W_m}$ (resp. ${}_0\widetilde{B}_{W_m'}$) acts via $\rho$ and $B_{W_k}$ acts via $L_\tau$.  Then
 \bern
&&\bigoplus_{\phi\in\Pi_{\cO_\mu}}\on{IC}(\widecheck\cO,\oplus_{\psi\in\Theta_{m,1}^{\kappa_0,i}}\cT_{\psi,\tau,\phi})[-]\ \dsum\  \on{Ind}_{\Ll_1\subset\fp_1}^{\Lg_1}\on{IC}(\Ll_1^{rs},\oplus_{\rho\in\Theta_{m,1}^{\kappa_0}}\cL_\rho\boxtimes\cL_\tau),\,\mu\in\Sigma_i,i=1,2,
 \\
&&\on{IC}(\widecheck\cO_{m,k,\emptyset},\cT_{\rho,\tau})[-]\ \dsum\  \on{Ind}_{\Ll_1\subset\fp_1}^{\Lg_1}\on{IC}(\Ll_1^{rs},\cL_\rho\boxtimes\cL_\tau),\text{ when $m>0$},\\
&&\on{IC}(\widecheck\cO_{0,n/2,\emptyset}^\delta,\cT_{\tau})[-]\ \dsum\ \on{Ind}_{\Ll_1^\delta\subset\fp_1^\delta}^{\Lg_1}\on{IC}((\Ll_1^\delta)^{rs},\cL_\tau), \ \delta=\mathrm{\rm{I},\rm{II,III,IV}}.
 \eern
 
We consider next the sheaves in $\Char_K(\Lg_1)_{\kappa_1}$. Let $P$ be a $\theta$-stable subgroup such that $\pi(P)$ stabilises the flag $0\subset V_{2k}\subset V_{2k}^\p\subset V$, where $V_{2k}=\on{span}\{e_i,f_i,i=1,\ldots,k\}$. Let $L\subset P$ be the $\theta$-stable Levi subgroup such that $\pi(L)\cong GL(V_{2k})\oplus SO(V_{2k}^\p/V_{2k})$. We have
\beqn
\pi(L^\theta)^0\cong GL_k\times GL_k\times SO_{p-2k}\times SO_{q-2k}.
\eeqn
Consider the stratum $\widecheck\cO^{\Ll_1}:=\widecheck\cO^{\Ll_1}_{1^k_+1^k_-\boxtimes 1^m_+1^m_-\sqcup{\mu_t}}$ in $\Ll_1$. We have
\beqn
\pi_1^{L^\theta}(\widecheck\cO^{\Ll_1})\cong B_{W_k}\times\bZ/2\bZ\times \widetilde B_{m,t}.
\eeqn
For each $\sigma\in\cP_2(k)$ and $\rho\in\Theta_{m,t}^{\kappa_1}$, let $\cL_{\rho,\sigma}$ denote the local system corresponding to the $\pi_1^{L^\theta}(\widecheck\cO^{\Ll_1})$-representation where $B_{W_k}$ acts via $L_\sigma$ of $W_k$, $\bZ/2\bZ$ acts via the non-trivial character and $\widetilde B_{m,t}$ acts via $\rho$. Consider the IC sheaf $\IC(\widecheck\cO^{\Ll_1},\cL_{\rho,\sigma})$. This is a character sheave on $\Ll_1$. Note that the image of $K\times^{P_K}(\overline{\widecheck\cO^{\Ll_1}}+(\Ln_P)_1)$ under the map $\pi:K\times^{P_K}\Lp_1\to K.\Lp_1$ is $\overline{\widecheck\cO_{1^m_+1^m_-2^k_+2^k_-\sqcup\mu_t}}$. The same argument as before shows that
\beqn
\IC(\widecheck\cO_{1^m_+1^m_-2^k_+2^k_-\sqcup\mu_t},\cF_{\rho,\sigma})\ \dsum\  \on{Ind}_{\Ll_1\subset\Lp_1}^{\Lg_1}\IC(\widecheck\cO^{\Ll_1},\cL_{\rho,\sigma}).
\eeqn

Since Fourier transform commutes with parabolic induction, we conclude that  the IC sheaves in Theorem~\ref{thm-type BD} are character sheaves. Since they are pairwise non-isomorphic,  it remains to check that the number of the IC sheaves equals the number of character sheaves. This will be done in the next subsections.

\subsection{The number of character sheaves}

In this subsection, we determine the numbers  $|\on{Char}_K(\Lg_1)_{\kappa_0}|$ and $|\on{Char}_K(\Lg_1)_{\kappa_1}|$ of character sheaves. 
\begin{proposition} \label{number1}Suppose that $( G, K)=(Spin_N,K^{q+t,q})$. We have that
\ber
\label{numbertrivial}&&|\on{Char}_K(\Lg_1)_{\kappa_0}|=\text{ coefficient of $x^q$ in}\\
&&\qquad\qquad\frac{1}{2(1+x^{t})}\prod_{s\geq 1}\frac{1+x^s}{(1-x^s)^3}+\frac{3(1+x^{t})}{2(1+x^{2t})}\prod_{s\geq 1}\frac{(1+x^{2s})^2}{(1-x^{2s})^3}+\frac{9}{4}\delta_{t,0}\prod_{s\geq1}\frac{1}{1-x^{2s}}\,\nonumber
\\
\label{numberntrivial}&&\text{$|\on{Char}_K(\Lg_1)_{\kappa_1}|=$  coefficient of $x^{N-t^2}$ in }\ \eta_{\frac{N-t^2}{2},t}\prod_{s\geq 1}\frac{1}{(1-x^{4s})(1-x^{2s})}.
\eer
\end{proposition}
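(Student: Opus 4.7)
The plan is to reduce counting character sheaves to a sum over nilpotent orbits via Fourier duality: since the Fourier transform is a bijection between equivariant perverse sheaves, one has $|\on{Char}_K(\Lg_1)_{\kappa_i}| = |\cA_K(\Lg_1)_{\kappa_i}| = \sum_{\cO}|\widehat{A_K(\cO)}_{\kappa_i}|$, summed over nilpotent $K$-orbits in $\cN_1$.  I will evaluate this sum using Lemma~\ref{lemma-component gp} and the orbit parametrization of~\S\ref{sec-0}, then repackage the result as a generating function in $x$ whose coefficient of $x^q$ gives the claim.

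For $\kappa_0$, Lemma~\ref{lemma-component gp}(i) gives $|\widehat{A_K(\cO)}_{\kappa_0}| = |\widehat{A_{\bar K}(\cO)}| = 2^{r_\lambda}$.  Accounting for the $1$, $2$, $4$ orbits per $\lambda$ in $\Sigma_1,\Sigma_2,\Sigma_3$ respectively, the count becomes
\beqn
|\on{Char}_K(\Lg_1)_{\kappa_0}| = \sum_{\lambda\in\Sigma_1^{p,q}}2^{a_\lambda+b_\lambda-2}+2\sum_{\lambda\in\Sigma_2^{p,q}}2^{a_\lambda+b_\lambda-1}+4|\Sigma_3^{p,q}|.
\eeqn
I encode signed Young diagrams via a bivariate generating function in $x,y$ tracking the total contributions to $p$ and $q$.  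Decomposing $\lambda=\lambda_{\rm even}\sqcup\lambda_{\rm odd}$, with the even part labelled by an arbitrary partition (since the constraint $p_i=q_i$ forces $(p_i,q_i)=(m_i,m_i)$) and the odd part a collection of rows of odd length with free sign splits, a direct computation gives
\beqn
\sum_{\lambda\in\Sigma}2^{a_\lambda+b_\lambda}\,x^{p}y^{q}
=\prod_{s\ge1}\frac{1}{1-(xy)^{2s}}\cdot\prod_{j\ge0}\frac{(1+x^{2j+1})(1+y^{2j+1})}{(1-x^{2j+1})(1-y^{2j+1})}.
\eeqn
The three terms in~\eqref{numbertrivial} arise from separating this sum according to whether $\lambda$ lies in $\Sigma_1$, $\Sigma_2$, or $\Sigma_3$; the prefactors $(1+x^t)^{-1}$ and $(1+x^{2t})^{-1}$ encode the restriction to $p-q=t$ via an averaging of the sign-twist $y\mapsto -y$, and the $\tfrac{9}{4}\delta_{t,0}$ term records the exceptional contribution of $\Sigma_3$, which is nonempty only at $p=q$ (all parts even).

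For $\kappa_1$, Lemma~\ref{lemma-component gp}(ii) restricts attention to $\lambda$ with each odd $\lambda_i$ satisfying $p_i,q_i\le 1$.  A case analysis across the three families of Lemma~\ref{lemma-component gp}(iii) ($N$ odd; $N$ even with $\theta$ outer; $N$ even with $\theta$ inner) shows that, after multiplying by the orbit multiplicities on $\Sigma_1,\Sigma_2,\Sigma_3$, the per-$\lambda$ contribution collapses to the uniform value $\eta_{m,t}$ from~\eqref{eqn-mpq} (e.g.\ for $N$ even inner, the contributions per $\lambda$ in $\Sigma_1,\Sigma_2,\Sigma_3$ are $1\cdot 4$, $2\cdot 2$, $4\cdot 1$, all equal to $4$).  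Under the constraint, a signed Young diagram decomposes uniquely into (i) an arbitrary partition for the even part contributing $\prod_{s\ge 1}(1-x^{4s})^{-1}$, and (ii) a collection of mult-$1$ and mult-$2$ odd rows whose generating function, after imposing $p-q=t$ with $N-t^2=2m$, collapses to $\prod_{s\ge 1}(1-x^{2s})^{-1}$, yielding~\eqref{numberntrivial}.

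The principal technical obstacle will be the generating-function manipulations for $\kappa_0$: collapsing the weighted bivariate sum over $\Sigma$ into the closed form~\eqref{numbertrivial} requires careful handling of the parity relations between $a_\lambda$, $b_\lambda$, $N$, and $t$, and the emergence of the specific prefactors $\tfrac{1}{2(1+x^t)}$ and $\tfrac{3(1+x^t)}{2(1+x^{2t})}$ relies on a nontrivial reorganization using Euler-type identities such as $\prod_{s\ge 1}(1+x^s)=\prod_{s\ge 1}(1-x^{2s-1})^{-1}$.
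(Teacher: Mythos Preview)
Your overall strategy matches the paper's: reduce to $|\cA_K(\Lg_1)_{\kappa_i}|$ via Fourier transform and sum $|\widehat{A_K(\cO)}_{\kappa_i}|$ over nilpotent orbits using Lemma~\ref{lemma-component gp}. For $\kappa_1$ your argument is essentially the paper's, with the uniform-$\eta_{m,t}$ observation correct; the step where the odd-part generating function ``collapses to $\prod(1-x^{2s})^{-1}$'' is precisely where the paper invokes the Jacobi triple product identity $\sum_{t_1-t_2=t}x^{2t_1^2-t_1+2t_2^2-t_2}=x^{t^2}\prod_{s\geq1}(1-x^{4s})(1+x^{2s})$, which you should name rather than assert.

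For $\kappa_0$ the paper takes a different and shorter route: it writes $|\on{Char}_{\bar K}(\Lg_1)|=\tfrac12|\on{Char}_{\widetilde K}(\Lg_1)|+\tfrac32\sum_{\Sigma_2\cup\Sigma_3}2^{r_\lambda}+\tfrac32\mathbf{p}(N/4)\delta_{t,0}$, imports the first term from~\cite[Cor.~8.6]{VX}, and computes the middle sum via a separate bivariate generating function (Lemma~\ref{lemma-n1}). Your direct approach can in principle work, but two points in your sketch are wrong. First, the displayed bivariate formula is incorrect: a row of odd length $2j+1$ with sign~$+$ contributes $x^{j+1}y^{j}$ to the signature, not $x^{2j+1}$, so the odd-part factor for the value $2j+1$ is $\frac{(1+x^{j+1}y^j)(1+x^jy^{j+1})}{(1-x^{j+1}y^j)(1-x^jy^{j+1})}$, not $\frac{(1+x^{2j+1})(1+y^{2j+1})}{(1-x^{2j+1})(1-y^{2j+1})}$ (these differ already at $j=1$). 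Second, the three terms in~\eqref{numbertrivial} do \emph{not} correspond to $\Sigma_1,\Sigma_2,\Sigma_3$: in the paper's derivation the first term is $\tfrac12|\on{Char}_{\widetilde K}|$ (which runs over all of $\Sigma$), the second is $\tfrac32\sum_{\Sigma_2\cup\Sigma_3}2^{r_\lambda}$, and the third collects the $\Sigma_3$ residual. Also, extracting fixed $p-q=t$ is done by picking out the $u^{m+t}v^m$ diagonal and setting $u=v=x$ (as in Lemma~\ref{lemma-n1}), not by a sign twist $y\mapsto -y$. With the corrected bivariate factors your direct route would require reproving the content of both~\cite[Cor.~8.6]{VX} and Lemma~\ref{lemma-n1}; the paper's comparison with $\widetilde K$ buys you the first of these for free.
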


\begin{lemma}\label{lemma-n1}We have
\beqn
\sum_{\lambda\in\Sigma_2^{q+t,q}\cup\Sigma_3^{q+t,q}}2^{r_\lambda}=\text{ coefficient of $x^q$ in }\frac{1+x^{t}}{1+x^{2t}}\prod_{s\geq 1}\frac{(1+x^{2s})^2}{(1-x^{2s})^3}.
\eeqn

\end{lemma}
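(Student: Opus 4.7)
The plan is to compute the left-hand side as the coefficient of $y^t$ in a two-variable generating function, where $y$ will track the signature difference $t$. First, I decompose each $\lambda \in \Sigma_2 \cup \Sigma_3$ into its even-part sub-diagram (where $p_i = q_i$ is forced, so the contribution to $p-q$ vanishes) and its odd-part sub-diagram. The constraint $a_\lambda b_\lambda = 0$ forces the sign pattern on odd parts: in the case $a_\lambda = 0$, every row of length $m \equiv 1\pmod 4$ must start with $-$ and every row of length $m \equiv 3 \pmod 4$ with $+$; the case $b_\lambda = 0$ is opposite, and $\Sigma_3$ corresponds to having no odd parts at all. Tracking $q$ by $x$ (using the standard alternating-sign convention, so that a row of odd length $m$ beginning with $\pm$ contributes $(\pm 1, \lfloor m/2\rfloor)$ to $(t,q)$) and noting $r_\lambda = d(\alpha) - 1$ on $\Sigma_2$ with $\alpha$ the odd sub-partition and $d(\alpha)$ its number of distinct parts, a direct expansion yields
\[
\sum_{q, t} \sum_{\lambda \in \Sigma_2^{q+t, q}\cup \Sigma_3^{q+t, q}} 2^{r_\lambda}\, x^q y^t = \prod_{s\geq 1}\frac{1}{1-x^{2s}}\cdot\frac{A(x,y)+B(x,y)}{2},
\]
where $A(x,y) = \prod_{k\geq 1\text{ odd}} \frac{(1+x^k y)(1+x^k/y)}{(1-x^k y)(1-x^k/y)}$ (the $a_\lambda = 0$ generating function) and $B(x,y)$ is its analogue with the roles of $\equiv 1$ and $\equiv 3\pmod 4$ exchanged.

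A shift of summation index in the Jacobi triple product (equivalently, the elementary identity $\sum_n y^n x^{n(n+1)} = y^{-1} \sum_n y^n x^{n^2-n}$) shows the key structural relation $B(x,y) = A(x, y/x)$. Consequently
\[
[y^t]\,\tfrac{1}{2}(A+B) = \tfrac{1}{2}(1 + x^{-t})\,a_t(x), \qquad a_t(x) := [y^t]A(x,y).
\]

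The heart of the argument is then the classical Fourier expansion of the theta quotient $A = \theta_3(z;x)/\theta_4(z;x)$ (with $y = e^{2iz}$):
\[
a_t(x) = \frac{2x^{|t|}}{1+x^{2|t|}}\prod_{s\geq 1}\frac{(1+x^{2s})^2}{(1-x^{2s})^2} \qquad (t \neq 0),
\]
with $a_0(x)$ equal to the product alone. I would derive this by applying Jacobi's triple product to write $A = \Phi(x,y)/\Phi(x,-y)$ where $\Phi(x,y) = \sum_{n\in\mathbb Z} y^n x^{n^2}$, then extracting Fourier coefficients via the Lambert-type identity $1 + 4 \sum_{n\geq 1} x^n/(1+x^{2n}) = \prod_{s\geq 1}(1+x^{2s})^2/(1-x^{2s})^2$ (a classical theta identity) which pins down the constant term, together with the $y \leftrightarrow y^{-1}$ symmetry and the recursion on $a_t$ induced by the Lambert expansion of $\log A$.

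Combining the pieces, multiplying by the even-part factor $\prod(1-x^{2s})^{-1}$, and simplifying the product $\tfrac{1}{2}(1 + x^{-t}) \cdot 2x^{|t|}/(1+x^{2|t|}) = (1+x^{t})/(1+x^{2t})$, yields the right-hand side of the lemma. The main obstacle is establishing the theta identity for $a_t$ rigorously as a formal $q$-series identity: the natural ``functional'' derivation via quasi-periodicity $\Phi(x, x^2 y) = (xy)^{-1}\Phi(x,y)$ does not pass directly to the formal Laurent ring, so one must instead argue through the Lambert/log expansion or cite the classical $\theta_3/\theta_4$ Fourier expansion directly. Everything else is bookkeeping.
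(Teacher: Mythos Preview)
Your approach is correct and reaches the same conclusion, but by a genuinely different route from the paper. The paper works with a two-variable generating function in $(u,v)$ tracking $(p,q)$, applies the $q$-binomial expansion $\prod_{n\ge 0}\frac{1+aq^n}{1-aq^n}=\sum_{n\ge 0}\frac{(-1;q)_n}{(q;q)_n}a^n$ to each infinite product separately, picks out the terms $u^{m+t}v^m$, and then specialises $u=v=x$. You instead track $(q,t)$ directly with $(x,y)$, recognise $A(x,y)=\theta_3(z;x)/\theta_4(z;x)$ (with $y=e^{2iz}$), and invoke its classical Fourier expansion---equivalently the Fourier series of Jacobi's $\operatorname{dn}$ function, which gives exactly $a_t(x)=\frac{2x^{|t|}}{1+x^{2|t|}}\,\theta_4(0;x^2)^{-2}=\frac{2x^{|t|}}{1+x^{2|t|}}\prod_{s\ge 1}\frac{(1+x^{2s})^2}{(1-x^{2s})^2}$. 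Your method is more conceptual and explains structurally why the answer has the shape $\frac{1+x^t}{1+x^{2t}}\times(\text{product})$; the paper's method is more self-contained, needing only elementary $q$-series manipulations rather than a cited elliptic-function identity.

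Two minor remarks. First, the relation $B(x,y)=A(x,y/x)$ follows instantly from the product form (write $k=2j+1$ and substitute $y\mapsto y/x$), so the appeal to the Jacobi triple product there is unnecessary. Second, your parenthetical that a row of odd length $m$ beginning with $-$ contributes $\lfloor m/2\rfloor$ to $q$ is off by one (it contributes $\lceil m/2\rceil$); your product formula for $A$ is nonetheless correct, since in the $a_\lambda=0$ case the parts $4j+1$ (starting $-$) and $4j+3$ (starting $+$) both contribute $x^{2j+1}$, matching $\prod_{k\text{ odd}}\frac{(1+x^ky)(1+x^k/y)}{(1-x^ky)(1-x^k/y)}$.
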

\begin{proof}
Mimicking the proof of~\cite[Proposition C.2]{VX}, we see that the 2-variable generating function of $\sum_{\lambda\in\Sigma_2\cup\Sigma_3}2^{r_\lambda}$ is
\beq\label{eqn1-pf}
\begin{gathered}
\prod_{m\geq 0}\frac{1+u^{2m+2}v^{2m+1}}{1-u^{2m+2}v^{2m+1}}\prod_{m\geq 0}\frac{1+u^{2m}v^{2m+1}}{1-u^{2m}v^{2m+1}}\prod_{m\geq 1}\frac{1}{1-u^{2m}v^{2m}}\\
+\prod_{m\geq 0}\frac{1+u^{2m+1}v^{2m+2}}{1-u^{2m+1}v^{2m+2}}\prod_{m\geq 0}\frac{1+u^{2m+1}v^{2m}}{1-u^{2m+1}v^{2m}}\prod_{m\geq 1}\frac{1}{1-u^{2m}v^{2m}}.
\end{gathered}
\eeq
The first term of~\eqref{eqn1-pf} equals
\beqn
\frac{1}{(u^2v^2;u^2v^2)_\infty}\sum_{n}\frac{(-1;u^2v^2)_n}{(u^2v^2;u^2v^2)_n}(u^2v)^n\sum_{n}\frac{(-1;u^2v^2)_n}{(u^2v^2;u^2v^2)_n}v^n.
\eeqn
Picking out the terms of the form $u^{m+t}v^m$ we get
\beqn
\frac{1}{(u^2v^2;u^2v^2)_\infty}\sum_{n}\frac{(-1;u^2v^2)_{n+t}}{(u^2v^2;u^2v^2)_n}\frac{(-1;u^2v^2)_{n+t}}{(u^2v^2;u^2v^2)_n}u^{2n+2t}v^{2n+t}.
\eeqn
Setting $u=v=x$, we get
$
\displaystyle{\frac{2x^{3t}}{1+x^{4t}}\frac{(-x^4;x^4)_\infty^2}{(x^4;x^4)_\infty^3}}.
$
Similarly the second term of~\eqref{eqn1-pf} gives us 
$
\displaystyle{\frac{2x^{t}}{1+x^{4t}}\frac{(-x^4;x^4)_\infty^2}{(x^4;x^4)_\infty^3}}.
$
Thus we have
\beqn
\sum_{q}\left(2\sum_{\lambda\in\Sigma_2^{q+t,q}\cup\Sigma_3^{q+t,q}}2^{r_\lambda}\right)x^{2q+t}=\frac{2(x^{3t}+x^t)}{1+x^{4t}}\frac{(-x^4;x^4)_\infty^2}{(x^4;x^4)_\infty^3}
\eeqn
and the lemma follows.
\end{proof}

\begin{proof}[Proof of Proposition~\ref{number1}]Note that
$
|\on{Char}_K(\Lg_1)_{\kappa_i}|=|\cA_K(\Lg_1)_{\kappa_i}|$, $i=0,1$, $|\cA_K(\Lg_1)_{\kappa_0}|=|\cA_{\bar K}(\Lg_1)|$ and
 that
$
|A_{\bar K}(x_\lambda)|=|A_{\widetilde K}(x_\lambda)|/2$, if $\lambda\in\Sigma_1$, $ 
|A_{\bar K}(x_\lambda)|=|A_{\widetilde K}(x_\lambda)|$,  if $\lambda\notin\Sigma_1.
$
Thus
\bern
&&|\Char_{\bar K}(\Lg_1)|=\sum_{\lambda\in\Sigma_1^{q+t,q}}2^{r_\lambda}+2\sum_{\lambda\in\Sigma_2^{q+t,q}}2^{r_\lambda}+4\sum_{\lambda\in\Sigma_3^{q+t,q}}2^{r_\lambda}\\
&&|\Char_{\widetilde K}(\Lg_1)|=\sum_{\lambda\in\Sigma_1^{q+t,q}}2^{r_\lambda+1}+\sum_{\lambda\in\Sigma_2^{q+t,q}}2^{r_\lambda}+2\sum_{\lambda\in\Sigma_3^{q+t,q}}2^{r_\lambda}.
\eern
It follows that
\beqn 
|\Char_{\bar K}(\Lg_1)|=\frac{1}{2}|\Char_{\widetilde K}(\Lg_1)|+\frac{3}{2}\sum_{\lambda\in\Sigma_2^{q+t,q}\cup\Sigma_3^{q+t,q}}2^{r_\lambda}+\frac{3}{2}\mathbf{p}\left(\frac{N}{4}\right)\delta_{t,0}.
\eeqn
Hence~\eqref{numbertrivial} of the proposition follows from Lemma~\ref{lemma-n1} and the following equation from~\cite[Corollary 8.6]{VX} 
\beqn
|\Char_{\widetilde K}(\Lg_1)|=\text{ coefficient of $x^q$ in }\frac{1}{1+x^{t}}\prod_{s\geq 1}\frac{1+x^s}{(1-x^s)^3}+\frac{3\delta_{t,0}}{2}\prod_{s\geq 1}\frac{1}{1-x^{2s}}.
\eeqn

To prove~\eqref{numberntrivial}, following~\cite[\S 14]{L1}, we write $P_{N,t}$ for the set of partitions of $N$ into distinct odd parts (and no even parts) such that the number of parts of the form $4m+1$ minus the number of parts of the form $4m+3$ equals $t$. Then we have
$
|P_{N,t}|=\mathbf{p}\left(\frac{N-(2t^2-t)}{4}\right).
$
 Let $\lambda$ be a signed Young diagram such that $p_i\leq 1,q_i\leq 1$ for all odd $\lambda_i$. Recall the sets $J_1,J_2$ defined in~\eqref{set 1} and~\eqref{set 2}. Then $\lambda_i$, $i\in J_1$ form a partition in $P_{N_1,t_1}$ for some $N_1\leq N,t_1\in\bZ$ and $\lambda_i$, $i\in J_2$ form a partition in $P_{N_2,t_2}$ for some $N_2\leq N,t_2\in\bZ$. Moreover, $t_1-t_2=t$. Thus it follows from Lemma~\ref{lemma-component gp} that
\bern
|\cA_K(\Lg_1)_{\kappa_1}|&=&\eta_{\frac{N-t^2}{2},t}\sum_{\substack{N_1+N_2\leq N\\t_1\in\bZ,t_2\in\bZ\\t_1-t_2=t}}\mathbf{p}\left(\frac{N_1-(2t_1^2-t_1)}{4}\right)\mathbf{p}\left(\frac{N_2-(2t_2^2-t_2)}{4}\right)\mathbf{p}\left(\frac{N-N_1-N_2}{4}\right)\\
&=&\text{coefficient of $x^{N-t^2}$ in }\eta_{\frac{N-t^2}{2},t}\prod_{s\geq 1}\frac{1}{(1-x^{4s})(1-x^{2s})}\,.
\eern
Here we have used the Jacobi triple product formula to deduce  that
$
\sum_{t_1-t_2=t}x^{2t_1^2-t_1+2t_2^2-t_2}=\sum_{t_2}x^{(2t_2+t)^2-(2t_2+t)+t^2}=x^{t^2}\prod_{s\geq 1}(1-x^{4s})(1+x^{2s}).
$
\end{proof}

Let
\beq\label{number-t}
T_{p,q}^0=|\on{Char}_{ K^{p,q}}(\Lg_1)_{\kappa_0}|\,\text{ and }  T_N^0=\sum_{p+q=N} T_{p,q}^0.
\eeq
\begin{corollary}\label{numbert} We have
\bern
&&\sum_{N=0}^\infty T_N^0x^N=\frac{1}{4}\prod_{s\geq 1}\frac{(1+x^{2s-1})^2}{(1-x^{4s})(1-x^{2s-1})^2}+\frac{3}{2}\prod_{s\geq 1}\frac{1+x^{2s-1}}{(1-x^{4s})(1-x^{2s-1})}+\frac{9}{4}\prod_{s\geq 1}\frac{1}{1-x^{4s}}\\
&&\sum_{n=0}^\infty T_{2n+1}^0x^{n}=\prod_{s\geq 1}\frac{(1+x^{2s})^4(1+x^{s})^3}{(1-x^{s})}+3\prod_{s\geq 1}\frac{(1+x^{4s})^2(1+x^{2s})(1+x^{s})}{(1-x^{s})}\\
&&\sum_{n=0}^\infty  T_{2n}^0x^{n}=\frac{1}{4}\prod_{s\geq 1}\frac{(1+x^{2s-1})^4(1+x^{s})^3}{(1-x^{s})}+\frac{3}{2}\prod_{s\geq 1}\frac{(1+x^{4s-2})^2(1+x^{2s})(1+x^{s})}{(1-x^{s})}\\&&\hspace{1in}+\frac{9}{4}\prod_{s\geq 1}\frac{1}{1-x^{2s}}.
\eern
\end{corollary}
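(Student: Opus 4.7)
The proof strategy is to convert $T_N^0 = \sum_{p+q=N}T_{p,q}^0$ into a generating function using Proposition~\ref{number1}, and then to evaluate the resulting Lambert-type series via classical theta function identities followed by the Jacobi triple product. Since the symmetric pairs $(G, K^{p,q})$ and $(G, K^{q,p})$ are isomorphic, $T_{p,q}^0 = T_{q,p}^0$, and one may sum the formula of Proposition~\ref{number1} over $t = |p-q| \geq 0$ (doubling the contribution when $t \geq 1$). Writing $p=(N+t)/2$, $q=(N-t)/2$ and exchanging summation order gives
\[
\sum_{N \geq 0} T_N^0 x^N = a(x^2)\bigl(\tfrac{1}{4}+S_1(x)\bigr) + 3b(x^2)\bigl(\tfrac{1}{2}+S_2(x)\bigr) + \tfrac{9}{4}c(x^2),
\]
where $a(y), b(y), c(y)$ are the three infinite products appearing in Proposition~\ref{number1} and
\[
S_1(x)=\sum_{t\geq 1}\frac{x^t}{1+x^{2t}},\qquad S_2(x)=\sum_{t\geq 1}\frac{x^t(1+x^{2t})}{1+x^{4t}}.
\]
Restricting the $t$-summation to even $t$ (respectively odd $t=2k+1$) yields $\sum_n T_{2n}^0 y^n = a(y)(\tfrac14+S_1(y))+3b(y)(\tfrac12+S_2(y))+\tfrac94 c(y)$, and $\sum_n T_{2n+1}^0 y^n = a(y)\widetilde S_1(y) + 3b(y)\widetilde S_2(y)$, with $\widetilde S_i$ the analogous sums indexed over $k \geq 0$.

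Next, expanding each Lambert series as a double sum via the geometric series $1/(1+x^{2t}) = \sum_j(-1)^j x^{2jt}$ and switching order of summation, the coefficient of $x^N$ in $S_1$ (respectively $S_2$) is identified with a divisor sum of $N$ weighted by the Kronecker character $\chi_{-4}$ (respectively $\chi_{-8} = (\tfrac{-2}{\cdot})$). By Jacobi's two-squares formula and the corresponding identity for the principal binary form $m^2+2n^2$ of discriminant $-8$, this yields $1+4S_1(x) = \theta_3(x)^2$ and $1+2S_2(x) = \theta_3(x)\theta_3(x^2)$, where $\theta_3(x)=\sum_{k \in \mathbb{Z}} x^{k^2}$. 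For the parity-restricted sums the coefficient at $y^N$ becomes a divisor sum over $2N+1$; combining this with the classical identities $\theta_3(q)^2-\theta_4(q)^2=2\theta_2(q^2)^2$ and $\theta_3(q)-\theta_4(q)=2\theta_2(q^4)$ gives the closed forms $\widetilde S_1(y) = \theta_2(y)^2/(4\sqrt y)$ and $\widetilde S_2(y) = \theta_2(y^2)\theta_3(y)/(2\sqrt y)$.

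Finally, substitute the Jacobi triple products $\theta_3(x) = \prod_n(1-x^{2n})(1+x^{2n-1})^2$ and $\theta_2(x)=2x^{1/4}\prod_n(1-x^{2n})(1+x^{2n})^2$ into the above expressions and combine with $a, b, c$; each resulting infinite product simplifies via standard identities such as $(1-x^{2n})=(1-x^n)(1+x^n)$, $(1-x^{4n})=(1-x^{2n})(1+x^{2n})$, and the parity-splitting $\prod_s(1+x^s)^2 = \prod_s(1+x^{2s})^2(1+x^{2s-1})^2$, matching the right-hand side of one of the three generating function identities in the corollary. The $\tfrac94 c$ terms arising from the $\delta_{t,0}$ contributions directly produce the $\prod 1/(1-x^{4s})$ and $\prod 1/(1-x^{2s})$ summands. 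The main obstacle is this last simplification step: while the theta function identities of the previous paragraph are classical, matching the precise product shapes on the right-hand sides requires careful bookkeeping of index ranges in the various $q$-products and systematic application of the pairing $\prod_s(1+x^s) = \prod_s(1+x^{2s})(1+x^{2s-1})$.
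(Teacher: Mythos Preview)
Your approach is correct and essentially mirrors the paper's proof: both sum Proposition~\ref{number1} over $t=p-q$ and evaluate the resulting Lambert series $\sum_t x^t/(1+x^{2t})$ and $\sum_t x^t(1+x^{2t})/(1+x^{4t})$ as infinite products, then separate the odd-$t$ and even-$t$ contributions. The only difference is the tool used for these evaluations: the paper invokes Ramanujan's ${}_1\psi_1$ summation (and its specialisations) directly, whereas you pass through divisor sums and the classical representation-number identities $1+4S_1=\theta_3(x)^2$, $1+2S_2=\theta_3(x)\theta_3(x^2)$ together with the Landen-type relations among $\theta_2,\theta_3,\theta_4$; after the Jacobi triple product both routes land on the same infinite products, so the final product bookkeeping you flag as an obstacle is the same in either approach.
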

\begin{proof}
Using the Ramanujan's ${}_1\psi_1$ formula as in~\cite[Appendix B]{CVX}, we see that
\bern
&&\sum_{k=-\infty}^\infty\frac{x^k}{1+x^{2k}}=\frac{1}{2}\prod_{s\geq 1}\frac{(1+x^{2s-1})^2(1-x^{2s})^2}{(1-x^{2s-1})^2(1+x^{2s})^2}
\\
&&\sum_{k=-\infty}^\infty\frac{x^k(1+x^{2k})}{1+x^{4k}}={}_1\psi_1(-1,-x^4;x^4;x)=\prod_{s\geq 1}\frac{(1+x^{2s-1})(1-x^{4s})^2}{(1-x^{2s-1})(1+x^{4s})^2}.
\eern
Moreover, it follows from
\bern
&&\sum_{k=-\infty,\,k\text{ odd}}^\infty\frac{x^k(1+x^{2k})}{1+x^{4k}}=\frac{2x^3}{1+x^4}{}_1\psi_1(-x^{-4},-x^4;x^8;x^2)=2x\prod_{s\geq 1}\frac{(1+x^{4s-2})(1-x^{8s})^2}{(1-x^{4s-2})(1+x^{8s-4})^2}\\&&\sum_{k=-\infty,\,k\text{ even}}^\infty\frac{x^k(1+x^{2k})}{1+x^{4k}}={}_1\psi_1(-1,-x^8;x^8;x^2)=\prod_{s\geq 1}\frac{(1+x^{4s-2})(1-x^{8s})^2}{(1-x^{4s-2})(1+x^{8s})^2}
\eern
that
\bern
\prod_{s\geq 1}\frac{1+x^{2s-1}}{1-x^{2s-1}}&=&2x\prod_{s\geq 1}(1+x^{8s})^2(1+x^{4s})(1+x^{2s})^2+\prod_{s\geq 1}(1+x^{8s-4})^2(1+x^{4s})(1+x^{2s})^2.
\eern
Recall that (~\cite[Appendix C.3]{VX})
\beq\label{eqn-oeterms}
\prod_{s\geq 1}\frac{(1+x^{2s-1})^2}{(1-x^{2s-1})^2}=4x\prod_{s\geq 1}(1+x^{4s})^4(1+x^{2s})^4+\prod_{s\geq 1}(1+x^{4s-2})^4(1+x^{2s})^4.
\eeq
The corollary follows from Proposition~\ref{number1}.
\end{proof}

\subsection{Comparing the number of sheaves} \label{ssec-number}The number of sheaves in  Theorem~\ref{thm-type BD} (ii) equals the coefficient of $x^{{N-t^2}}$ in
\beqn
\eta_{\frac{N-t^2}{2},t}\prod_{s\geq 1}\frac{1}{(1-x^{4s})^2}\prod_{s\geq 1}(1+x^{2s})=\eta_{\frac{N-t^2}{2},t}\prod_{s\geq 1}\frac{1}{(1-x^{4s})(1-x^{2s})}\stackrel{\eqref{numberntrivial}}=|\Char_K(\Lg_1)_{\kappa_1}|
\eeqn
since $\eta_{\frac{N-t^2}{2},t}=\eta_{\frac{N-4k-t^2}{2},t}$ (see~\eqref{eqn-mpq}). Thus part (ii) of Theorem~\ref{thm-type BD} follows.

Let us write $T_{p,q}'$ for the number of sheaves in Theorem~\ref{thm-type BD} (i) and let $T_N'=\sum_{p+q=N}T'_{p,q}$. It suffices to show that $T_N'=T_N^0$ (see~\eqref{number-t}). Since $0\leq T'_{N}\leq T_{N}^0$ by definition, it will then follow that $T_{p,q}'= T^0_{p,q}.$

Let $f_{m,t}^i=|\Theta_{m,t}^{\kappa_0,i}|$, $i=1,2$, $t=0,1$, and $f_{m,0}=|\Theta_{m,0}^{\kappa_0}|$, see~\eqref{Theta0},~\eqref{Theta1} and~\eqref{Theta2}. We have
\ber
\label{T1}&&T'_{p,q}=\sum_{k}\sum_{m\geq1}\sum_{\mu\in\Sigma^{p-2k-m,q-2k-m}_{b,1}}\mathbf{p}(k)f_{m,1}^12^{l_\mu-1}+\sum_{k}\sum_{m\geq 1}\sum_{\mu\in\Sigma^{p-2k-m,q-2k-m}_{b,2}}\mathbf{p}(k)f_{m,1}^22^{l_\mu}\\
&&\qquad+\sum_{k}\sum_{\mu\in\Sigma^{p-2k,q-2k}_{b,1}}\mathbf{p}(k)2^{l_\mu-1}+2\sum_{k}\sum_{\mu\in\Sigma^{p-2k,q-2k}_{b,2}}\mathbf{p}(k)2^{l_\mu}\qquad \text{if $p+q$ is odd}\nonumber,
\\
&&T_{p,q}'=\sum_{k}\sum_{m\geq1}\sum_{\mu\in\Sigma^{p-2k-m,q-2k-m}_{b,1}}\mathbf{p}(k)f_{m,0}^12^{l_\mu-2}+\sum_{k}\sum_{m\geq 1}\sum_{\mu\in\Sigma^{p-2k-m,q-2k-m}_{b,2}}\mathbf{p}(k)f_{m,0}^22^{l_\mu-1}\nonumber\\
&&\qquad+\sum_{k}\sum_{\mu\in\Sigma^{p-2k,q-2k}_{b,1}}\mathbf{p}(k)2^{l_\mu-2}+2\sum_{k}\sum_{\mu\in\Sigma^{p-2k,q-2k}_{b,2}}\mathbf{p}(k)2^{l_\mu-1}\nonumber\\
&&\qquad+\delta_{p,q}\sum_k\sum_{m\geq 1}\mathbf{p}(k)f_{m,0}+4\delta_{p,q}\mathbf{p}(n/2)\text{\hspace{1.5in} if $p+q$ is even}.\nonumber
\eer
Let
\beq\label{tbpq}
\begin{gathered}
{b}_{p,q}^i=\sum_{\mu\in\Sigma_{b,i}^{p,q}}|\Pi_{\cO_\mu}|,\,b^i_N=\sum_{p+q=N}b_{p,q}^i,\,\,i=1,2,\ 
\tilde{b}_{p,q}=2b_{p,q}^1+b_{p,q}^2, \tilde b_{N}=\sum_{p+q=N}\tilde{b}_{p,q}.
\end{gathered}
\eeq
Then $
\tilde{b}_{p,q}=|\on{Char}^\rn_{\widetilde K^{p,q}}(\Lg_1)|,\ \tilde b_{2n+\varepsilon}=\sum_{p+q=2n+\varepsilon}\sum_{\mu\in\Sigma_{b}^{p,q}}2^{l_\mu-1+\varepsilon}
$, $\varepsilon=0,1$. Moreover, by~\cite{VX} we have
\ber
\label{bb-odd}&&\sum_{n=0}^\infty \tilde b_{2n+1}x^{n}=2\prod_{s\geq 1}{(1+x^{s})^2}{(1+x^{2s})^2},\ 
\sum_{n=0}^\infty \tilde b_{2n}x^{n}=\frac{1}{2}\prod_{s\geq 1}{(1+x^{s})^2}{(1+x^{2s-1})^2}.
\eer
\begin{lemma}
We have
\ber
\label{eqn-bio}&&\sum_{p=0}^{\infty}b_{2n+1}^2x^{n}=2\prod_{s\geq 1}(1+x^{s})^2(1+x^{4s}),\,\,\sum_{p=0}^{\infty}b_{2n}^2x^{n}=\prod_{s\geq 1}(1+x^{s})^2(1+x^{4s-2}).
\eer
\end{lemma}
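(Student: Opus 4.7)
I would prove the lemma by unpacking the combinatorics of $\Sigma_{b,2}$ and translating the sum $\sum_\mu |\Pi_{\cO_\mu}|$ into a generating-function identity, paralleling the $q$-series manipulation used in Lemma~\ref{lemma-n1}.

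First, I would use the defining constraint on $\Sigma_{b,2}$, namely $a_\lambda b_\lambda = 0$ with $a_\lambda + b_\lambda > 0$, which forces either $J_\mu^1 = \emptyset$ or $J_\mu^2 = \emptyset$, i.e.\ all the quantities $\mu_i + \epsilon_i$ have a common parity. Since all part lengths $2\mu_i+1$ are odd, the signs $\epsilon_i$ are then determined by the residues of the lengths modulo $4$, up to a global flip. Thus an element $\mu \in \Sigma_{b,2}^{p,q}$ is the same data as an ordinary partition $\lambda$ of $N = p+q$ into odd parts, together with one of two sign cases (Case 1: parts $\equiv 1 \pmod 4$ get sign $+$, parts $\equiv 3 \pmod 4$ get sign $-$; Case 2: the opposite). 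Because $l_\mu$ depends only on the distinct sizes and multiplicities of $\lambda$ (not on the sign case), the sign choice factors out cleanly.

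Second, using $|\Pi_{\cO_\mu}| = 2^{l_\mu}$ (odd $N$) or $2^{l_\mu-1}$ (even $N$) from~\eqref{set of chars}, I would expand $2^{l_\mu} = \sum_{S \subseteq \Omega_\mu} 1$, reformulating the sum $\sum_\mu |\Pi_{\cO_\mu}|$ as an enumeration of triples (partition $\lambda$, sign case, subset $S \subseteq \Omega_\mu$). To get a product formula I then analyze $\Omega_\mu$ structurally: condition (2) in the definition of $\Omega_{\cO_\mu}$ fails at index $j \ge 2$ exactly when the consecutive distinct sizes $\lambda_{j-1}, \lambda_j$ differ by $2$ and lie in different residue classes modulo $4$. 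This partitions the distinct part sizes into \emph{runs}, namely maximal consecutive blocks of odd integers with common difference $-2$. Combined with condition (1), $\Omega_\mu$ consists precisely of starting positions of runs for which the total multiplicity of parts from that run onward is even.

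Third, I would assemble the generating function as a product over runs: each run corresponds to a choice of arithmetic progression of odd integers, multiplicities, and an internal bookkeeping of tail-parity and the $S$-marking. Independence of runs yields a multiplicative structure, and the local factors can be computed in closed form. Setting up the two-variable generating function $\sum b_{p,q}^2 u^p v^q$ (as in Lemma~\ref{lemma-n1}), applying Ramanujan's ${}_1\psi_1$ formula together with the factorization $\prod_{s\ge1}(1+x^{2s}) = \prod_{s\ge1}(1+x^{4s-2})(1+x^{4s})$, and specializing $u=v=x$ should collapse the product to $2\prod_{s\ge1}(1+x^s)^2(1+x^{4s})$ in the odd-$N$ case and $\prod_{s\ge1}(1+x^s)^2(1+x^{4s-2})$ in the even-$N$ case, the factor of $2$ in the odd case arising from the two sign-case choices.

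The main obstacle is this third step: carefully tracking, inside the generating function, the interaction among the tail-parity condition on run multiplicities, the subset $S \subseteq \Omega_\mu$, and the constraint (a)/(b) that distinguishes odd from even $N$, and extracting a clean multiplicative structure that matches the target products. This is the analog of the non-trivial Ramanujan-type manipulation carried out in the proof of Lemma~\ref{lemma-n1}.
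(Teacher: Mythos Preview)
Your setup is correct: the reduction of $\Sigma_{b,2}$ to partitions into odd parts together with a binary sign choice, and the identification of when condition (2) in $\Omega_\mu$ fails (namely when consecutive distinct part sizes differ by exactly $2$---your clause ``lie in different residue classes modulo $4$'' is automatic here and can be dropped), are both right. However, the proposal remains a plan rather than a proof: you explicitly flag the third step as an unresolved obstacle, and your intended use of a two-variable generating function with Ramanujan's ${}_1\psi_1$ is more machinery than necessary. Since you have already observed that $l_\mu$ depends only on the underlying partition and not on the sign case, a single-variable generating function suffices from the outset, and the factor of $2$ from the sign choice can simply be pulled out.

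The paper's route is different and more direct. After rewriting $b_N^2=\sum_{\lambda\in\cP^{odd}(N)} wt_\lambda$ for an explicit weight $wt_\lambda$ (a power of $2$ counting gaps $\mu_{2j-1}\ge\mu_{2j}+2$ in the \emph{expanded} form, which absorbs both your $2^{l_\mu}$ and the sign factor), the paper passes to the \emph{conjugate} partition: for a partition into $2k+1$ odd parts the columns have sizes $1,2,\ldots,2k+1$, each with a parity constraint on its multiplicity, and the weight localises to particular column sizes. Summing over $k$ produces a single ${}_2\phi_1$-series (with parameters $\pm\mathbf{i}x^2$) which is evaluated by the $q$-Gauss theorem, not ${}_1\psi_1$. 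Your run-decomposition could in principle be pushed through, but the column picture is what immediately yields a basic hypergeometric sum in closed form; if you pursue your approach, you should expect the ``runs'' bookkeeping to eventually collapse to the same ${}_2\phi_1$.
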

\begin{proof}We follow the proof in~\cite[Appendix B]{CVX}. Let $\cP^{odd}(N)$ denote the set of partitions of $N$ into odd parts. We write a partition $\lambda\in\cP^{odd}(N)$ as
$\lambda=(2\mu_1+1)+(2\mu_2+1)+\cdots+(2\mu_{s}+1)
$
where $\mu_1\geq\mu_2\geq\cdots\geq\mu_{s}\geq 0$. Note that $s\equiv N\mod 2$. We set
$
wt_\lambda=2^{\#\{1\leq j\leq  (s-1)/2\,|\,\mu_{2j-1}\geq\mu_{2j}+2\}}\text{ when $N$ is odd};
$
$
wt_\lambda=2^{\#\{1\leq j\leq s/2-1\,|\,\mu_{2j}\geq\mu_{2j+1}+2\}}\text{ when $N$ is even}.
$
We have
\beqn
b_{N}^2=\sum_{\lambda\in\cP^{odd}(N)}wt_\lambda.
\eeqn
The first equation is equivalent to
$
\sum_{n=0}^\infty b_{2n+1}^2 x^{2n+1}=x (-x^8;x^8)_\infty (-x^2;x^2)_\infty^2.
$
This can be seen as follows. Suppose $\lambda$ with odd parts has an odd number of parts, say $2k+1$. Consider the columns of $\lambda$, which have possible sizes $1,2,\cdots, 2k+1.$ The part $2k+1$ occurs an odd number of times, the generating function is
$
\displaystyle{\frac{x^{2k+1}}{1-x^{4k+2}}}.
$
 The part $2k$ occurs an even number of times, the generating function is
$
\displaystyle{\frac{1}{1-x^{4k}}}.
$
 The part $2k-1$ occurs an even number of times, the weighted generating function is
$
\displaystyle{1+x^{4k-2}+\frac{2x^{8k-4}}{1-x^{4k-2}}}$$=\displaystyle{\frac{1+x^{8k-4}}{1-x^{4k-2}}}.
$
This continues down to part size 1, to obtain the generating function
\bern
\sum_{n=0}^\infty b_{2n+1}^2 x^{2n+1} &=&
\sum_{k=0}^\infty \frac{x^{2k+1}}{1-x^{4k+2}}
\frac{1}{\prod_{j=1}^k(1-x^{4j})}
\prod_{i=1,{\text{odd}}}^{2k-1}\frac{1+x^{4i}}{1-x^{2i}}\\&=&\frac{x}{1-x^2}\sum_{k=0}^{\infty}\frac{(-\mathbf{i}x^2;x^4)_k(\mathbf{i}x^2;x^4)_k}{(x^6;x^4)_k(x^4;x^4)_k}x^{2k}=\frac{x}{1-x^2} \frac{(\mathbf{i}x^4;x^4)_\infty (-\mathbf{i}x^4;x^4)_\infty}{(x^6;x^4)_\infty (x^2;x^4)_\infty}\\&=&x (-x^8;x^8)_\infty (-x^2;x^2)_\infty^2,
\eern
where in the second last equality we have applied the $q$-Gauss theorem~\cite{CVX} with $q\rightarrow x^4$ and $a=-\mathbf{i}x^2,\,b=\mathbf{i}x^2,\,c=x^6$. The second equation is equivalent to
$
\sum_{n=0}^\infty b_{2n}^2 x^{2n}=\frac{1}{2}(-x^4;x^8)_\infty (-x^2;x^2)^2_\infty.
$
Assume $\lambda$ is a partition into odd parts with $2k$ parts. We argue as 
above, this time the even parts have weights.
Thus
\begin{eqnarray*}
\sum_{n=0}^\infty b_{2n}^2 x^{2n} &=&
\sum_{k=0}^\infty \frac{x^{2k}}{1-x^{4k}}
\frac{1}{\prod_{j=1}^k(1-x^{4j-2})}
\prod_{i=1,{\text{even}}}^{2k-2} \frac{1+x^{4i}}{1-x^{2i}}=
\frac{1}{2}\sum_{k=0}^\infty\frac{(-\mathbf{i};x^4)_k(\mathbf{i};x^4)_k}{(x^2;x^4)_k(x^4;x^4)_k}x^{2k}
\\
&=&\frac{1}{2} \frac{(-\mathbf{i}x^2;x^4)_\infty (\mathbf{i}x^2;x^4)_\infty}
{(x^2;x^4)_\infty (x^2;x^4)_\infty}= \frac{1}{2}(-x^4;x^8)_\infty (-x^2;x^2)^2_\infty.
\end{eqnarray*}

\end{proof}

By~\eqref{eqn-hecke1} and~\eqref{eqn-heckeD} we have
\ber
\label{fn1B}&&\sum_{m}f_{m,1}^1x^{m}=\prod_{s\geq 1}(1+x^{2s})^2(1+x^{s})^2,\,\sum_mf_{m,0}^1x^m=\prod_{s\geq 1}(1+x^{2s-1})^2(1+x^{s})^2\nonumber\\
\label{fn2B}&&\sum_{m}f_{m,1}^2x^{m}=\frac{1}{2}\prod_{s\geq 1}(1+x^{2s})^2(1+x^{s})^2+\frac{3}{2}\prod_{s\geq 1}(1+x^{4s})(1+x^{2s})
\\
\label{fn1D}&&\sum_mf_{m,0}x^m=\frac{1}{4}\prod_{s\geq 1}(1+x^{2s-1})^2(1+x^{s})^2+\frac{3}{2}\prod_{s\geq 1}(1+x^{4s-2})(1+x^{2s})\\
\label{fn3D}&&\sum_mf_{m,0}^2x^m=\frac{1}{2}\prod_{s\geq 1}(1+x^{2s-1})^2(1+x^{s})^2+\frac{3}{2}\prod_{s\geq 1}(1+x^{4s-2})(1+x^{2s}).\nonumber
\eer
Let us write $\displaystyle{F(x)=\prod_{s\geq 1}\frac{1}{1-x^{2s}}=\sum\mathbf{p}(k)x^{2k}}$. It follows from~\eqref{T1},~\eqref{tbpq},~\eqref{bb-odd},~\eqref{eqn-bio} and the above equations that
\bern
&&\sum_{n\geq 0}T'_{2n+1}x^{n}=\left((\sum_{m}f_{m,1}^1x^{m})(\frac{1}{2}\sum_n\tilde{b}_{2n+1}x^n)+\frac{3}{2}\prod_{s\geq 1}(1+x^{4s})(1+x^{2s})(\sum_n{b}_{2n+1}^2x^n)\right)F(x)\\
&&\quad=\prod_{s\geq 1}\frac{(1+x^{2s})^4(1+x^{s})^4}{1-x^{2s}}+{3}\prod_{s\geq 1}\frac{(1+x^{4s})^2(1+x^{2s})(1+x^{s})^2}{1-x^{2s}}
\\
&&\sum_{n\geq 0}T'_{2n}x^{2n}=\left((\sum_{m}f_{m,0}^1x^{m})(\frac{1}{2}\sum_n\tilde{b}_{2n}x^n)+\frac{3}{2}\prod_{s\geq 1}(1+x^{4s-2})(1+x^{2s})(\sum_n{b}_{2n}^2x^n)+\frac{9}{4}\right)F(x)\\&&
\quad=\frac{1}{4}\prod_{s\geq 1}\frac{(1+x^{2s-1})^4(1+x^{s})^4}{1-x^{2s}}+\frac{3}{2}\prod_{s\geq 1}\frac{(1+x^{4s-2})^2(1+x^{2s})(1+x^{s})^2}{1-x^{2s}}+\frac{9}{4}\prod_{s\geq 1}\frac{1}{1-x^{2s}}\nonumber.\eern
 We conclude that $T'_{N}=T_{N}^0$ in view of Corollary~\ref{numbert}. This completes the proof of  the theorem.

\subsection{Proofs of Corollary~\ref{coro-cuspidal} and Corollary~\ref{nil coro-2}}

\begin{proof}[Proof of Corollary~\ref{coro-cuspidal}]In view of~\cite[Corollary 6.7]{VX2} and its proof, it suffices to consider the sheaves in $\Char_K(\Lg_1)_{\kappa_1}$. Suppose that $(G,K)$ is of type DIII and $n\geq 4$. We show that all character sheaves can be obtained from parabolic induction.  Suppose that $n=2n_0$. Let $\{e_i,i=1,\ldots,n\}$, $\{f_i,i=1,\ldots,n\}$ be a basis of $V^+$, $V^-$ respectively, such that $(e_i,f_j)=\delta_{i+j,n+1}$. Consider the $\theta$-stable parabolic subgroup $P$ such that $\pi(P)$ stabilises the flag $0\subset V_n:=\on{span}\{e_i,f_i,i\in[1,n_0]\}\subset V$. Let $L\subset P$ be the $\theta$-stable  Levi subgroup such that $\pi(L)\cong GL_{V_n}\cong GL_{2n_0}$. We have $\pi(L^\theta)\cong GL_{V_n\cap V^+}\times GL_{V_n\cap V^-}\cong GL_{n_0}\times GL_{n_0}$. Moreover,
$
\pi_1^{L^\theta}(\Ll_1)\cong B_{W_{n_0}}\times\bZ/2\bZ.
$
For each $\sigma\in\cP_2(n_0)$, consider the IC sheaf $\on{IC}(\Ll_1^{rs},\cT_{\sigma,\chi_1})$ where $\cT_{\sigma,\chi_1}$ corresponds to the $\pi_1^{L^\theta}(\Ll_1)$-representation $L_\sigma\otimes\chi_1$  where $B_{W_{n_0}} $ acts via the irreducible representation $L_\sigma$ of $W_{n_0}$ and $\bZ/2\bZ$ acts via the nontrivial character $\chi_1$. This is a character sheave in $\Char_{L^\theta}(\Ll_1)$ by~\cite[Theorem 5.1]{VX2}. As before one checks that $\on{IC}(\Lg_1^{rs},\cL_\sigma\otimes\bC_{\chi_1})\ \dsum\ \on{Ind}_{\Ll_1\subset\Lp_1}^{\Lg_1}\on{IC}(\Ll_1^{rs},\cT_{\sigma,\chi_1})$. Suppose that $(G, K)$ is of BDI. By the proof of Theorem~\ref{thm-type BD}, $\IC(\cO_{\mu_t},\cE_\phi)$, $\phi\in\widehat{A_K(\cO_\mu)}$ is a cuspidal character sheaf and  $\Char_K^{\on{cusp}}(\Lg_1)_{\kappa_1}$ is a subset of the set in Theorem~\ref{theorem-cusp} (ii). Thus the only possible $\theta$-stable Levi subgroups $L$ contained in  proper $\theta$-stable parabolic subgroups $P$ with $\on{\Char}_{L^\theta}^{cusp}(\Ll_1)_{\kappa_1}\neq\emptyset$ are such that $\pi(L)\cong GL_2^k\times Spin(N-4k)$, $k>0$, $N-4k\geq t^2$. But for such $L$ we have $K.\Lp_1\subsetneq\overline{\widecheck\cO_{m,t}}$. Theorem~\ref{theorem-cusp} then follows.

It remains to check the number of cuspidal character sheaves. The claims on $|\Char_K^{\on{cusp}}(\Lg_1)_{\kappa_0}|$ follow from~\eqref{fn2B},~\eqref{fn1D} and~\eqref{eqn-oeterms}. The claim on $|\Char_K^{\on{cusp}}(\Lg_1)_{\kappa_1}|$ follows from the definitions of $\Theta_{m,t}^{\kappa_1}$, $\eta_{m,t}$ and the fact that $\sum_n|\on{Irr}\cH_{S_n,-1}|x^n=\prod_{s\geq 1}(1+x^s)$.
\end{proof}

\begin{proof}[Proof of Corollary~\ref{nil coro-2}] The fact that the sheaves in the corollary are precisely the nilpotent support character sheaves follows from Theorem~\ref{thm-type BD}. Recall that  (see~\cite{VX})
\begin{subequations}
 \beq\label{tb1}
 \sum_{q=0}^\infty\tilde{b}_{q+2k+1,q}x^q=
\frac{1}{1+x^{2k+1}}\prod_{s\geq 1}\frac{(1+x^{2s-1})^2}{(1-x^{2s})^2},
\eeq\beq 
\sum_{q=0}^\infty\tilde{b}_{2q+2k,2q}x^{2q}=\frac{1}{1+x^{2k}}\prod_{s\geq 1}\frac{(1+x^{2s})^2}{(1-x^{2s})^2}.
\eeq
\end{subequations}
Applying  Proposition~\ref{number1} and entirely similar argument as in~\S\ref{ssec-number} (using~\eqref{T1} and~\eqref{tb1}), we obtain that 
\beq
{b}_{q+t,q}^2=\text{ Coefficient of $x^q$ in }\left\{\begin{array}{ll}\displaystyle{
\frac{(1+x^{t})}{(1+x^{2t})}\prod_{s\geq 1}\frac{1+x^{4s-2}}{(1-x^{2s})^2}}&\text{ if $t$ is odd}
\\
\displaystyle{\frac{(1+x^{t})}{(1+x^{2t})}\prod_{s\geq 1}\frac{1+x^{4s}}{(1-x^{2s})^2}}&\text{ if $t$, $q$ both even}.\end{array}\right.
\eeq
The claims on $|\Char_K^{\rn}(\Lg_1)_{\kappa_0}|$ then follow from~\eqref{set of chars}, ~\eqref{tbpq} and~\eqref{tb1}. 

The claim on $|\Char_K^{\rn}(\Lg_1)_{\kappa_1}|$ follows from Lemma~\ref{lemma-component gp} and the definition of $\eta_{0,t}$.
\end{proof}

\end{document}